\def\BState{\State\hskip-\ALG@thistlm}
\numberwithin{equation}{section}
\newtheorem{theorem}{Theorem}[section]
\newtheorem{cor}[theorem]{Corollary}
\newtheorem{lem}[theorem]{Lemma}
\newtheorem{prop}[theorem]{Proposition}
\newtheorem{rem}[theorem]{Remark}
\newtheorem{ex}[theorem]{Example}
\newtheorem{assume}[theorem]{Assumption}
\DeclarePairedDelimiter\ceil{\lceil}{\rceil}
\DeclarePairedDelimiter\floor{\lfloor}{\rfloor}
\newcommand{\ds}{\displaystyle}
\title{On Squared Distance Matrix of Complete Multipartite Graphs}
\author{Joyentanuj Das\footnote{Department of Applied Mathematics,
National Sun Yat-sen University, Gushan District, Kaohsiung City, 804, Taiwan (ROC) \indent  Email: joyentanuj@gmail.com,  joyentanuj@math.nsysu.edu.tw}  \quad and \quad Sumit Mohanty\footnote{Humanities and Applied Sciences, IIM Ranchi,  Suchana Bhawan, Audrey House Campus, Meur's Road, Ranchi, Jharkhand-834008, India. \indent   Email:  sumitmath@gmail.com, sumit.mohanty@iimranchi.ac.in}}
\date{}
\begin{document}

\maketitle

\begin{abstract}
 Let $G = K_{n_1,n_2,\cdots,n_t}$ be a complete $t$-partite graph  on $n=\sum_{i=1}^t n_i$ vertices. The distance between vertices $i$ and $j$ in $G$, denoted by  $d_{ij}$   is defined to be  the length of the shortest path between $i$ and $j$. The squared distance matrix $\Delta(G)$ of $G$ is the $n\times n$ matrix with $(i,j)^{th}$ entry equal to $0$ if $i = j$ and equal to $d_{ij}^2$ if $i \neq j$.    We define the squared distance energy $E_{\Delta}(G)$ of $G$ to be the sum of the absolute values of its eigenvalues. We determine the inertia of $\Delta(G)$ and compute the squared distance energy $E_{\Delta}(G)$. More precisely, we prove that  if $n_i \geq 2$ for $1\leq i \leq t$, then  $ E_{\Delta}(G)=8(n-t)$  and if  $ h= |\{i : n_i=1\}|\geq 1$, then  $$ 8(n-t)+2(h-1) \leq E_{\Delta}(G) < 8(n-t)+2h.$$
Furthermore, we show that for a fixed value of $n$ and $t$, both the spectral radius of the squared distance matrix  and the squared distance energy of  complete $t$-partite graphs on $n$ vertices are maximal for complete split graph $S_{n,t}$ and  minimal for Tur{\'a}n graph $T_{n,t}$.
\end{abstract}

\noindent {\sc\textsl{Keywords}:} Complete $t$-partite graphs, Squared  distance matrix,  Inertia, Energy, Spectral radius.\\

\noindent {\sc\textbf{MSC}:}  05C12, 05C50
\section{Introduction  and Motivation}
A simple connected graph $G$  is a metric space with respect to the metric $d,$ where $d(i,j)$ equals the length of the shortest path between the vertices $i$ and $j$. We set $d(i,i)=0$ for every vertex $i$ in $G$.   The distance matrix of  a graph  $G$ on $n$ vertices  is an  $n \times n$ matrix  $D(G) = [d_{ij}]$, where $d_{ij}= d(i,j)$. The squared distance matrix  $\Delta(G)$ of $G$ is defined to be the Hadamard product $D(G) \circ D(G)$, and thus   the squared distance matrix of $G$  is an  $n \times n$ matrix, defined as $\Delta(G)=[d_{ij}^2].$ Hence, $\Delta(G)$  is a real symmetric  matrix and  the eigenvalues of $\Delta(G)$  are real.  Let $\lambda_1(G)\geq \lambda_2(G)\geq \cdots \geq  \lambda_n(G)$ be the eigenvalues of $\Delta(G)$ and   $\sigma(\Delta(G))=\{\lambda_1(G)\geq \lambda_2(G)\geq \cdots \geq \lambda_n(G) \}$  denote the spectrum of $\Delta(G)$. The spectral radius of $\Delta(G)$, denoted by $\rho(G)$ is defined as $\ds \rho(G)=\max_{\lambda \in \sigma(\Delta(G))}|\lambda|$ and by Perron-Frobenius theory,  the spectral radius $\rho(G)$ is the largest eigenvalue of $\Delta(G)$, {\it i.e.,} $\rho(G)= \lambda_1(G)$. We define the squared distance energy of  $\Delta(G)$  as $E_{\Delta}(G)= \sum_{i=1}^n |\lambda_i(G)|$. Since the trace of $\Delta(G)$ is zero, so
\begin{equation*}
 E_{\Delta}(G)= 2 \sum_{ \lambda_i(G) <0} |\lambda_i(G)|.
\end{equation*} 

Let $T$ be a tree with $n$ vertices. In~\cite{Gr1}, the authors proved that the determinant of the distance matrix $D(T)$ of $T$ is given by $\det D(T)=(-1)^{n-1}(n-1)2^{n-2}.$  Note that, the determinant does not depend on the structure of the tree but the number of vertices. In \cite{Gr2}, it was shown that the inverse of the distance matrix of tree $T$ is given by $D(T)^{-1} = -\dfrac{1}{2}L(T) + \dfrac{1}{2(n-1)}\tau \tau^t,$ where $L(T)$ is Laplacian matrix of $T$ and $\tau = (2-\delta_1,2-\delta_2,\cdots,2-\delta_n)^t,$  $\delta_i$ denotes the degree of the vertex $i$. Several extensions and generalizations of these results has been proved (see, for example see~\cite{Bp3,Gr3,Hou1,Hou2}). In~\cite{Bp4},  authors computed the determinant of the squared distance matrix $\Delta(T)$ of a tree $T$  with no vertex of degree $2$ and later in~\cite{Bp5},  find its inverse as a rank one perturbation of a Laplacian-like matrix. In~\cite{Bp6}, the author considered the squared distance matrix of weighted trees and under certain conditions found its inverse as a rank one perturbation of a Laplacian-like matrix. However, the results for distance matrix and squared distance matrix for trees are similar,  but strategy adopted for the proofs are different due to the non-linearity of the Hadamard product. For the definition of Laplacian-like matrix and other related results see~\cite{JD1, Zhou1}. Moreover, in~\cite{Bp5}, authors also determined the inertia of the squared distance matrix of a tree. These developments encourage us to study problems related to the squared distance matrix on other class of graphs. More precisely, we consider a few problems on the squared distance matrix of  complete multipartite graphs.

Before proceeding further, we first introduce a few notations which will be used time and again throughout this article. We write $G\approxeq H$ to indicate that two graphs $G$ and $H$ are isomorphic. Let $I_n$  denote the identity matrix,  $J_{m \times n}$  denotes the $m\times n$ matrix of all ones and if $m=n$, we use the notation $J_m$.  We write $\mathbf{0}_{m \times n}$ to represent zero matrix of order $m \times n$ and simply write $\mathbf{0}$ if there is no scope of confusion with the order of the matrix.   For a  symmetric  matrix $A$, the inertia of  $A$, denoted by $\textup{In}(A)$ is  the triplet  $(\mathbf{n}_{+}(A),\mathbf{n}_{0}(A), \mathbf{n}_{-}(A) )$, where $\mathbf{n}_{+}(A),\mathbf{n}_{0}(A)  \mbox{ and } \mathbf{n}_{-}(A)$ denote the number of positive eigenvalues of $A$, the multiplicity of  $0$  as an eigenvalue of $A$ and the number of negative eigenvalues of $A$, respectively. 

For $n\geq 2$, let the $i^{th}$ largest component in a vector $X =(x_1,x_2,\cdots, x_n)\in \mathbb{R}^n$ be denoted by $x_{[i]}$ and let $X_{\downarrow}=( x_{[1]}, x_{[2]}, \cdots ,x_{[n]})$ denote the decreasing rearrangement of $x$. For $X =(x_1,x_2,\cdots, x_n)$ and $Y =(y_1,y_2,\cdots, y_n)$, we say that $X$ majorizes $Y$, and write $X \succeq Y$ if $$\sum_{i=1}^k x_{[i]} \geq \sum_{i=1}^k y_{[i]} \mbox{ for } k=1,2,\ldots, n,$$ with equality for $k=n.$ We write $X\succ Y$ if  $X\succeq Y$ and $X_{\downarrow}\neq Y_{\downarrow}$.

For $t\geq 2$, let $K_{n_1,n_2,\cdots,n_t}$ denote the complete multipartite graph on $n=\sum_{i=1}^t n_i$ vertices. The complete graph $K_t$ ($t\geq 2$) can also be seen as  complete $t$-partite graph  $K_{n_1,n_2,\cdots,n_t}$ with $n_i=1$ for all $1\leq i\leq t$. Let us  recall the definition of two special types of complete $t$-partite graphs on $n$ vertices, namely complete split graph $S_{n,t}$ and Tur{\'a}n graph $T_{n,t}$:
\begin{enumerate}
\item Complete split graph $S_{n,t} \approxeq K_{n-t+1,\scriptsize{\underbrace{1,1,\ldots,1}_{t-1\ times}}}$ consisting of an independent set of $n-t+1$ vertices and a clique of $t-1$ vertices, such that each vertex of the  independent set is adjacent  to each  vertex of the clique.

\item Tur{\'a}n graph $T_{n,t} \approxeq K_{\ceil*{n/t}, \cdots, \ceil*{n/t},\floor*{n/t},\cdots, \floor*{n/t}}$, the $(t+1)$-clique-free graph with maximum number of edges. 
\end{enumerate}

In~\cite{Indulal}, the distance energy of a graph is defined as the sum of the absolute values of the distance eigenvalues.  Subsequently, many interesting results were obtained related to distance energy (for example see~\cite{Sun,So,Stev,Var,Zhang-x} and~\cite[Section 8]{Aouchiche}). In particular, the distance energy of complete $t$-partite graph  has been studied in~\cite{So,Stev,Zhang-x}. More precisely, in~\cite{Zhang-x}, the author  obtained the inertia of the distance matrix of $K_{n_1,n_2,\cdots,n_t}$ and also consider the extremization (maximization and minimization) problem for the distance energy of $K_{n_1,n_2,\cdots,n_t}$.  

In this manuscript,   we find the  inertia of the squared distance matrix $\Delta(K_{n_1,n_2,\cdots,n_t})$ and using the  inertia of $\Delta(K_{n_1,n_2,\cdots,n_t})$, we compute  the squared distance energy   $E_{\Delta}(K_{n_1,n_2,\cdots,n_t})$. Further, we  prove the following: For fixed value of $n$ and $t$, let $n=\sum_{i=1}^t n_t$.  Then  $ E_{\Delta}(K_{n_1,n_2,\cdots,n_t})=8(n-t)$ if $n_i \geq 2$ for $1\leq i \leq t$  and  $$ 8(n-t)+2(h-1) \leq E_{\Delta}(K_{n_1,n_2,\cdots,n_t}) < 8(n-t)+2h \mbox{ if } h= |\{i : n_i=1\}|.$$ 
Furthermore, we show that if $ (m_1,m_2,\cdots,m_t)\succ (n_1,n_2,\cdots,n_t)$, then 
$$E_{\Delta}(K_{m_1,m_2,\cdots,m_t}) \geq E_{\Delta}(K_{n_1,n_2,\cdots,n_t}) \mbox{ and }\rho(K_{m_1,m_2,\cdots,m_t}) > \rho(K_{n_1,n_2,\cdots,n_t}).$$
As a consequence, we can conclude that   $S_{n,t}$ and  $T_{n,t}$ are extremal graphs for which the squared distance energy and as well as the spectral radius of the squared distance matrix of complete $t$-partite graphs on $n$  vertices attain its maxima and minima, respectively. Moreover,  we found that in the case of the spectral radius, these extremal graphs are unique up to isomorphism,  but for the squared distance energy, the maximal graph $S_{n,t}$ is unique up to isomorphism, whereas the minimal graph $T_{n,t}$ is unique up to isomorphism if and only if $n\leq 2t+1$. 

For fixed value of $n$, $t$ and $h$, let $\mathcal{M}(n,t,h)$ be the class of complete $t$-partite graphs on $n$ vertices where the number of partitions of size $1$ is $h$,
 {\it i.e,} 
$$
\mathcal{M}(n,t,h)=\left\{K_{n_1,n_2,\cdots,n_t} \; : \; n=\sum_{i=1}^t n_i \mbox{ and }h=|\{ i: n_i=1\}| \right\}.$$
In our investigation, we observe that  the major distinction is in the spectral properties of  $\Delta(K_{n_1,n_2,\cdots,n_t})$ and $D(K_{n_1,n_2,\cdots,n_t})$  if $K_{n_1,n_2,\cdots,n_t}$ in $\mathcal{M}(n,t,h)$. For example,  unlike the case of $D(K_{n_1,n_2,\cdots,n_t})$, if $0$ is an eigenvalue of $\Delta(K_{n_1,n_2,\cdots,n_t})$, then it is of multiplicity one.

This article is organized as follows. In Section~\ref{sec:element}, we discuss a few  preliminary results useful for this article. In Section~\ref{sec:char-poly}, we compute the characteristic polynomial of $\Delta(K_{n_1,n_2,\cdots,n_t})$ and also find some of its related results. In Section~\ref{sec:inertia-n-energy}, we determine the inertia of $\Delta(K_{n_1,n_2,\cdots,n_t})$ and compute the squared distance energy  $E_{\Delta}(K_{n_1,n_2,\cdots,n_t})$. Next,  in Section~\ref{sec:max-min-energy}, we consider problem related to maximization and minimization of  $E_{\Delta}(K_{n_1,n_2,\cdots,n_t})$ subject to different conditions and use majorization techniques to find  the  extremal graphs for which $E_{\Delta}(K_{n_1,n_2,\cdots,n_t})$ attains its the maximum  and minimum. In this case, we also   obtain the requisite conditions under which these extremal graphs are unique upto isomorphism. Finally, in Section~\ref{sec:sp-radius}, using majorization techniques we show  that for fixed value of $n$ and $t$, if $n=\sum_{i=1}^t n_i$,  then  the spectral radius $\rho(K_{n_1,n_2,\cdots,n_t})$ of the squared distance matrix $\Delta(K_{n_1,n_2,\cdots,n_t})$  uniquely attains   its maximum value at $S_{n,t}$ and minimum value at $T_{n,t}$. We also provide an alternative proof for the same.

\section{Some Preliminary Results}\label{sec:element}

In this section, we discuss a few preliminary results useful for our subsequent sections. We first recall some existing results on majorization and matrix theory.

Let $\mathcal{B}_n = \{(z_1,z_2,\cdots,z_n) \in \mathbb{Z}^n : z_1 \ge z_2 \ge \cdots \ge z_n\}$. For $1\leq j \leq n,$ let $E^n_{j}$ be the $n$-tuple  with $1$ at the $j^{th}$ entry and for  $1 \le j,k \le n$, we use $E_{j,k}^n$ to denote $E_k^n-E_j^n$. With these notations  we now state the following results.

\begin{theorem}\cite{Ob2}\label{thm:maj_seq}
Let $X, Y \in \mathcal{B}_n$ and $X \neq Y$. Then $Y \succ X$ if and only if there exists a sequence in $\mathcal{B}_n$ such as $Y = Y_0 \succ Y_1 \succ \cdots \succ Y_l = X$ such that for 
$1\leq i \leq l, Y_i = Y_{i-1} + E_{j_i,k_i}^n$ for some $k_i > j_i$.
\end{theorem}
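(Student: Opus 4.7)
The proof breaks into two directions. The reverse implication is immediate from transitivity of majorization: each step $Y_i = Y_{i-1} + E^n_{j_i,k_i}$ with $k_i > j_i$ subtracts one from an earlier (hence larger) coordinate and adds one to a later (smaller) coordinate, so the partial sums of $Y_{i-1}$ weakly dominate those of $Y_i$ with strict inequality on $[j_i, k_i)$ and equal totals; hence $Y_{i-1} \succ Y_i$ at every step and $Y \succ X$ by composition.

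The forward implication is the substantive content, and I would prove it by producing one elementary transfer at a time together with an integer potential. Given $Y \succ X$ with $Y \ne X$ in $\mathcal{B}_n$, set $j := \min\{i : y_i \ne x_i\}$; the partial-sum inequality at $j$ forces $y_j > x_j$. Push $j$ to the right end of its constant block by defining $J := \max\{i : y_i = y_j\}$. Then $y_J > x_J$ still holds (since $x_J \leq x_j < y_j = y_J$) and $y_J > y_{J+1}$ (the case $J = n$ is impossible, as it would make $\sum(y_i-x_i) > 0$). Next set $K := \min\{i > J : y_i < x_i\}$; this exists because $\sum_{i \leq J}(y_i - x_i) \geq J - j + 1 > 0$ while the total is zero. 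Minimality of $K$ forces $K$ to be the left end of its own constant block: no $i \leq J$ shares $y_K$'s value (since $y_i \geq y_J > y_K$ there), and any $i \in (J, K)$ with $y_i = y_K$ would satisfy $y_i < x_i$, contradicting minimality. Hence $y_{K-1} > y_K$.

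The technical core is to check that $Y' := Y + E^n_{J,K}$ lies in $\mathcal{B}_n$ and satisfies $Y' \succeq X$. Monotonicity of $Y'$ can fail only at the jumps $J$ and $K$; the block-endpoint choices $y_J > y_{J+1}$ and $y_{K-1} > y_K$ secure the non-adjacent case, while in the adjacent case $J+1 = K$ the integrality bound $y_J - y_K \geq (x_J + 1) - (x_K - 1) \geq 2$ preserves strict descent after the unit transfer. For majorization, the partial sums of $Y'$ fall below those of $Y$ by exactly one on $J \leq m < K$ and agree with them elsewhere, and on this range $\sum_{i \leq m}(y_i - x_i) \geq 1$ follows from $y_i \geq x_i + 1$ on $[j, J]$ together with $y_i \geq x_i$ on $(J, K)$ (by minimality of $K$). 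The same partial-sum computation also shows $Y \succ Y'$, so the step is a legitimate link in the chain.

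To close the induction I would use the integer potential $\Phi(Y) := \sum_{i=1}^n |y_i - x_i|$. Because $y_J > x_J$ and $y_K < x_K$ strictly with integer values, $y_J - 1 \geq x_J$ and $y_K + 1 \leq x_K$, so each step drops both $|y_J - x_J|$ and $|y_K - x_K|$ by one and hence $\Phi$ by exactly $2$. Since $\Phi \geq 0$ the iteration terminates, and it can halt only at $Y = X$, producing the required chain $Y = Y_0 \succ Y_1 \succ \cdots \succ Y_l = X$. I expect the main obstacle to be precisely the block-endpoint bookkeeping: naive choices of witnesses (e.g.\ the first and last indices of disagreement) readily produce unit transfers that violate the nonincreasing constraint, and the induction cannot close without the block-endpoint refinement described above.
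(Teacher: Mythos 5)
Your argument is correct. Note, however, that the paper does not prove this statement at all: it is quoted verbatim from the cited reference [Ob2] and used as a black box, so there is no in-paper proof to compare against. What you have written is essentially the classical Hardy--Littlewood--P\'olya/Muirhead transfer argument specialized to integer sequences: the reverse direction is the routine partial-sum computation, and the forward direction correctly identifies the one genuinely delicate point, namely that the donor index must be taken at the right end of its constant block and the recipient index at the left end of its block (with the extra integrality estimate $y_J-y_K\ge 2$ in the adjacent case $J+1=K$) so that the transferred tuple stays in $\mathcal{B}_n$; the potential $\Phi(Y)=\sum_i|y_i-x_i|$, which drops by exactly $2$ per step, gives termination. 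All the verifications ($Y'\succeq X$ via $\sum_{i\le m}(y_i-x_i)\ge 1$ on $J\le m<K$, and $Y\succ Y'$) check out, so the proposal is a complete and correct proof of the cited theorem.
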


\begin{theorem}\cite{Ob}\label{thm:maj_tuple}
Let $n_1, m_1, \cdots , n_k, m_k$ and $x_1, \cdots , x_t$ be  real numbers. Then
$$(n_1,n_2, \cdots , n_k) \succeq (m_1,m_2, \cdots , m_k) \textup{ if and only if } (n_1, \cdots , n_k, x_1, \cdots , x_t) \succeq (m_1, \cdots, m_k, x_1, \cdots , x_t).$$
\end{theorem}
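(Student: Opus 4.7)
The plan is to separate the two implications, reduce by induction on $t$ to the case $t=1$ of appending a single real number $x$ to both tuples, and carry out a case analysis based on the position of $x$ in the sorted versions of the two tuples. Let $a_1 \ge \cdots \ge a_k$ and $b_1 \ge \cdots \ge b_k$ be the decreasing rearrangements of $(n_1,\ldots,n_k)$ and $(m_1,\ldots,m_k)$, and let $p,q \in \{0,1,\ldots,k\}$ denote the insertion indices of $x$, i.e.\ $a_p \ge x > a_{p+1}$ and $b_q \ge x > b_{q+1}$ (with conventions $a_0 = b_0 = +\infty$ and $a_{k+1} = b_{k+1} = -\infty$); by symmetry I assume $p \le q$. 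The top-$r$ sum of the extended $n$-list is $\sum_{i=1}^{r} a_i$ for $r \le p$ and $\sum_{i=1}^{r-1} a_i + x$ for $r \ge p+1$, with an analogous description on the $m$-side using $q$; the equality of the total sums is immediate since $x$ appears on both sides.

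For the forward direction, splitting on $r$, the ranges $r \le p$ and $r \ge q+1$ reduce either directly or after cancelling a common $x$ to the original hypothesis at index $r$ or $r-1$. The main obstacle is the middle range $p+1 \le r \le q$, where the required inequality reads $\sum_{i=1}^{r-1} a_i + x \ge \sum_{i=1}^{r} b_i$. I would invoke the hypothesis at index $r$ to rewrite it as $\sum_{i=1}^{r-1} a_i \ge \sum_{i=1}^{r} b_i - a_r$ and then use the crucial insertion estimate $x > a_r$ (which holds since $r \ge p+1$ forces $a_r \le a_{p+1} < x$) to add $x$ on the left and $a_r$ on the right and finish.

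For the reverse direction, the target inequalities $\sum_{i=1}^{j} a_i \ge \sum_{i=1}^{j} b_i$ for $j \in \{1,\ldots,k\}$ split analogously. The ranges $j \le p$ and $j \ge q+1$ follow from the extended hypothesis at $r=j$ and $r=j+1$ respectively. In the middle range $p+1 \le j \le q$, I would apply the extended hypothesis at $r=j+1$: when $j=q$ the extra element $x$ appears on both sides and cancels, yielding the result directly; when $j < q$ one obtains $\sum_{i=1}^{j} a_i + x \ge \sum_{i=1}^{j+1} b_i$, which together with the companion estimate $b_{j+1} \ge b_q \ge x$ lets us drop the $x$ and conclude.

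As a cleaner but less elementary alternative, one may invoke the standard characterization that $X \succeq Y$ iff $\sum x_i = \sum y_i$ and $\sum \phi(x_i) \ge \sum \phi(y_i)$ for every continuous convex function $\phi$. Under this characterization the theorem becomes a one-line observation, since the common contribution $\sum_{j} \phi(x_j)$ cancels from both sides. The principal difficulty of the elementary approach outlined above is organizing the middle-range subcases so that the two insertion inequalities $x > a_{p+1}$ and $b_q \ge x$ are deployed at precisely the right indices.
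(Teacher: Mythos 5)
The paper offers no proof of this statement --- it is imported verbatim from \cite{Ob} as a known fact --- so there is no in-house argument to compare yours against. Judged on its own, your proof is essentially correct: the induction on $t$ reducing to the insertion of a single element $x$, the description of the top-$r$ sums of the extended tuples through the insertion indices $p$ and $q$, the middle-range estimates $x>a_{p+1}$ (forward direction) and $b_{j+1}\ge b_q\ge x$ (reverse direction), and the verification that the total sums still agree are all sound. The convex-function characterization you mention as an alternative is also a legitimate one-line proof, since the common terms $\sum_j\phi(x_j)$ cancel.

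The one step you should not wave away is the reduction ``by symmetry I assume $p\le q$.'' Majorization is a directional relation, so the two tuples cannot simply be interchanged, and the case $p>q$ is not the literal mirror image of the one you treat: in the forward direction the problematic middle range becomes $q+1\le r\le p$, where the required inequality is $\sum_{i=1}^{r}a_i\ge\sum_{i=1}^{r-1}b_i+x$ and is settled by the \emph{opposite} estimate $a_r\ge a_p\ge x$ (valid for $r\le p$) rather than $x>a_r$; dually, in the reverse direction one uses $x>b_j$ for $j\ge q+1$. These subcases do go through by entirely parallel computations, so nothing actually fails, but they must either be written out or the symmetry must be made honest --- for instance by noting that $X\succeq Y$ if and only if $-X\succeq -Y$ (compare bottom-$r$ sums using the equality of the totals), under which negation interchanges the roles of $p$ and $q$.
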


For every integer $t \ge 1$, let $\mathcal{C}_t = \{(n_1,n_2,\cdots,n_t) \in \mathbb{Z}^t : n_1 \ge n_2 \ge \cdots \ge n_t \ge 1\}$. Note that $(\mathcal{C}_t, \succeq)$ is a partially ordered set.

\begin{theorem}\cite{Ob2}\label{thm:thm_compare}
Let $(\mathcal{C}_t, \succeq)$ be the partially ordered set defined as above. Then for every $(n_1,n_2,\cdots,n_t) \in \mathcal{C}_t$  with $n = n_1+\cdots+n_t,$ we have 
$$
(n-t+1, \underbrace{1,\cdots,1}_{t-1}) \succeq (n_1,n_2,\cdots,n_t) \succeq \Big(\ceil*{\frac{n}{t}},\cdots,\ceil*{\frac{n}{t}},\floor*{\frac{n}{t}},\cdots,\floor*{\frac{n}{t}}\Big).
$$ 
\end{theorem}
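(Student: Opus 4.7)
The plan is to verify each of the two majorizations directly from the definition, by comparing partial sums; no preliminary rearrangement is needed because all three tuples in the statement are already non-increasing. The argument will use only elementary integer arithmetic under the standing constraints $n_1 \ge \cdots \ge n_t \ge 1$ and $\sum_{i=1}^t n_i = n$, so Theorem~\ref{thm:maj_seq} will not be invoked, although it would supply an alternative proof via an explicit chain of elementary moves of the form $E^t_{j,k}$.

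For the upper bound $(n-t+1, 1, \ldots, 1) \succeq (n_1, \ldots, n_t)$, I would observe that the left-hand partial sums are $n-t+1, n-t+2, \ldots, n$; writing $s_k = n_1 + \cdots + n_k$, the required inequality at level $k$ reduces to $n_{k+1} + \cdots + n_t \ge t-k$, which is immediate because there are exactly $t-k$ such terms and each is at least $1$. Equality at $k = t$ is built in from $\sum_i n_i = n$.

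For the lower bound $(n_1, \ldots, n_t) \succeq (\lceil n/t \rceil, \ldots, \lceil n/t \rceil, \lfloor n/t \rfloor, \ldots, \lfloor n/t \rfloor)$, I would write $n = qt + r$ with $0 \le r < t$, so the balanced tuple has $r$ entries equal to $q+1$ followed by $t-r$ entries equal to $q$, and its $k$th partial sum equals $k(q+1)$ when $k \le r$ and $kq + r$ when $k > r$. The verification then splits into two regimes, each handled by contradiction. When $k \le r$ and $s_k < k(q+1)$, monotonicity forces $n_k \le q$, hence $n_j \le q$ for every $j \ge k$, and summing gives $\sum_i n_i < k(q+1) + (t-k)q = tq + k \le tq + r = n$, impossible. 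When $r < k < t$ and $s_k < kq + r$, we obtain $\sum_{j > k} n_j > (t-k)q$; integrality upgrades this to $\sum_{j > k} n_j \ge (t-k)q + 1$, which combined with $n_j \le n_k$ for $j > k$ forces $n_k \ge q+1$, whence $s_k \ge k(q+1) \ge kq + r + 1$, again a contradiction. The case $k = t$ is equality in both majorizations and requires no separate argument.

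The main obstacle I anticipate lies in this second regime of the lower bound: the integer jump from $\sum_{j > k} n_j > (t-k)q$ to $\sum_{j > k} n_j \ge (t-k)q + 1$ is exactly what turns the monotonicity step into $n_k \ge q+1$ rather than only $n_k \ge q$, and without it the argument would collapse. That is precisely why the lower bound must be split into the cases $k \le r$ and $k > r$ rather than treated uniformly. Once this point is isolated, the rest is elementary counting.
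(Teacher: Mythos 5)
Your proposal is correct. Note that the paper offers no proof of this statement at all: Theorem~\ref{thm:thm_compare} is quoted from the reference [Ob2] (Oboudi), so there is no internal argument to compare yours against. Your direct verification of the partial-sum inequalities is sound in both directions: the upper bound reduces at level $k$ to $n_{k+1}+\cdots+n_t\ge t-k$, which is immediate; and in the lower bound the two regimes $k\le r$ and $r<k<t$ (with $n=qt+r$) are each handled by a valid contradiction, the key point being exactly the integrality upgrade from $\sum_{j>k}n_j>(t-k)q$ to $\sum_{j>k}n_j\ge(t-k)q+1$, which you correctly isolate as the step that forces $n_k\ge q+1$ and hence $s_k\ge kq+k\ge kq+r+1$. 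The only route suggested by the surrounding text of the paper would be the one you explicitly decline to take, namely exhibiting a chain of elementary transfers $E^t_{j,k}$ as in Theorem~\ref{thm:maj_seq} from the balanced tuple up to $(n_1,\dots,n_t)$ and from there up to $(n-t+1,1,\dots,1)$; that approach is constructive and matches how the majorization is actually consumed later in the paper, whereas yours is shorter and verifies the definition directly. Either is acceptable; your argument is complete as written.
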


The result below follows from  Theorem~\ref{thm:thm_compare}, and hence presented as a corollary without proof.


\begin{cor}\label{lem:major222}
For $t\geq 1$ and   let  $(\mathcal{C}_t', \succeq)$ be a partially ordered set, where $\mathcal{C}_t' = \{(n_1,n_2,\cdots,n_t) \in \mathbb{Z}^t : n_1 \ge n_2 \ge \cdots \ge n_t \ge 2\}$. Then for every $(n_1,n_2,\cdots,n_t) \in \mathcal{C}_t'$  with $n = \sum_{i=1}^t n_i,$ we have 
$$
(n-2(t-1), \underbrace{2,2,\cdots,2}_{t-1}) \succeq (n_1,n_2,\cdots,n_t).
$$ 
\end{cor}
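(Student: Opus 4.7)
The plan is to deduce Corollary~\ref{lem:major222} from Theorem~\ref{thm:thm_compare} by a simple shift trick, since every element of $\mathcal{C}_t'$ is obtained from an element of $\mathcal{C}_t$ by adding $1$ to each coordinate. Concretely, given $(n_1, n_2, \dots, n_t) \in \mathcal{C}_t'$ with $n = \sum_{i=1}^t n_i$, I would set $m_i = n_i - 1$ for each $i$. Since $n_i \geq 2$ and the $n_i$ are non-increasing, the tuple $(m_1, m_2, \dots, m_t)$ lies in $\mathcal{C}_t$ and has total sum $n - t$.

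Next, I would apply Theorem~\ref{thm:thm_compare} to $(m_1, \dots, m_t)$ to obtain
$$(n - t - (t-1),\, \underbrace{1, 1, \dots, 1}_{t-1}) \; = \; (n - 2t + 1,\, \underbrace{1, \dots, 1}_{t-1}) \;\succeq\; (m_1, m_2, \dots, m_t).$$
It then remains to translate this back to the original tuples. For this I would invoke the elementary fact that majorization is preserved under adding the same constant to every coordinate: if $X \succeq Y$ in $\mathbb{R}^t$, then $X + c(1,1,\dots,1) \succeq Y + c(1,1,\dots,1)$, since adding $c$ shifts each partial sum by $kc$ on both sides and in particular preserves equality at $k=t$. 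Adding $1$ to every coordinate of the previous majorization then yields
$$(n - 2(t-1),\, \underbrace{2, 2, \dots, 2}_{t-1}) \;\succeq\; (n_1, n_2, \dots, n_t),$$
which is the desired inequality.

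If one prefers to bypass the shift lemma, the same conclusion can be checked directly from the definition. Denote by $S_k$ the $k$-th partial sum of the left-hand tuple and by $T_k = \sum_{i=1}^k n_i$ that of the right-hand tuple. A short computation gives $S_k = n - 2(t-k)$ for $1 \leq k \leq t$, so
$$S_k - T_k \;=\; \sum_{i=k+1}^{t}(n_i - 2) \;\geq\; 0,$$
with equality at $k=t$ (and the inequality holding for each $k < t$ because $n_i \geq 2$). There is no substantive obstacle here; the only point to be careful about is verifying that the left-hand tuple is itself non-increasing, which holds precisely because $n - 2(t-1) \geq 2$ whenever $(n_1,\dots,n_t) \in \mathcal{C}_t'$ (as then $n \geq 2t$).
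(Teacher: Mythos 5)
Your proposal is correct and follows the route the paper itself indicates: the authors state the corollary as a consequence of Theorem~\ref{thm:thm_compare} without proof, and your shift argument (subtract $1$ from each coordinate, apply the theorem, add $1$ back using invariance of majorization under adding a constant to all entries) is the natural way to fill in that deduction. Your direct verification via the partial sums $S_k - T_k = \sum_{i=k+1}^{t}(n_i-2) \geq 0$, including the check that $n \geq 2t$ makes the left-hand tuple non-increasing, is also sound.
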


Now we state a well known result namely inclusion principle that gives an interlacing relation between the eigenvalues of a real symmetric matrix and its principal minors.
\begin{theorem}\cite[Theorem 4.3.28]{Horn}\label{thm:inter}
Let $A$ be an $n\times n$ real symmetric matrix, let $r$ be an integer with $1\leq r\leq n$, and let $B$ denote any $r \times r$ principal submatrix of $A$. Let $\lambda_1(A) \geq \lambda_2(A)\geq \cdots \geq   \lambda_{n}(A) $ and $\lambda_1(B) \geq \lambda_2(B)\geq \cdots \geq   \lambda_{r}(B) $ be the eigenvalues of $A$ and $B$, respectively. For each integer $i$ such that $1\leq i\leq r$, we have
$$ \lambda_i(A)\geq \lambda_i(B) \geq \lambda_{n-r+i}(A).$$
\end{theorem}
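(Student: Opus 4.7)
The plan is to derive both inequalities from the Courant--Fischer min-max characterisation of eigenvalues of a real symmetric matrix. For an $n \times n$ real symmetric $A$ with eigenvalues $\lambda_1(A) \geq \cdots \geq \lambda_n(A)$, this characterisation asserts
\begin{equation*}
\lambda_i(A) \;=\; \max_{\dim S=i}\; \min_{0\neq x\in S} \frac{x^{\top}A x}{x^{\top}x} \;=\; \min_{\dim S=n-i+1}\; \max_{0\neq x\in S} \frac{x^{\top}A x}{x^{\top}x},
\end{equation*}
where $S$ ranges over subspaces of $\mathbb{R}^n$ of the indicated dimension. I would either quote this or sketch its proof: diagonalise $A$ in an orthonormal eigenbasis, and use the dimension-count fact that any $i$-dimensional subspace of $\mathbb{R}^n$ meets any $(n-i+1)$-dimensional subspace nontrivially; apply this to the span of the eigenvectors for $\lambda_i(A),\ldots,\lambda_n(A)$ to obtain the max-min form, and dually for the min-max form.

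Next, since $B$ is an $r\times r$ principal submatrix of $A$, there is an index set $I\subseteq\{1,\ldots,n\}$ with $|I|=r$ and $B = A[I,I]$. Let $V = \{x\in\mathbb{R}^n : x_j = 0 \text{ for all } j\notin I\}$, an $r$-dimensional coordinate subspace which we identify canonically with $\mathbb{R}^r$ via the coordinates in $I$. Under this identification, for every $x\in V$ with corresponding $y\in\mathbb{R}^r$ we have $x^{\top}Ax = y^{\top}By$ and $x^{\top}x = y^{\top}y$. Applying Courant--Fischer to $B$ viewed inside $V$ therefore gives
\begin{equation*}
\lambda_i(B) \;=\; \max_{\substack{W\subseteq V\\ \dim W = i}}\; \min_{0\neq x\in W}\frac{x^{\top}Ax}{x^{\top}x} \;=\; \min_{\substack{W\subseteq V\\ \dim W = r-i+1}}\; \max_{0\neq x\in W}\frac{x^{\top}Ax}{x^{\top}x}.
\end{equation*}

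The inequality $\lambda_i(A)\geq\lambda_i(B)$ is then immediate from the first (max-min) form: the inner min-quantity is the same Rayleigh quotient of $A$ in both cases, but $\lambda_i(A)$ maximises over all $i$-dimensional subspaces of $\mathbb{R}^n$, a strictly larger collection than the $i$-dimensional subspaces of $V$ over which $\lambda_i(B)$ maximises. Symmetrically, $\lambda_i(B)\geq\lambda_{n-r+i}(A)$ comes from the second (min-max) form: matching dimensions via $n-(n-r+i)+1 = r-i+1$, the minimum over $(r-i+1)$-dimensional subspaces of $V$ dominates the minimum over all such subspaces of $\mathbb{R}^n$.

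Granted Courant--Fischer, what remains is pure bookkeeping: pairing each interlacing inequality with the correct variational form and verifying that dimension $r-i+1$ corresponds to the index $n-r+i$ in the decreasing eigenvalue list. The chief obstacle, if Courant--Fischer is not assumed, is proving it; the cleanest route is the orthonormal eigenbasis/intersection argument sketched above. An older approach due to Cauchy analyses the sign pattern of characteristic polynomials, but the subspace/min-max proof is more transparent and generalises immediately to Weyl-type interlacing and to Hermitian matrices.
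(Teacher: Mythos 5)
Your proof is correct: both interlacing inequalities follow exactly as you say from the two variational forms of Courant--Fischer applied to the coordinate subspace $V$, and the dimension count $n-(n-r+i)+1=r-i+1$ is right. The paper offers no proof of this statement --- it is quoted verbatim from Horn and Johnson (Theorem 4.3.28), where the argument given is essentially the same min-max/subspace-restriction one you present --- so there is nothing further to compare.
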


The next  result is a consequence of the inclusion principle and Weyl's inequality that gives  interlacing inequalities of a rank one perturbation to a real symmetric matrix.
\begin{lem}\cite[Corollary 4.3.9]{Horn}\label{lem:rank-pert-ev}
Let $A$ be an $n\times n$ real symmetric matrix, and $B=A+J_n$, where $J_n$ is the matrix of all ones. Let $\lambda_1(A) \geq \lambda_2(A)\geq \cdots \geq \lambda_{n-1}(A)\geq  \lambda_{n}(A)$ and $\lambda_1(B) \geq \lambda_2(B)\geq \cdots \geq \lambda_{n-1}(B)\geq \lambda_{n}(B)$  be the eigenvalues of $A$ and $B$, respectively. Then the following interlacing inequalities holds:
$$\lambda_1(B)\geq \lambda_2(A) \geq \lambda_2(B)\geq \cdots \geq \lambda_{n-1}(A)\geq  \lambda_{n}(B)\geq \lambda_{n}(A).$$
\end{lem}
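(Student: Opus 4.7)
My plan is to derive the displayed interlacing directly from Weyl's inequalities applied to the rank-one perturbation $B-A = J_n$. The key observation is that $J_n = \mathbf{1}\mathbf{1}^T$ is positive semidefinite of rank one, so its spectrum is $\{n,0,0,\ldots,0\}$; in particular $\lambda_1(J_n)=n$ and $\lambda_2(J_n)=\cdots=\lambda_n(J_n)=0$. These zero eigenvalues are precisely what make the Weyl contribution from $J_n$ vanish on both sides of the sandwich.

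I would recall Weyl's theorem in the form
$$\lambda_{i+j-1}(X+Y)\leq \lambda_i(X)+\lambda_j(Y) \quad\text{for } i+j-1\leq n,$$
with dual lower bound $\lambda_{i+j-n}(X+Y)\geq \lambda_i(X)+\lambda_j(Y)$ whenever $i+j\geq n+1$. Setting $X=A$, $Y=J_n$, $i=k$, $j=n$ in the lower bound gives $\lambda_k(B)\geq \lambda_k(A)+\lambda_n(J_n)=\lambda_k(A)$ for every $k$. Setting instead $i=k-1$, $j=2$ in the upper bound (valid for $k\geq 2$) gives $\lambda_k(B)\leq \lambda_{k-1}(A)+\lambda_2(J_n)=\lambda_{k-1}(A)$. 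Together these produce the sandwich $\lambda_{k-1}(A)\geq \lambda_k(B)\geq \lambda_k(A)$ for $k=2,\ldots,n$, which, combined with the obvious $\lambda_1(B)\geq \lambda_1(A)$, chains up into the asserted interlacing.

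I do not anticipate any real obstacle: the only subtlety is choosing the Weyl indices $(i,j)$ so that the eigenvalue of $J_n$ that appears is $0$ rather than $n$. Two alternative routes would give the same conclusion. First, a min-max argument: since $x^T B x = x^T A x + (\mathbf{1}^T x)^2 \geq x^T A x$, the Courant--Fischer characterisation immediately yields $\lambda_k(B)\geq \lambda_k(A)$, while the single extra linear constraint $\mathbf{1}^T x =0$ on the maximising subspace shifts the index by at most one and produces $\lambda_{k-1}(A)\geq \lambda_k(B)$. Second, one may embed $B$ inside a bordered $(n+1)\times(n+1)$ Hermitian matrix of the form $\bigl(\begin{smallmatrix} A & \mathbf{1}\\ \mathbf{1}^T & c\end{smallmatrix}\bigr)$ and read the interlacing off from the inclusion principle of Theorem~\ref{thm:inter}, after controlling the spurious extra eigenvalue by letting $c$ tend to $\pm\infty$. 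Either route confirms the same two-sided bound and hence the full chain.
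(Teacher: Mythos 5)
Your Weyl-inequality derivation is correct and is exactly the standard argument behind the cited result (the paper gives no proof of this lemma, deferring to Horn and Johnson, Corollary 4.3.9, which is itself obtained from Weyl's inequalities in just the way you describe), so you are on essentially the same route. One remark: what your indices actually yield is the chain $\lambda_1(B)\ge\lambda_1(A)\ge\lambda_2(B)\ge\lambda_2(A)\ge\cdots\ge\lambda_n(B)\ge\lambda_n(A)$, which is the correct statement and the one the paper actually uses in Theorem~\ref{thm:ni-geq-2}; the chain as printed in the lemma (beginning $\lambda_1(B)\ge\lambda_2(A)\ge\lambda_2(B)$) appears to contain a typo, since $\lambda_2(A)\ge\lambda_2(B)$ is in general false for a positive semidefinite perturbation.
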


Observe that, given a monic polynomial $f$ with real coefficients we know that $f(x)$ is positive for sufficiently large value $x$.  The lemma below is an immediate consequence of this observation.

\begin{lem}\label{lem:comp-r-root}
Let $f$ and $g$ be   monic polynomials with real coefficients having real roots. Let $\lambda_1(f)$ and  $\lambda_1(g)$ denote the largest real root of $f$ and $g$, respectively. If $f(\lambda_1(g))<0$, then $\lambda_1(g) < \lambda_1(f).$
\end{lem}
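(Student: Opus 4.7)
The plan is to prove this by contrapositive: I will show that if $\lambda_1(g) \geq \lambda_1(f)$, then $f(\lambda_1(g)) \geq 0$, which negates the hypothesis. The key point is that the sign of a monic real-rooted polynomial stabilizes to positive beyond its largest root, so $f$ is nonnegative on the half-line $[\lambda_1(f), \infty)$.

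First I would use the hypothesis that $f$ is monic with real coefficients and splits over $\mathbb{R}$ to factor it as $f(x) = \prod_{i=1}^{\deg f}(x - \alpha_i)$, where $\alpha_1, \alpha_2, \ldots, \alpha_{\deg f}$ are its real roots and $\alpha_i \leq \lambda_1(f)$ for every $i$. Then for any $x \geq \lambda_1(f)$, each linear factor $x - \alpha_i$ is nonnegative, so $f(x) \geq 0$. (This is the precise form of the observation mentioned just before the lemma, namely that a monic polynomial is positive for sufficiently large arguments; here the real-rootedness upgrades "sufficiently large" to "at or beyond the largest root.")

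Next, suppose for contradiction that $\lambda_1(g) \geq \lambda_1(f)$. Applying the inequality above at $x = \lambda_1(g)$ gives $f(\lambda_1(g)) \geq 0$, contradicting the assumption $f(\lambda_1(g)) < 0$. Hence $\lambda_1(g) < \lambda_1(f)$, as required.

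There is no real obstacle here; the only thing to be careful about is explicitly invoking the real-rootedness of $f$ (not just the real-rootedness of $g$) when factoring, since without that hypothesis one could only conclude $f(x) > 0$ for $x$ sufficiently large, not for all $x \geq \lambda_1(f)$. The hypothesis on $g$ is used only implicitly, to guarantee that $\lambda_1(g)$ is a well-defined real number at which $f$ can be evaluated.
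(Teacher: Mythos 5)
Your proof is correct. It differs slightly in mechanism from the paper's: the paper treats the lemma as an immediate consequence of the observation that a monic real polynomial is positive for sufficiently large $x$ — i.e., since $f(\lambda_1(g))<0$ and $f(M)>0$ for large $M$, the intermediate value theorem yields a real root of $f$ in $(\lambda_1(g),M)$, whence $\lambda_1(f)>\lambda_1(g)$. You instead argue by contrapositive, factoring $f(x)=\prod_i(x-\alpha_i)$ over $\mathbb{R}$ to conclude $f\geq 0$ on the whole half-line $[\lambda_1(f),\infty)$, so that $f(\lambda_1(g))<0$ forces $\lambda_1(g)<\lambda_1(f)$. The trade-off is minor but real: your route genuinely uses the hypothesis that \emph{all} roots of $f$ are real (as you correctly flag), and in exchange avoids any appeal to the intermediate value theorem; the paper's route needs only that $f$ has real coefficients and at least one sign change to the right of $\lambda_1(g)$, so it would survive even if $f$ had some non-real roots. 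In the context of the paper both hypotheses hold (the polynomials in question are characteristic polynomials of symmetric matrices), so either argument is perfectly adequate.
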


We conclude this section with a lemma that gives two inequalities. These inequalities can be obtained with simple calculations, and hence we omit the proofs. 

\begin{lem}\label{lem:ab-ineq}
Let  $n_p,n_q $ be real numbers. Then the following inequalities hold.
\begin{enumerate}
\item[($i$)] If $a>0$ and $b>3$, then 
$$ \frac{n_p}{a}+ \frac{n_q}{b} > \frac{n_p-1}{a+3}+ \frac{n_q+1}{b-3} \textup{\ if and only if } (3n_p+a)b(b-3) > (3n_q+b)a(a+3).$$

\item[($ii$)] If $a>3$ and $b>0$, then 
$$ \frac{n_p}{a}+ \frac{n_q}{b} > \frac{n_p-1}{a-3}+ \frac{n_q+1}{b+3} \textup{\ if and only if } 3[n_q a(a-3)-n_p b(b+3)] > ab(a-b-6).$$
\end{enumerate}
\end{lem}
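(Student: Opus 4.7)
The plan is routine algebraic manipulation: in each part, bring the two terms involving $n_p$ to one side and the two terms involving $n_q$ to the other, simplify each side over a common denominator, and then clear denominators using the sign conditions on $a$ and $b$.

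For part $(i)$, I would first rewrite the inequality
$$\frac{n_p}{a}+ \frac{n_q}{b} > \frac{n_p-1}{a+3}+ \frac{n_q+1}{b-3}$$
as
$$\frac{n_p}{a} - \frac{n_p-1}{a+3} > \frac{n_q+1}{b-3} - \frac{n_q}{b}.$$
The left-hand side simplifies to $\dfrac{3n_p+a}{a(a+3)}$ and the right-hand side to $\dfrac{3n_q+b}{b(b-3)}$. Since $a>0$ forces $a(a+3)>0$ and $b>3$ forces $b(b-3)>0$, both denominators are strictly positive, so I can cross-multiply without flipping the inequality, obtaining $(3n_p+a)b(b-3) > (3n_q+b)a(a+3)$, and each step is reversible.

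Part $(ii)$ proceeds identically: rearrange to
$$\frac{n_p}{a} - \frac{n_p-1}{a-3} > \frac{n_q+1}{b+3} - \frac{n_q}{b},$$
simplify both sides to $\dfrac{a-3n_p}{a(a-3)}$ and $\dfrac{b-3n_q}{b(b+3)}$ respectively, then use $a>3$ and $b>0$ to guarantee positivity of $a(a-3)$ and $b(b+3)$, cross-multiply to get $(a-3n_p)b(b+3) > (b-3n_q)a(a-3)$, and finally regroup the $ab(\cdot)$ and $3n_p,3n_q$ terms to reach $3[n_q a(a-3)-n_p b(b+3)] > ab(a-b-6)$.

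There is no real obstacle: the only mild bookkeeping concern is tracking which denominators are positive so that the direction of the inequality is preserved when one clears fractions, and both sign conditions in the hypotheses are exactly what is needed to do this. The proof is therefore a short "if and only if" computation in each case, which is why the authors omit it.
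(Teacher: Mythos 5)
Your proof is correct, and it is exactly the ``simple calculation'' the paper alludes to when it omits the proof of this lemma: isolate the $n_p$-terms and $n_q$-terms, simplify each difference to a single fraction with positive denominator, and cross-multiply reversibly. The algebra checks out in both parts, including the final regrouping in part $(ii)$ from $(a-3n_p)b(b+3)>(b-3n_q)a(a-3)$ to $3[n_q a(a-3)-n_p b(b+3)]>ab(a-b-6)$.
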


In the next section, we compute the characteristic polynomial of the squared distance matrix of the complete $t$-partite graph $K_{n_1,n_2,\cdots,n_t}$ and discusses some of its consequences.

\section{Characteristic Polynomial of $\Delta(K_{n_1,n_2,\cdots,n_t})$}\label{sec:char-poly}
Let $\Delta(K_{n_1,n_2,\cdots,n_t})$ be the squared distance matrix of the complete $t$-partite graph $K_{n_1,n_2,\cdots,n_t}$. Thus, $\Delta (K_{n_1,n_2,\cdots,n_t})$ can be expressed in the following block form 
\begin{small}
\begin{equation}\label{eqn:D(K)}
\Delta(K_{n_1,n_2,\cdots,n_t}) = \left[\begin{array}{c|c|c|c}
4(J_{n_1} - I_{n_1}) & J_{n_1 \times n_2} & \cdots & J_{n_1 \times n_t}\\
\hline
J_{n_2 \times n_1} &4(J_{n_2} - I_{n_2}) & \cdots & J_{n_2 \times n_t}\\
\hline
\vdots & \cdots  & \ddots &\vdots\\
\hline
J_{n_t \times n_1} & J_{n_t \times n_2} &  \cdots & 4(J_{n_t} - I_{n_t})\\
\end{array} \right].
\end{equation}
\end{small}

 We begin with a few lemmas which will be  used to compute the characteristic polynomial of $\Delta(K_{n_1,n_2,\cdots,n_t})$.
 
 \begin{lem}\label{Lem:Det_1}
Let $x \in \mathbb{R}$ and $A_m$ be an $m \times m$ matrix  of the following form
\begin{small}
\begin{equation*}\label{eqn:M_1}
A_m= \left[\begin{array}{cccc}
p_1 & p_1 & \cdots & p_1\\
p_2 & 4(p_2 - 1)-x & \cdots & p_2\\
\vdots & \cdots  & \ddots &\vdots\\
p_m  & p_m &  \cdots & 4(p_m - 1)-x\\
\end{array} \right].
\end{equation*} 
\end{small}
Then, the determinant of  $A_m$ is given by $$\det A_m = p_1\prod_{j \neq 1}(3p_j - 4-x).$$
\end{lem}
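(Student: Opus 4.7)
The plan is to reduce $A_m$ to a (lower) triangular form by elementary column operations that leave the determinant unchanged, after which the determinant reads off as the product of the diagonal entries.

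More concretely, I would perform the column operation $C_j \leftarrow C_j - C_1$ for each $j = 2, 3, \ldots, m$. The key observation is that column $1$ has entries $(p_1, p_2, \ldots, p_m)^T$, while column $j$ (for $j \ge 2$) has entry $p_i$ in every row $i \neq j$ and the entry $4(p_j-1)-x$ in row $j$. Therefore after the subtraction, column $j$ becomes the zero vector except in row $j$, where the entry becomes
\[
4(p_j-1)-x - p_j \;=\; 3p_j - 4 - x.
\]
Column $1$ is unchanged. The resulting matrix is lower triangular with diagonal entries $p_1, 3p_2-4-x, \ldots, 3p_m-4-x$, so
\[
\det A_m \;=\; p_1 \prod_{j=2}^m (3p_j - 4 - x) \;=\; p_1 \prod_{j \neq 1}(3p_j - 4 - x),
\]
as claimed. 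Since column operations of the form $C_j \leftarrow C_j - C_1$ preserve the determinant, this establishes the formula.

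There is no serious obstacle here; the only thing to be careful about is correctly identifying the off-diagonal pattern (row $i$ has the constant value $p_i$ off the diagonal), which makes the column-subtraction trick work uniformly. If one preferred a row-based argument, the symmetric approach $R_i \leftarrow R_i - (p_i/p_1) R_1$ for $i \ge 2$ would also work (under a mild assumption on $p_1$), but the column operations above require no such caveat and go through in full generality.
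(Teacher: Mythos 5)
Your proof is correct and is essentially identical to the paper's: the authors also subtract the first column from each of the others, obtain the same lower triangular matrix with diagonal $p_1, 3p_2-4-x, \ldots, 3p_m-4-x$, and read off the determinant.
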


\begin{proof}
 By subtracting the first columns from all the other columns, the resulting matrix is of the following form
\begin{small}
$$
\left[\begin{array}{cccc}
p_1 & 0 & \cdots & 0\\
p_2 & 3p_2 - 4-x & \cdots & 0\\
\vdots & \vdots  & \ddots &\vdots\\
p_m  & 0 &  \cdots & 3p_m - 4-x\\
\end{array} \right]
$$
\end{small}
 and hence the result follows.
\end{proof}

\begin{lem}\label{Lem:Det_2}
Let $B_m$ be an $m \times m$ matrix  of the following form
\begin{small}
\begin{equation}\label{eqn:M_2}
B_m = \left[\begin{array}{cccc}
4(n_1 - 1) & n_1 & \cdots & n_1\\
n_2 & 4(n_2 - 1) & \cdots & n_2\\
\vdots & \vdots  & \ddots &\vdots\\
n_m  & n_m &  \cdots & 4(n_m - 1)\\
\end{array} \right].
\end{equation}
\end{small}
Then, 
$$\det (B_m-xI_m) = \sum_{i=1}^m \left( n_i\prod_{j \neq i}(3n_j - 4-x) \right) +  \prod_{i=1}^m (3n_i - 4-x).$$
\end{lem}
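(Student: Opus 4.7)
My plan is to prove the formula by induction on $m$, using Lemma~\ref{Lem:Det_1} as the key ingredient together with multilinearity of the determinant in the rows of $B_m - xI_m$. The observation that makes this work is that the diagonal entries of $B_m - xI_m$ can be rewritten as $4(n_i-1) - x = n_i + (3n_i - 4 - x)$, which suggests splitting a row into a constant row (matching the off-diagonal pattern) plus a row with a single nonzero entry on the diagonal.

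Concretely, I would fix $m \geq 2$ and focus on the first row of $B_m - xI_m$, which equals
$$\bigl(4(n_1-1)-x,\, n_1,\, n_1,\, \ldots,\, n_1\bigr) = \bigl(3n_1-4-x,\, 0,\, 0,\, \ldots,\, 0\bigr) + \bigl(n_1,\, n_1,\, \ldots,\, n_1\bigr).$$
By multilinearity of the determinant in the first row, $\det(B_m - xI_m) = \det(M_1) + \det(M_2)$, where $M_1$ agrees with $B_m - xI_m$ except in row $1$, whose entries are $(3n_1-4-x, 0, \ldots, 0)$, and $M_2$ agrees with $B_m - xI_m$ except its first row is $(n_1, n_1, \ldots, n_1)$. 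Expanding $\det(M_1)$ along its first row yields $(3n_1-4-x)\det(B_{m-1}^{(1)} - xI_{m-1})$, where $B_{m-1}^{(1)}$ denotes the matrix of the same form as $B_{m-1}$ built from $n_2,\ldots,n_m$. On the other hand, $M_2$ is exactly the matrix $A_m$ of Lemma~\ref{Lem:Det_1} with $p_i = n_i$ for all $i$, so that lemma gives $\det(M_2) = n_1 \prod_{j \neq 1}(3n_j - 4 - x)$. This yields the recursion
$$\det(B_m - xI_m) = (3n_1 - 4 - x)\,\det(B_{m-1}^{(1)} - xI_{m-1}) + n_1 \prod_{j=2}^m (3n_j - 4 - x).$$

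Now I would set up an induction on $m$. The base case $m=1$ is immediate: $\det(B_1 - xI_1) = 4(n_1-1) - x$, which equals $n_1 + (3n_1 - 4 - x)$, matching the right-hand side of the claimed formula. For the inductive step, I would substitute the inductive hypothesis applied to $(n_2,\ldots,n_m)$ into the recursion above. The product term $(3n_1 - 4 - x)\prod_{i=2}^m (3n_i - 4 - x)$ combines to form $\prod_{i=1}^m (3n_i - 4 - x)$, the sum terms $(3n_1 - 4 - x) \cdot n_i \prod_{j \neq i,\, j\geq 2}(3n_j - 4 - x)$ for $i = 2, \ldots, m$ become $n_i \prod_{j \neq i}(3n_j - 4 - x)$, and the leftover $n_1 \prod_{j=2}^m (3n_j - 4 - x)$ supplies the missing $i=1$ summand. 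Collecting these gives exactly the claimed expression.

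I do not expect any genuine obstacle here: the proof is a straightforward combination of a row decomposition with Lemma~\ref{Lem:Det_1} and an elementary induction, and the algebraic bookkeeping in the inductive step is the only thing that needs care. An alternative, slicker route would be to notice that $B_m - xI_m = D + \mathbf{n}\mathbf{1}^T$, where $D = \operatorname{diag}(3n_i - 4 - x)$, $\mathbf{n} = (n_1,\ldots,n_m)^T$, and $\mathbf{1}$ is the all-ones vector, and then apply the matrix-determinant lemma $\det(D + \mathbf{n}\mathbf{1}^T) = \det(D) + \mathbf{1}^T \operatorname{adj}(D)\,\mathbf{n}$, which yields the formula in one line and remains valid even when some factor $3n_i - 4 - x$ vanishes; I would likely mention this as a remark but present the inductive proof since it keeps the exposition self-contained and in line with the preceding Lemma~\ref{Lem:Det_1}.
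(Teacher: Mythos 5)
Your proof is correct and takes essentially the same route as the paper: induction on $m$, an expansion along the first row in which Lemma~\ref{Lem:Det_1} supplies the constant-row contribution, and the same recombination of terms in the inductive step. The only cosmetic difference is that you split the first row by multilinearity into $(3n_1-4-x,0,\dots,0)$ plus $(n_1,\dots,n_1)$, which yields a slightly cleaner recursion than the paper's raw cofactor expansion; your closing remark about the matrix-determinant lemma is a valid shortcut but not needed.
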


\begin{proof}
We prove the lemma using induction on the order of the matrix. For $m=1$, the result is true. Let us assume the result is true for matrix order $m-1$ of similar form. Now expanding along the first row and using Lemma~\ref{Lem:Det_1}, we get $$\det (B_m-xI_m) = 4(n_1-1) \det (B_{m-1}-xI_{m-1}) -n_1 \sum_{i=2}^{m} \left( n_i\prod_{j \neq i}(3n_j - 4-x) \right)$$ and hence the result follows.
\end{proof}

\begin{prop}\label{prop:char_eqn}
Let $G$ be a complete $t$-partite graph $K_{n_1,n_2,\cdots,n_t}$ on $n=\sum_{i=1}^t n_i$ vertices and $\Delta(G)$ be the squared distance matrix of $G$.  Then, the characteristics polynomial of $\Delta(G)$ is given by
\begin{equation}\label{eqn:char_eqn}
P_{\Delta}(G,x)=(x+4)^{n-t}\left[  \prod_{i=1}^t (x+4-3n_i) -\sum_{i=1}^t \Bigg( n_i\prod_{j \neq i}(x+4-3n_j) \Bigg) \right].
\end{equation}
\end{prop}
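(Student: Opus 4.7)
The plan is to reduce $\det(\Delta(G) - xI_n)$ to a $t\times t$ determinant of the form handled by Lemma~\ref{Lem:Det_2}, using elementary row and column operations on the block matrix in~\eqref{eqn:D(K)} and peeling off the predicted factor $(x+4)^{n-t}$ along the way.

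As a first step, inside each part $V_k$ I would subtract column $1$ of $V_k$ from every other column of $V_k$. Because every off-diagonal entry of the diagonal block $V_k \times V_k$ equals $4$ and every entry of the off-diagonal block $V_k \times V_\ell$ equals $1$, each modified column $j \in \{2, \ldots, n_k\}$ becomes a sparse column whose only nonzero entries are $(x+4)$ at row $1$ of $V_k$ and $-(x+4)$ at row $j$ of $V_k$. Factoring $(x+4)$ out of each of the $\sum_{k=1}^t (n_k - 1) = n - t$ such columns extracts the anticipated $(x+4)^{n-t}$ prefactor.

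Next, I would clean up row $1$ of each part by adding rows $2, \ldots, n_k$ of $V_k$ to row $1$ of $V_k$, for every $k$. A direct computation shows that this operation zeroes out every entry of row $1$ of $V_k$ except at the $t$ reference columns (column $1$ of each part), where the entry in column $1$ of $V_k$ becomes $4(n_k - 1) - x$ and the entry in column $1$ of $V_\ell$ for $\ell \neq k$ becomes $n_k$. After permuting rows and columns so that the reference positions come first, the matrix becomes block-lower-triangular with a $t \times t$ leading block equal to $B_t - xI_t$ and an $(n-t) \times (n-t)$ trailing block equal to $-I_{n-t}$; hence its determinant is $(-1)^{n-t}\det(B_t - xI_t)$, and
\[
  \det(\Delta(G) - xI_n) \;=\; \bigl[-(x+4)\bigr]^{n-t}\det(B_t - xI_t).
\]

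Finally, I would invoke Lemma~\ref{Lem:Det_2} to expand $\det(B_t - xI_t)$, use the identity $3n_j - 4 - x = -(x + 4 - 3n_j)$ inside each factor to absorb a $(-1)^t$, and multiply through by $(-1)^n$ to pass to $P_\Delta(G,x) = \det(xI_n - \Delta(G))$. All the sign factors cancel, yielding~\eqref{eqn:char_eqn}. The main obstacle I anticipate is purely the sign bookkeeping when shuttling between $\det(\Delta(G) - xI_n)$, $P_\Delta(G,x)$, and $\det(B_t - xI_t)$; Lemmas~\ref{Lem:Det_1} and~\ref{Lem:Det_2} have already absorbed the combinatorial content.
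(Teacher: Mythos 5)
Your proposal is correct and follows essentially the same route as the paper: the same within-part column subtractions, the same row additions to the first row of each part, the same symmetric permutation producing a block-triangular matrix with leading block $B_t - xI_t$, and the same appeal to Lemma~\ref{Lem:Det_2} with the sign bookkeeping $3n_j-4-x = -(x+4-3n_j)$. The only (cosmetic) difference is that you factor $(x+4)$ out of the $n-t$ sparse columns immediately, whereas the paper leaves those entries in place and reads off the factor from the trailing block $-(x+4)I_{n-t}$.
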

\begin{proof}
Let $\widetilde{\Delta}(G)= \Delta(G) -xI_n$. We use elementary row and column operations to prove the result. For each partition of the vertex set we subtract the first column from the remaining columns and then add all the rows of each partition to the first row. Next, we move the first column of each partition to the first $t$-columns followed by the same operation on the rows. Then, the resulting matrix has the following block form $$
\left[
\begin{array}{c|c}
B_t - xI_t& \mathbf{0} \\
\hline
* & -(x+4)I_{n-t}
\end{array}
\right],
$$ where $B_t$ is the matrix as defined in Eqn~(\ref{eqn:M_2}).
Thus,   $$\det(\Delta(G) -xI)=(-1)^{n-t} (x+4)^{n-t} \det(B_t - xI_t)$$ and from Lemma~\ref{Lem:Det_2}, we have
{\small
\begin{align*}
\det(\Delta(G) -xI_n)&= (-1)^{n-t} (x+4)^{n-t} \left[\sum_{i=1}^t \Bigg( n_i \prod_{j \neq i}(3n_j-4-x)\Bigg) + \prod_{i=1}^t (3n_i-4-x) \right]\\
                   &=(-1)^{n-t} (x+4)^{n-t} \left[ (-1)^{t-1}\sum_{i=1}^t \Bigg( n_i \prod_{j \neq i}(x+4-3n_j)\Bigg) + (-1)^{t}\prod_{i=1}^t (x+4-3n_i) \right]\\
                   &=(-1)^{n} (x+4)^{n-t} \left[  \prod_{i=1}^t (x+4-3n_i) -\sum_{i=1}^t \Bigg( n_i\prod_{j \neq i}(x+4-3n_j) \Bigg) \right],
\end{align*}}
and hence the result follows.
\end{proof}
\begin{rem}\label{rem:monic-char1}
 From  the proof of Proposition~\ref{prop:char_eqn}, it is easy to see that  
$$\det(xI_t - B_t)=  \prod_{i=1}^t (x+4-3n_i) -  \sum_{i=1}^t \Bigg( n_i\prod_{j \neq i}(x+4-3n_j)\Bigg).$$
\end{rem}
\begin{cor}\label{cor:detD_single}
Let $G$ be a complete $t$-partite graph $K_{n_1,n_2,\cdots,n_t}$ on $n=\sum_{i=1}^t n_i$ vertices and $\Delta(G)$ be the squared distance matrix of $G$.  Then, the determinant of $\Delta(G)$ is given by
$$\det \Delta(G) = (-4)^{n-t} \left[  \sum_{i=1}^t \Bigg( n_i\prod_{j \neq i}(3n_j-4) \Bigg) + \prod_{i=1}^t (3n_i-4) \right].$$
\end{cor}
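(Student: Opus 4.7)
The plan is to obtain $\det \Delta(G)$ as an immediate specialization of the characteristic polynomial computed in Proposition~\ref{prop:char_eqn}. Recall that the proof of that proposition actually produced the intermediate identity
$$\det(\Delta(G)-xI_n) = (-1)^n (x+4)^{n-t}\Bigl[\prod_{i=1}^t(x+4-3n_i) - \sum_{i=1}^t n_i\prod_{j\neq i}(x+4-3n_j)\Bigr],$$
so setting $x=0$ will give the determinant directly, modulo a bit of sign bookkeeping.

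First I would substitute $x=0$ to obtain
$$\det\Delta(G) = (-1)^n \, 4^{n-t}\Bigl[\prod_{i=1}^t(4-3n_i) - \sum_{i=1}^t n_i\prod_{j\neq i}(4-3n_j)\Bigr].$$
Next I would extract the sign from each factor $4-3n_k = -(3n_k-4)$. The first product contributes a factor $(-1)^t$, while each summand in the sum has $t-1$ such factors and therefore contributes $(-1)^{t-1}$. Hence the bracket becomes
$$(-1)^t\prod_{i=1}^t(3n_i-4) - (-1)^{t-1}\sum_{i=1}^t n_i\prod_{j\neq i}(3n_j-4) = (-1)^t\Bigl[\prod_{i=1}^t(3n_i-4) + \sum_{i=1}^t n_i\prod_{j\neq i}(3n_j-4)\Bigr].$$

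Finally, combining the overall prefactor $(-1)^n \cdot 4^{n-t} \cdot (-1)^t = (-1)^{n-t} 4^{n-t} = (-4)^{n-t}$ yields exactly the claimed expression. There is essentially no obstacle beyond keeping track of these signs; the whole argument is a one-line corollary once one has Proposition~\ref{prop:char_eqn} in hand, and it could equivalently be phrased as evaluating $P_\Delta(G,0) = (-1)^n \det\Delta(G)$ and solving for $\det\Delta(G)$.
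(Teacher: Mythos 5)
Your proof is correct and is exactly the intended derivation: the paper states this corollary without proof precisely because it follows by evaluating the identity $\det(\Delta(G)-xI_n)=(-1)^n(x+4)^{n-t}\bigl[\prod_{i}(x+4-3n_i)-\sum_i n_i\prod_{j\neq i}(x+4-3n_j)\bigr]$ from the proof of Proposition~\ref{prop:char_eqn} at $x=0$ and tracking the signs $4-3n_k=-(3n_k-4)$, which you do accurately. Nothing further is needed.
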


\begin{cor}\label{cor:char-reduce}
Let $t=s+h\geq 2$ and   $n_1,n_2,\cdots,n_t$ be positive integers  such that $n_i \geq 2$ for $1 \leq i\leq s$  and $n_i=1$ for $s+1 \leq i\leq t=s+h$. Let $\Delta(G)$ be the squared distance matrix of the complete $t$-partite graph $G=K_{n_1,n_2,\cdots,n_t}$ on $n=\sum_{i=1}^t n_i$ vertices. Then, the characteristic polynomial of $\Delta(G)$ is given by $P_{\Delta}(G,x)=(x+4)^{n-t}(x+1)^{h-1} p(G,x),$ where
\begin{align}\label{eqn:char-reduce-monic1}
p(G,x)&= (x+1-h) \prod_{i=1}^s (x+4-3n_i)-(x+1)\sum_{i=1}^s \Bigg( n_i\prod_{j \neq i}(x+4-3n_j)  \Bigg)\\
&= (x+1) \det(xI-B_s)- h  \prod_{i=1}^s (x+4-3n_i),\nonumber 
\end{align}
where $B_s$ is the matrix  defined in Eqn~(\ref{eqn:M_2}) with parameters $n_1, n_2,\cdots, n_s$.
\end{cor}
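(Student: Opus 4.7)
The plan is to derive Corollary~\ref{cor:char-reduce} directly from the formula
\[
P_{\Delta}(G,x)=(x+4)^{n-t}\left[\prod_{i=1}^t(x+4-3n_i)-\sum_{i=1}^t n_i\prod_{j\neq i}(x+4-3n_j)\right]
\]
in Proposition~\ref{prop:char_eqn} by exploiting the fact that the indices $i\in\{s+1,\ldots,t\}$ contribute the factor $x+4-3\cdot 1=x+1$. So the strategy is purely bookkeeping: separate the index set $\{1,\ldots,t\}$ into the ``big'' block $\{1,\ldots,s\}$ (where $n_i\geq 2$) and the ``unit'' block $\{s+1,\ldots,t\}$ (of size $h$), and count how many copies of $(x+1)$ each term supplies, then factor out the common power $(x+1)^{h-1}$.

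Concretely, I would first write $\prod_{i=1}^t(x+4-3n_i)=(x+1)^h\prod_{i=1}^s(x+4-3n_i)$. Then for the sum $\sum_{i=1}^t n_i\prod_{j\neq i}(x+4-3n_j)$ I would split the range of $i$: when $1\leq i\leq s$ the omitted index leaves all $h$ factors of $(x+1)$ intact, so that piece equals $(x+1)^h\sum_{i=1}^s n_i\prod_{\substack{j\neq i\\ j\leq s}}(x+4-3n_j)$; when $s+1\leq i\leq t$ one copy of $(x+1)$ is removed and $n_i=1$, so those $h$ terms collapse into $h\,(x+1)^{h-1}\prod_{k=1}^s(x+4-3n_k)$. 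Substituting into the bracketed expression and pulling out the common factor $(x+1)^{h-1}$ yields
\[
P_{\Delta}(G,x)=(x+4)^{n-t}(x+1)^{h-1}\Bigl[(x+1-h)\prod_{i=1}^s(x+4-3n_i)-(x+1)\sum_{i=1}^s n_i\prod_{\substack{j\neq i\\ j\leq s}}(x+4-3n_j)\Bigr],
\]
which is the first displayed form of $p(G,x)$.

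For the second form, I would invoke Remark~\ref{rem:monic-char1} applied to the $s\times s$ matrix $B_s$ built from the parameters $n_1,\ldots,n_s$, which gives
\[
\det(xI_s-B_s)=\prod_{i=1}^s(x+4-3n_i)-\sum_{i=1}^s n_i\prod_{\substack{j\neq i\\ j\leq s}}(x+4-3n_j).
\]
Multiplying this identity by $(x+1)$ and subtracting $h\prod_{i=1}^s(x+4-3n_i)$ reproduces exactly the first form of $p(G,x)$, which establishes the equivalence of the two expressions.

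The argument is entirely algebraic and contains no real obstacle; the only point requiring care is the correct exponent of $(x+1)$ in each piece of the sum (namely $h$ for $i\leq s$ versus $h-1$ for $i>s$), so that the extracted common factor is $(x+1)^{h-1}$ rather than $(x+1)^h$. Everything else is symbolic manipulation plus a direct appeal to Remark~\ref{rem:monic-char1}.
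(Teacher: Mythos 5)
Your proposal is correct and follows essentially the same route as the paper: substitute $n_i=1$ for $i>s$ into the formula of Proposition~\ref{prop:char_eqn}, split the product and the sum according to whether the index lies in the big block or the unit block (tracking the exponent $h$ versus $h-1$ of $(x+1)$), factor out $(x+1)^{h-1}$, and then invoke Remark~\ref{rem:monic-char1} for the $\det(xI-B_s)$ form. No gaps.
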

\begin{proof}
Substituting  $n_i=1$ for $s+1 \leq i\leq t=s+h$ in  Eqn.~\eqref{eqn:char_eqn},  the characteristic polynomial $P_{\Delta}(G,x)$ of $\Delta(G)$ reduces to
{\small
\begin{align}\label{eqn:ch-poly-1}
P_{\Delta}(G,x)&=(x+4)^{n-t}\left[  \prod_{i=1}^t (x+4-3n_i) -\sum_{i=1}^t \Bigg( n_i\prod_{j \neq i}(x+4-3n_j) \Bigg) \right] \nonumber\\
&= (x+4)^{n-t} \Bigg[ (x+1)^h \prod_{i=1}^s (x+4-3n_i) - (x+1)^h \sum_{i=1}^s \Bigg( n_i\prod_{j \neq i}(x+4-3n_j) \Bigg)  \nonumber\\
& \hspace*{9.05cm} \left. - h (x+1)^{h-1} \prod_{i=1}^s (x+4-3n_i)  \right]  \nonumber\\
&= (x+4)^{n-t} (x+1)^{h-1} \Bigg[ (x+1) \left( \prod_{i=1}^s (x+4-3n_i) -  \sum_{i=1}^s \Bigg( n_i\prod_{j \neq i}(x+4-3n_j)  \Bigg) \right)  \nonumber\\
& \hspace*{10.7cm} \left. - h  \prod_{i=1}^s (x+4-3n_i)  \right]\\
&=(x+4)^{n-t}(x+1)^{h-1}\left[(x+1-h) \prod_{i=1}^s (x+4-3n_i)-(x+1)\sum_{i=1}^s \Bigg( n_i\prod_{j \neq i}(x+4-3n_j)  \Bigg) \right]. \nonumber
\end{align}}
Further, in view of Remark~\ref{rem:monic-char1},   Eqn.~\eqref{eqn:ch-poly-1} can be written as 
$$P_{\Delta}(G,x)=(x+4)^{n-t} (x+1)^{h-1} \Bigg[ (x+1) \det(xI-B_s)- h  \prod_{i=1}^s (x+4-3n_i) \Bigg].$$
This completes the proof.
\end{proof}

\section{Inertia and Energy of $\Delta(K_{n_1,n_2,\cdots,n_t})$}\label{sec:inertia-n-energy}
In this section, we  find the inertia of the squared  distance matrix of the complete multipartite graph $K_{n_1,n_2,\cdots,n_t}$ and compute the squared  distance energy $E_{\Delta}(K_{n_1,n_2,\cdots,n_t})$. We  first consider the complete multipartite graph $K_{n_1,n_2,\cdots,n_t}$ whenever $n_i\geq 2$ for $1\leq i\leq t$.

\begin{theorem}\label{thm:ni-geq-2}
Let $t\geq 2$ and $n_1,n_2,\cdots,n_t$ be positive integers  such that $n_i \geq 2$ for $1 \leq i\leq t$. 
Let $G$ be the complete $t$-partite graph $K_{n_1,n_2,\cdots,n_t}$ on $n=\sum_{i=1}^t n_i$ vertices and $\Delta(G)$ be the squared distance matrix of $G$.  Then, the inertia of $\Delta(G)$ is given by
$$\textup{In}(\Delta(G))=(t, 0, n-t)$$ and the squared distance energy  of $K_{n_1,n_2,\cdots,n_t}$ is given by $$E_{\Delta}(G)= 8(n-t).$$
\end{theorem}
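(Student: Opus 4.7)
The plan is to exploit the factorization of the characteristic polynomial provided by Proposition~\ref{prop:char_eqn} and Remark~\ref{rem:monic-char1}, which already hands us $n-t$ of the eigenvalues on a platter, and then to identify the remaining $t$ eigenvalues as the spectrum of a positive definite matrix whenever $n_i\geq 2$ for all $i$.

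\textbf{Step 1 (factor).} By Proposition~\ref{prop:char_eqn} and Remark~\ref{rem:monic-char1},
$$P_{\Delta}(G,x)=(x+4)^{n-t}\det(xI_t-B_t),$$
so $-4$ is an eigenvalue of $\Delta(G)$ with multiplicity at least $n-t$, and the remaining $t$ eigenvalues (counted with multiplicity) are precisely the eigenvalues of the $t\times t$ matrix $B_t$ defined in~\eqref{eqn:M_2}.

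\textbf{Step 2 (symmetrize $B_t$).} Writing $\mathbf{n}=(n_1,\ldots,n_t)^T$, a direct check shows $B_t=\mathbf{n}\mathbf{1}^T+\operatorname{diag}(3n_i-4)$, and the diagonal conjugation by $\operatorname{diag}(\sqrt{n_i})$ turns $B_t$ into the \emph{symmetric} matrix
$$\widetilde{M}\;=\;\sqrt{\mathbf{n}}\,\sqrt{\mathbf{n}}^T+\operatorname{diag}(3n_i-4),\qquad \widetilde{M}_{ki}=\sqrt{n_kn_i}+(3n_i-4)\delta_{ki}.$$
Thus $B_t$ and $\widetilde{M}$ share the same spectrum.

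\textbf{Step 3 (positive definiteness).} Because $n_i\geq 2$ for every $i$, we have $3n_i-4\geq 2>0$, so $\operatorname{diag}(3n_i-4)$ is positive definite; adding the rank-one PSD matrix $\sqrt{\mathbf{n}}\sqrt{\mathbf{n}}^T$ keeps it positive definite. Hence all $t$ eigenvalues of $\widetilde{M}$ (equivalently of $B_t$) are strictly positive. In particular $\det(xI_t-B_t)$ does not vanish at $x=-4$ or at $x=0$, so the multiplicity of $-4$ in $P_\Delta(G,x)$ is \emph{exactly} $n-t$, and $0$ is not an eigenvalue. This establishes $\textup{In}(\Delta(G))=(t,0,n-t)$.

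\textbf{Step 4 (energy).} Since $\operatorname{tr}\Delta(G)=0$, the sum of the positive eigenvalues equals the absolute value of the sum of the negative eigenvalues. The negative eigenvalues contribute $(n-t)\cdot(-4)$, hence
$$E_{\Delta}(G)\;=\;2\sum_{\lambda_i<0}|\lambda_i|\;=\;2\cdot 4(n-t)\;=\;8(n-t).$$
The only nontrivial ingredient is Step 3, i.e.\ recognizing $B_t$ as a symmetrizable positive-definite matrix under the hypothesis $n_i\geq 2$; everything else is an immediate consequence of the characteristic polynomial already computed in Section~\ref{sec:char-poly}.
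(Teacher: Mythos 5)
Your proof is correct, but it takes a genuinely different route from the paper. The paper works directly with the matrix identity $\Delta(G)+4I_n=Diag(3J_{n_1},\dots,3J_{n_t})+J_n$ (Eqn.~\eqref{eqn:rank-pert1}), applies the rank-one interlacing inequalities of Lemma~\ref{lem:rank-pert-ev} to locate the top $t$ eigenvalues of $\Delta(G)+4I_n$ above $3n_t\geq 6$, and then uses a rank argument (the column space of $J_n$ lies inside that of the block-diagonal part, so $\Delta(G)+4I_n$ has rank $t$) to force the remaining $n-t$ eigenvalues to be exactly $0$. You instead start from the factorization $P_{\Delta}(G,x)=(x+4)^{n-t}\det(xI_t-B_t)$ of Proposition~\ref{prop:char_eqn} and Remark~\ref{rem:monic-char1}, observe that $B_t=\mathbf{n}\mathbf{1}^T+\operatorname{diag}(3n_i-4)$ is diagonally similar to the symmetric matrix $\sqrt{\mathbf{n}}\sqrt{\mathbf{n}}^T+\operatorname{diag}(3n_i-4)$, and conclude positivity of $\sigma(B_t)$ from positive definiteness; I checked the identification of $B_t$ and the conjugation by $\operatorname{diag}(\sqrt{n_i})$, and both are right. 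Your route buys two things: it avoids the rank computation entirely (the exact multiplicity of $-4$ falls out of the polynomial factorization once $-4\notin\sigma(B_t)$), and it delivers Corollary~\ref{cor:monic-char} (positivity of the eigenvalues of $B_t$) as a direct byproduct rather than as a consequence of the theorem, which is arguably cleaner since later lemmas lean on that corollary. What the paper's argument buys in exchange is independence from the Section~\ref{sec:char-poly} determinant computations: the interlacing proof needs only the block decomposition of $\Delta(G)$ itself. Both are complete; yours is a legitimate alternative.
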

\begin{proof}
Let $G=K_{n_1,n_2,\cdots,n_t}$. Without loss of generality, let us assume $n_1 \ge n_2 \ge \cdots \ge n_t \geq 2$. Using Eqn.~\eqref{eqn:D(K)}, we have
\begin{equation}\label{eqn:rank-pert1}
\Delta(G) + 4 I_n = Diag \left( 3J_{n_1},3J_{n_2}, \dots , 3J_{n_t} \right)+J_{n}.
\end{equation}
For  $1\leq i \leq t$, the eigenvalues of $J_{n_i}$ are $n_i$ with  multiplicity $1$ and $0$ with  multiplicity $n_i-1.$ Therefore, the eigenvalues of $Diag \left( 3J_{n_1},3J_{n_2}, \dots , 3J_{n_t} \right)$ are $3n_1,3n_2,\ldots, 3n_t$ each with multiplicity $1$ and $0$ with  multiplicity $n-t$. 

 Let  $\lambda_1\geq \lambda_2 \geq \cdots \geq \lambda_n $ be the eigenvalues of  $\Delta(G) + 4 I_n$. From Eqn.~\eqref{eqn:rank-pert1}, $\Delta(G) + 4 I_n$ is a rank one  perturbation of $Diag \left( J_{n_1},J_{n_2}, \dots , J_{n_t} \right)$. Then, by Lemma~\ref{lem:rank-pert-ev}, we have
$$\lambda_1 \geq 3n_1 \geq \lambda_2 \geq 3n_2 \geq \cdots \geq \lambda_t \geq 3n_t \geq  \lambda_{t+1}\geq 0 \geq  \lambda_{t+2}\geq  0 \geq \cdots \geq  \lambda_n \geq 0. $$
Since $n_i\geq 2$ for $1\leq i \leq t$, so  $\lambda_1, \lambda_2,\cdots, \lambda_t \geq 6 $. Moreover,  Eqn.~\eqref{eqn:rank-pert1} yields that the rank of  $\Delta(G)+4 I_n$ is $t$ and hence $\lambda_i =0$ for $t+1\leq i \leq n$. Therefore, the eigenvalues of  $\Delta(G)$ are $\lambda_1 -4,\lambda_2 -4,\cdots, \lambda_t -4$ each with  multiplicity $1$ and $-4$ with multiplicity $n-t$.  This implies that   $\lambda_1 -4,\lambda_2 -4,\cdots, \lambda_t -4$ are all  positive  eigenvalues of $\Delta(G)$, and  the only negative eigenvalue of $\Delta(G)$ is $-4$ with multiplicity $n-t$. Hence,  $\textup{In}(\Delta(G))=(t, 0, n-t)$  and the squared distance energy  of $G$  is given by $$E_{\Delta}(G)=2(4(n-t)) =8(n-t).$$
This completes the proof.
\end{proof}
\begin{cor}\label{cor:monic-char}
Let $t\geq 2$ and $n_1,n_2,\cdots,n_t$ be positive integers  such that  $n_i\geq 2$ for $1\leq i\leq t$. If $B_t$ is the matrix  defined in Eqn~(\ref{eqn:M_2}) with parameters $n_1,n_2,\cdots,n_t$, then  the eigenvalues of $B_t$ are positive.
\end{cor}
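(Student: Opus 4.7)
The plan is to read off the conclusion directly from Proposition~\ref{prop:char_eqn} and Theorem~\ref{thm:ni-geq-2}, once the eigenvalues of $B_t$ are identified with a specific subset of the spectrum of $\Delta(G)$.

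First I would recall from the proof of Proposition~\ref{prop:char_eqn} the factorization
$$P_{\Delta}(G,x) \;=\; (x+4)^{n-t}\,\det(xI_t - B_t),$$
where $\det(xI_t - B_t)$ is a polynomial of degree exactly $t$. In particular, the $n$ eigenvalues of $\Delta(G)$ (counted with multiplicity) are the $n-t$ copies of $-4$ coming from the first factor together with the $t$ roots of $\det(xI_t - B_t)$. Although $B_t$ itself is not symmetric, its characteristic polynomial has real coefficients, and since every root of $P_{\Delta}(G,x)$ is real (as $\Delta(G)$ is a real symmetric matrix), the $t$ roots of $\det(xI_t - B_t)$ must all be real as well.

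Next I would invoke Theorem~\ref{thm:ni-geq-2}, which establishes $\textup{In}(\Delta(G)) = (t,0,n-t)$: there are $t$ positive eigenvalues and the eigenvalue $-4$ occurs with multiplicity exactly $n-t$. The key observation is that the factor $(x+4)^{n-t}$ already accounts for the full multiplicity of $-4$; hence $-4$ cannot be a root of $\det(xI_t - B_t)$, for otherwise the multiplicity of $-4$ in $P_{\Delta}(G,x)$ would exceed $n-t$, contradicting Theorem~\ref{thm:ni-geq-2}. Similarly, $0$ cannot be a root of $\det(xI_t-B_t)$, since $\Delta(G)$ has no zero eigenvalue.

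Putting these observations together, the $t$ real roots of $\det(xI_t - B_t)$ must coincide (with multiplicity) with the $t$ positive eigenvalues of $\Delta(G)$ guaranteed by Theorem~\ref{thm:ni-geq-2}. Since these are exactly the eigenvalues of $B_t$, all eigenvalues of $B_t$ are positive, which is the desired conclusion. There is essentially no obstacle here: the corollary is a bookkeeping consequence of the polynomial factorization and the inertia count already proved, and no new estimate or construction is required.
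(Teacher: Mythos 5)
Your argument is correct and is essentially the paper's own proof: both identify the $t$ roots of $\det(xI_t-B_t)$ with the $t$ positive eigenvalues of $\Delta(G)$ via the factorization $P_{\Delta}(G,x)=(x+4)^{n-t}\det(xI_t-B_t)$ from Proposition~\ref{prop:char_eqn} and Remark~\ref{rem:monic-char1}, together with the inertia count $\textup{In}(\Delta(G))=(t,0,n-t)$ from Theorem~\ref{thm:ni-geq-2}. Your extra bookkeeping about why $-4$ and $0$ cannot be roots of $\det(xI_t-B_t)$ just makes explicit what the paper leaves implicit.
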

\begin{proof}
 Let $G=K_{n_1,n_2,\cdots,n_t}$ and $n=\sum_{i=1}^t n_i$. If $\lambda_1(G)\geq \lambda_2(G)\geq \cdots \geq \lambda_n(G) $ are the eigenvalues of $\Delta(G)$, then by Theorem~\ref{thm:ni-geq-2}, 
  $-4$ is an eigenvalue of $\Delta(G)$ with multiplicity $n-t$ and  $\lambda_i(G)>0$ for $1\leq i\leq t$. Therefore, in view of Proposition~\ref{prop:char_eqn} and Remark~\ref{rem:monic-char1}, we get $\lambda_i(G)$ for $1\leq i\leq t$ are the roots of $\det(xI_t - B_t)= 0$ and hence the result follows.
\end{proof}

Now we prove a few lemmas useful to find the inertia of $\Delta(K_{n_1,n_2,\cdots,n_t})$ and   $E_{\Delta}(K_{n_1,n_2,\cdots,n_t})$, whenever $K_{n_1,n_2,\cdots,n_t}$ belongs to  the class  $\mathcal{M}(n,t,h)$.

\begin{lem}\label{lem:f-neq-0}
 Let $n_1,n_2,\cdots,n_s$ be positive integers  such that   $n_i\geq 2$ for $1\leq i\leq s$ and $h$ be a positive integer. Let $B_s$ be the matrix  defined in Eqn~(\ref{eqn:M_2}) with parameters $n_1,n_2,\cdots,n_s$.  If  $f(x)= (x+1) \det(xI_s-B_s)- h  \prod_{i=1}^s (x+4-3n_i)$, then $f(x)\neq 0$ for   $x\leq -1.$ 
\end{lem}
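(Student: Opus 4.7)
The plan is to prove the lemma by a direct sign analysis of the two terms in $f(x)$, exploiting the fact that the eigenvalues of $B_s$ have already been pinned down to be positive by Corollary~\ref{cor:monic-char}. Since $n_i \geq 2$ for $1\leq i\leq s$ and $B_s$ is built from these parameters, Corollary~\ref{cor:monic-char} applies and tells us $\det(xI_s - B_s) = \prod_{i=1}^s (x - \lambda_i(B_s))$ with every $\lambda_i(B_s) > 0$. This is the key ingredient, and it makes the whole argument essentially a sign bookkeeping exercise rather than a computation.

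Concretely, I would fix $x \leq -1$ and determine the sign of each of the three factors that appear in $f(x)$. First, since each $\lambda_i(B_s) > 0$, every factor $x - \lambda_i(B_s)$ is strictly negative for $x \leq -1$, so $\det(xI_s - B_s)$ is strictly nonzero and has the sign of $(-1)^s$. Second, since $n_i \geq 2$, we have $x + 4 - 3n_i \leq -1 + 4 - 6 = -3 < 0$ for every $i$, so the product $\prod_{i=1}^s (x+4-3n_i)$ is also strictly nonzero and has the sign of $(-1)^s$. Third, $(x+1) \leq 0$ with equality only at $x = -1$.

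Putting these together: for $x < -1$, the summand $(x+1)\det(xI_s - B_s)$ is strictly nonzero with sign $(-1)^{s+1}$, while $h\prod_{i=1}^s (x+4-3n_i)$ is strictly nonzero with sign $(-1)^s$; these two signs are opposite, so $f(x)$ is the difference of two nonzero real numbers of opposite signs, hence $f(x)\neq 0$. At the boundary $x=-1$, the first summand vanishes, so
\begin{equation*}
f(-1) = -h\prod_{i=1}^s (3 - 3n_i) = -h\cdot 3^s\prod_{i=1}^s (1-n_i),
\end{equation*}
which is a nonzero integer because $h\geq 1$ and each $1 - n_i \leq -1$. This completes the case analysis and establishes $f(x)\neq 0$ for all $x \leq -1$.

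I do not foresee a genuine obstacle here: the only substantive input is Corollary~\ref{cor:monic-char} (all eigenvalues of $B_s$ are positive), and once that is invoked the rest is a strict inequality check factor by factor. The one thing to be careful about is not to conflate "non-positive" with "negative" at the boundary $x=-1$, which is why I would treat the case $x=-1$ separately from $x<-1$.
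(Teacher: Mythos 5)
Your proof is correct and follows essentially the same route as the paper: both arguments invoke Corollary~\ref{cor:monic-char} to get positivity of the eigenvalues of $B_s$, then compare the signs $(-1)^{s+1}$ of $(x+1)\det(xI_s-B_s)$ and $(-1)^{s}$ of $h\prod_{i=1}^s(x+4-3n_i)$ for $x<-1$, and handle $x=-1$ by noting the first term vanishes while the second does not. Your explicit evaluation of $f(-1)$ is a slightly more careful treatment of the boundary case than the paper's, but the substance is identical.
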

\begin{proof}
For   $x\leq -1$ and  $n_i\geq 2$ for $1\leq i\leq s$, we have $x+4-3n_i <0$  and hence the sign of $-h  \prod_{i=1}^s (x+4-3n_i)$ is $(-1)^{s+1}$.  By  Corollary~\ref{cor:monic-char}, the eigenvalues  of $B_s$ are positive which implies that  the sign of $(x+1) \det(xI_s-B_s)$ is $(-1)^{s+1}$ for $x< -1$ and the value of $(x+1) \det(xI_s-B_s)$ is zero for $x=1$. Therefore,  both  $(x+1) \det(xI_s-B_s)$ and $-h  \prod_{i=1}^s (x+4-3n_i)$ are of the  same sign and hence $f(x)\neq 0$ for   $x\leq -1.$
\end{proof}

\begin{lem}\label{lem:eigen-sq-dist}
Let $t=s+h\geq 2$ and   $n_1,n_2,\cdots,n_t$ be positive integers  such that  $n_i\geq 2$ for $1\leq i\leq s$ and $n_i=1$ for $s+1 \leq i\leq t=s+h$. 
Let $G$ be the complete $t$-partite graph $K_{n_1,n_2,\cdots,n_t}$ on $n=\sum_{i=1}^t n_i$ vertices and $\Delta(G)$ be the squared distance matrix of $G$. If $\lambda_1(G)\geq \lambda_2(G)\geq \cdots \geq \lambda_n(G) $ are the eigenvalues of $\Delta(G)$, then 
\begin{enumerate}
\item[$(i)$]  $\lambda_i(G)>0$ for $1\leq i\leq s.$
\item[$(ii)$]   $\lambda_{s+1}(G)>0  \textup{ if and only if  } h-1 > \ds\sum_{i=1}^s \dfrac{n_i}{3n_i-4},$ $\lambda_{s+1}(G)=0  \textup{ if and only if  } h-1 = \ds\sum_{i=1}^s \dfrac{n_i}{3n_i-4}$  and  $\lambda_{s+1}(G)<0  \textup{ if and only if  } h-1 < \ds\sum_{i=1}^s \dfrac{n_i}{3n_i-4}.$ 

Moreover, if $\lambda_{s+1}(G) <0,$ then $ -1<\lambda_{s+1}(G) <0.$
\item[$(iii)$]   $\lambda_i(G)=-1$ for $s+2\leq i\leq s+h$ and $\lambda_i(G)=-4$ for $s+h+1\leq i\leq n.$
\end{enumerate}
\end{lem}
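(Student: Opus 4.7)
My plan is to exploit the factorization of $P_{\Delta}(G,x)$ given in Corollary~\ref{cor:char-reduce},
$$P_{\Delta}(G,x)=(x+4)^{n-t}(x+1)^{h-1}\,p(G,x),$$
where $p(G,x)=(x+1)\det(xI_s-B_s)-h\prod_{i=1}^s(x+4-3n_i)$ is monic of degree $s+1$. Part~$(iii)$ comes almost for free: Lemma~\ref{lem:f-neq-0} says $p(G,x)\neq 0$ for $x\leq -1$, so $p(G,\cdot)$ contributes no further root at $-1$ or at $-4$. Consequently the multiplicities of $-4$ and $-1$ as eigenvalues of $\Delta(G)$ are exactly $n-t$ and $h-1$, matching the positions $\lambda_{s+h+1}(G),\ldots,\lambda_n(G)=-4$ and $\lambda_{s+2}(G),\ldots,\lambda_{s+h}(G)=-1$, while the remaining $s+1$ eigenvalues, which I will identify as $\lambda_1(G),\ldots,\lambda_{s+1}(G)$, lie strictly above $-1$.

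For part~$(i)$ I would invoke the inclusion principle. The squared distance matrix $\Delta(K_{n_1,\ldots,n_s})$ of the subgraph obtained by dropping the $h$ singleton classes is a genuine principal submatrix of $\Delta(G)$, since the pairwise distances among the retained vertices in $G$ coincide with those in $K_{n_1,\ldots,n_s}$. Applying Theorem~\ref{thm:ni-geq-2} to $K_{n_1,\ldots,n_s}$ produces $s$ strictly positive eigenvalues for this submatrix, and Theorem~\ref{thm:inter} then forces $\lambda_i(G)\geq \lambda_i(\Delta(K_{n_1,\ldots,n_s}))>0$ for $1\leq i\leq s$.

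For part~$(ii)$, parts~$(i)$ and~$(iii)$ leave only the sign of the remaining root $\lambda_{s+1}(G)$ of $p(G,x)$ in $(-1,\infty)$ to be determined. Using Remark~\ref{rem:monic-char1} to expand $\det(xI_s-B_s)$ and then dividing $p(G,x)$ through by $(x+1)\prod_{i=1}^s(x+4-3n_i)$ gives the partial-fraction identity
$$\frac{p(G,x)}{(x+1)\prod_{i=1}^s(x+4-3n_i)}=1-\frac{h}{x+1}-\sum_{i=1}^s\frac{n_i}{x+4-3n_i}.$$
Setting $x=0$ yields $p(G,0)=\bigl(\prod_{i=1}^s(4-3n_i)\bigr)\bigl(1-h+\sum_{i=1}^s\tfrac{n_i}{3n_i-4}\bigr)$. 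Since each $4-3n_i<0$, the leading product has sign $(-1)^s$. Comparing with $p(G,0)=(-1)^{s+1}\prod_{i=1}^{s+1}\lambda_i(G)$, in which the first $s$ factors are positive, the two parity contributions cancel and the sign of $\lambda_{s+1}(G)$ is governed entirely by the sign of $\,h-1-\sum_{i=1}^s\tfrac{n_i}{3n_i-4}$, producing the three equivalences in~$(ii)$. The lower bound $\lambda_{s+1}(G)>-1$ in the negative case is already supplied by Lemma~\ref{lem:f-neq-0}.

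The main delicate point is the sign bookkeeping in the final step: the parity of $s$ flips the sign of $\prod_{i=1}^s(4-3n_i)$ and of $(-1)^{s+1}\prod_i \lambda_i(G)$ in parallel. The key observation is that these two flips cancel, so no parity-based case analysis is required and a single uniform characterization of the sign of $\lambda_{s+1}(G)$ emerges.
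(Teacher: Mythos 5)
Your proof is correct and follows essentially the same route as the paper: parts $(i)$ and $(iii)$ are argued identically (interlacing against the principal submatrix $\Delta(K_{n_1,\ldots,n_s})$ via Theorems~\ref{thm:ni-geq-2} and~\ref{thm:inter}, and the factorization of Corollary~\ref{cor:char-reduce} combined with Lemma~\ref{lem:f-neq-0}). For part $(ii)$ your evaluation of $p(G,0)$ yields exactly the same identity $\prod_{i=1}^{s+1}\lambda_i(G)=\bigl[(h-1)-\sum_{i=1}^{s}\tfrac{n_i}{3n_i-4}\bigr]\prod_{i=1}^{s}(3n_i-4)$ that the paper derives by specializing the determinant formula of Corollary~\ref{cor:detD_single}, so the two arguments differ only cosmetically.
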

\begin{proof}
Let $H=K_{n_1,n_2,\cdots,n_s}$ be the complete $s$-partite graph on $n-h=\sum_{i=1}^s n_i$ vertices and  $\Delta(H)$ be the squared distance matrix of $H$. Then,  $\Delta(H)$ is a principal submatrix of $\Delta(G)$. If $\lambda_1(H)\geq \lambda_2(H)\geq \cdots \geq \lambda_{n-h}(H) $ denote the eigenvalues of    $\Delta(H)$, then  Theorem~\ref{thm:ni-geq-2} yields that $\lambda_s(H)>0$ and hence using Theorem~\ref{thm:inter}, we have  $\lambda_s(G)\geq \lambda_s(H)>0$. This proves part $(i)$.

Let $B_s$ be the matrix defined in Eqn~(\ref{eqn:M_2}) with parameters $n_1\geq n_2\geq \cdots \geq  n_s \geq 2$.  By Corollary~\ref{cor:char-reduce}, the characteristic polynomial of $\Delta(G)$ is given by  $$P_{\Delta}(G,x)=(x+4)^{n-t}(x+1)^{h-1} p(G,x),$$ where
$ p(G,x)= (x+1) \det(xI-B_s)- h  \prod_{i=1}^s (x+4-3n_i).$  By Lemma~\ref{lem:f-neq-0}, we have  $p(G,x)\neq 0$ for  $x\leq -1$, and hence  the roots of the polynomial $p(G,x)$ are $\lambda_i(G)$ for $1\leq i\leq s+1$.
Therefore,  $\lambda_i(G)=-1$ for $s+2\leq i\leq s+h$ and $\lambda_i(G)=-4$ for $s+h+1\leq i\leq n.$ Since  by part $(i)$, we know that $\lambda_i(G)>0$ for $1\leq i\leq s$, so  if $\lambda_{s+1}(G) <0,$ then $ -1<\lambda_{s+1}(G) <0.$ 

Now substituting $n_i=1$ for $s+1 \leq i\leq t=s+h$ in  Corollary~\ref{cor:detD_single}, the determinant of the squared distance matrix $\Delta(G)$ is given by
\begin{align}\label{eqn:det-h-1}
\det \Delta(G) &= (-4)^{n-t}\left[ \sum_{i=1}^t \Bigg( n_i\prod_{j \neq i}(3n_j - 4) \Bigg) +  \prod_{i=1}^t (3n_i - 4)\right]\nonumber\\
               &= (-4)^{n-t}\left[ (-1)^h\sum_{i=1}^s \Bigg( n_i\prod_{j \neq i}(3n_j - 4) \Bigg) + (-1)^{h-1} h \prod_{i=1}^s (3n_i - 4)+ (-1)^h \prod_{i=1}^s (3n_i - 4)\right]\nonumber\\
               &=(-4)^{n-t} (-1)^{h-1} \left[ (h-1)\prod_{i=1}^s (3n_i - 4) - \sum_{i=1}^s \Bigg( n_i\prod_{j \neq i}(3n_j - 4) \Bigg)\right]\nonumber\\
               &=(-4)^{n-t} (-1)^{h-1} \left[ (h-1)- \sum_{i=1}^s \frac{n_i}{3n_i - 4}\right] \prod_{i=1}^s (3n_i - 4).
\end{align}

Further, we also know that $\lambda_i(G)=-1$ for $s+2\leq i\leq s+h$, {\it i.e.,} $-1$ is an eigenvalue  with multiplicity $h-1$ and $\lambda_i(G)=-4$ for $s+h+1\leq i\leq n$, {\it i.e.,}  $-4$ is an eigenvalue  with multiplicity $n-t$. Thus, using $ \det \Delta(G) = \prod_{i=1}^n \lambda_i(G)$, the Eqn.~\eqref{eqn:det-h-1} yields that
$$\prod_{i=1}^{s+1} \lambda_i(G) =\left[ (h-1)- \sum_{i=1}^s \frac{n_i}{3n_i - 4}\right] \prod_{i=1}^s (3n_i - 4).$$
Since $\prod_{i=1}^s (3n_i - 4) >0$ as $n_i\geq 2$ for  $1\leq i\leq s$ and by part $(i)$, $\lambda_i(G)>0$ for $1\leq i\leq s$, so  $\lambda_{s+1}(G)>0  \textup{ if and only if  } h-1 > \ds\sum_{i=1}^s \dfrac{n_i}{3n_i-4},$ $\lambda_{s+1}(G)=0  \textup{ if and only if  } h-1 = \ds\sum_{i=1}^s \dfrac{n_i}{3n_i-4}$  and  $\lambda_{s+1}(G)<0  \textup{ if and only if  } h-1 < \ds\sum_{i=1}^s \dfrac{n_i}{3n_i-4}.$ This completes the proof.
\end{proof}

The following result determines the inertia and the squared distance energy whenever complete $t$-partite graphs are in   $\mathcal{M}(n,t,h)$. This result is an immediate consequence of Lemma~\ref{lem:eigen-sq-dist}, and hence we state the result without proof. 

\begin{theorem}\label{thm:inertia-energy}
Let $t=s+h\geq 2$ and   $n_1,n_2,\cdots,n_t$ be positive integers  such that $n_i\geq 2$ for $1\leq i\leq s$  and $n_i=1$ for $s+1 \leq i\leq t=s+h$. 
Let $G$ be the complete $t$-partite graph $K_{n_1,n_2,\cdots,n_t}$ on $n=\sum_{i=1}^t n_i$ vertices. Let $\Delta(G)$ be the squared distance matrix of $G$ and $\lambda_1(G)\geq \lambda_2(G)\geq \cdots \geq \lambda_n(G) $ are the eigenvalues of $\Delta(G)$.  Then, the inertia of $\Delta(G)$ is given by
$$\textup{In}(\Delta(G))= 
\begin{cases}
(s+1, 0, n-s-1) & \textup{ if } h-1 > \ds\sum_{i=1}^s \dfrac{n_i}{3n_i-4},\\
(s, 1, n-s-1) & \textup{ if } h-1 = \ds \sum_{i=1}^s \dfrac{n_i}{3n_i-4},\\
(s, 0, n-s) & \textup{ if } h-1 < \ds \sum_{i=1}^s \dfrac{n_i}{3n_i-4},\\
\end{cases}$$ 
and  the squared distance energy  of $G$ is given by $$E_{\Delta}(G)= \begin{cases}
8(n-t)+2(h-1) & \textup{ if } h-1 \geq  \ds\sum_{i=1}^s \dfrac{n_i}{3n_i-4},\\
8(n-t)+2[(h-1)+ \theta] & \textup{ if } h-1 < \ds \sum_{i=1}^s \dfrac{n_i}{3n_i-4},\\
\end{cases}$$
where  $0< \theta < 1$ and  $\theta= -\lambda_{s+1}(G)$. Equivalently, the squared distance energy  of $G$ is given by
$$E_{\Delta}(G)=\begin{cases}
8(n-t)+2(h-1) & \textup{ if }\lambda_{s+1}(G) \geq 0,\\
8(n-t)+2[(h-1)-\lambda_{s+1}(G) ] & \textup{ if } \lambda_{s+1}(G)<0.\\
\end{cases}$$ 
\end{theorem}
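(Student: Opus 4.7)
The plan is to derive this statement as a direct bookkeeping exercise from Lemma~\ref{lem:eigen-sq-dist}, which already pins down every eigenvalue of $\Delta(G)$ except the sign of $\lambda_{s+1}(G)$. Since the result is claimed to be an immediate consequence of that lemma, the only real work is to translate the three trichotomy cases of Lemma~\ref{lem:eigen-sq-dist}$(ii)$ into (a) a count of positive/zero/negative eigenvalues, and (b) a sum of absolute values of negative eigenvalues.

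For the inertia, I would first use Lemma~\ref{lem:eigen-sq-dist}$(i)$ to record that $\lambda_1(G),\dots,\lambda_s(G)>0$, and Lemma~\ref{lem:eigen-sq-dist}$(iii)$ to record that the tail eigenvalues consist of $-1$ with multiplicity $h-1$ and $-4$ with multiplicity $n-t$ (both strictly negative). That accounts for $s+(h-1)+(n-t)=n-1$ eigenvalues, so the only ambiguity is the sign of the remaining eigenvalue $\lambda_{s+1}(G)$. Feeding the three cases of Lemma~\ref{lem:eigen-sq-dist}$(ii)$ into this count yields, respectively, $\mathbf{n}_+ = s+1,\,s,\,s$ and $\mathbf{n}_0 = 0,\,1,\,0$ and $\mathbf{n}_- = n-s-1,\,n-s-1,\,n-s$, which matches the claimed inertia.

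For the energy, the identity $E_\Delta(G)=2\sum_{\lambda_i(G)<0}|\lambda_i(G)|$ recalled in the introduction reduces the problem to summing the absolute values of the negative eigenvalues listed above. The guaranteed negative contributions are $4(n-t)$ (from the eigenvalue $-4$ of multiplicity $n-t$) and $(h-1)$ (from the eigenvalue $-1$ of multiplicity $h-1$); doubling gives the base term $8(n-t)+2(h-1)$. When $\lambda_{s+1}(G)\geq 0$ this is the whole answer. When $\lambda_{s+1}(G)<0$, Lemma~\ref{lem:eigen-sq-dist}$(ii)$ also provides the sharp bound $-1<\lambda_{s+1}(G)<0$, so we add $2|\lambda_{s+1}(G)|=2\theta$ with $\theta=-\lambda_{s+1}(G)\in(0,1)$, producing $8(n-t)+2[(h-1)+\theta]$. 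The two equivalent formulations of $E_\Delta(G)$ in the theorem statement are just a rewriting of this in terms of $\lambda_{s+1}(G)$ versus in terms of $h-1$ and the comparison threshold $\sum_{i=1}^s n_i/(3n_i-4)$.

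There is essentially no obstacle here, since all of the hard analytic work, namely localizing $\lambda_{s+1}(G)$ in the interval $(-1,0)$ and identifying the remaining eigenvalues as $-1$ and $-4$ of the stated multiplicities, has already been carried out in Corollary~\ref{cor:char-reduce}, Lemma~\ref{lem:f-neq-0}, and Lemma~\ref{lem:eigen-sq-dist}. The only potential subtlety is to remember that in the zero-threshold case $h-1=\sum_{i=1}^s n_i/(3n_i-4)$ the single eigenvalue $\lambda_{s+1}(G)=0$ contributes nothing to the energy, which explains why the two displayed formulas can be consolidated under the single inequality $h-1\geq \sum_{i=1}^s n_i/(3n_i-4)$.
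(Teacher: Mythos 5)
Your proposal is correct and follows exactly the route the paper intends: the authors state Theorem~\ref{thm:inertia-energy} without proof precisely because it is the bookkeeping consequence of Lemma~\ref{lem:eigen-sq-dist} that you carry out, counting the $s$ positive eigenvalues, the eigenvalue $-1$ with multiplicity $h-1$, the eigenvalue $-4$ with multiplicity $n-t$, and resolving the sign of $\lambda_{s+1}(G)$ via the trichotomy in part $(ii)$. Your eigenvalue counts and the energy computation $E_{\Delta}(G)=2\sum_{\lambda_i(G)<0}|\lambda_i(G)|$ are both accurate, including the handling of the boundary case $\lambda_{s+1}(G)=0$.
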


\begin{cor}\label{cor:energy-bound}
Let $K_{n_1,n_2,\cdots,n_t}$  be a complete $t$-partite graphs on $n=\sum_{i=1}^t n_i$ vertices. If $h= |\{i : n_i=1\}|$ and $h\geq 1$, then 
$$ 8(n-t)+2(h-1) \leq E_{\Delta}(K_{n_1,n_2,\cdots,n_t}) < 8(n-t)+2h.$$
\end{cor}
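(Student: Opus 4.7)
The plan is to derive both inequalities as an immediate consequence of Theorem~\ref{thm:inertia-energy}. First I would relabel the parts so that $n_1\geq n_2\geq \cdots\geq n_s\geq 2$ and $n_{s+1}=\cdots=n_t=1$ with $s=t-h$, matching exactly the hypotheses of Theorem~\ref{thm:inertia-energy}. Since $h\geq 1$ by assumption, this is consistent; in the edge case $s=0$ (i.e.\ $G=K_t$) the product and sum indexed over $1\leq i\leq s$ are empty, and the theorem still applies with $\sum_{i=1}^s \tfrac{n_i}{3n_i-4}=0$.

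Next I would split into the two cases dictated by Theorem~\ref{thm:inertia-energy}. In the case $h-1\geq \sum_{i=1}^s\tfrac{n_i}{3n_i-4}$ the theorem directly gives
\[
E_{\Delta}(G)=8(n-t)+2(h-1),
\]
which attains the lower bound, and the strict upper bound $8(n-t)+2(h-1)<8(n-t)+2h$ is trivial. In the complementary case $h-1<\sum_{i=1}^s\tfrac{n_i}{3n_i-4}$ the theorem gives
\[
E_{\Delta}(G)=8(n-t)+2(h-1)+2\theta,\qquad 0<\theta<1,
\]
where $\theta=-\lambda_{s+1}(G)$. Here $2\theta>0$ yields (strictly) the lower bound, and $2\theta<2$ yields the strict upper bound $E_{\Delta}(G)<8(n-t)+2h$.

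Combining the two cases produces the desired chain $8(n-t)+2(h-1)\leq E_{\Delta}(G)<8(n-t)+2h$. There is no genuine obstacle: the corollary is a purely bookkeeping consequence of Theorem~\ref{thm:inertia-energy}, whose proof (via Lemma~\ref{lem:eigen-sq-dist}) already contains all the spectral information, in particular the sharp bounds $-1<\lambda_{s+1}(G)<0$ that force $0<\theta<1$ and thereby make the upper bound strict while keeping the lower bound tight.
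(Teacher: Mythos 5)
Your proposal is correct and follows exactly the route the paper intends: the corollary is read off from the two cases of Theorem~\ref{thm:inertia-energy}, with the lower bound attained when $h-1\geq\sum_{i=1}^{s}\frac{n_i}{3n_i-4}$ and the strict upper bound coming from $0<\theta<1$ (i.e.\ $-1<\lambda_{s+1}(G)<0$) otherwise. Your remark on the degenerate case $s=0$ (the complete graph $K_t$) is a sensible extra check that the paper leaves implicit.
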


\begin{cor}\label{cor:big-h-energy}
Let  $n=\sum_{i=1}^t n_i =\sum_{i=1}^t m_i$. Let $G=K_{n_1,n_2,\cdots,n_t}$ and $H=K_{m_1,m_2,\cdots,m_t}$ be complete $t$-partite graphs on $n$ vertices. Let $h_G= |\{i : n_i=1\}|$ and $h_H= |\{i : m_i=1\}|$. If $ h_H < h_G$, then $   E_{\Delta}(H)< E_{\Delta}(G)$.
\end{cor}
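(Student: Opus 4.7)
The plan is to split on whether $H$ has any singleton part and chain together the sandwich bounds from Corollary~\ref{cor:energy-bound} (together with Theorem~\ref{thm:ni-geq-2} for the exact value in the no-singleton case). The hypothesis $h_H < h_G$ provides exactly the integer gap $h_H \le h_G - 1$ needed to separate the two intervals.

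\textbf{Case 1:} $h_H \ge 1$. I would apply Corollary~\ref{cor:energy-bound} to both graphs and use $h_H \le h_G - 1$ to stack the inequalities:
\[
E_\Delta(H) \;<\; 8(n-t) + 2h_H \;\le\; 8(n-t) + 2(h_G-1) \;\le\; E_\Delta(G).
\]
This settles the case immediately, since the strict inequality comes for free from the upper bound on $E_\Delta(H)$.

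\textbf{Case 2:} $h_H = 0$. Here Theorem~\ref{thm:ni-geq-2} pins down $E_\Delta(H) = 8(n-t)$ exactly. If $h_G \ge 2$, Corollary~\ref{cor:energy-bound} gives $E_\Delta(G) \ge 8(n-t) + 2 > E_\Delta(H)$ and I am done. The delicate sub-case is $h_G = 1$, where the bound from the corollary degenerates to $E_\Delta(G) \ge 8(n-t)$ and does not by itself deliver strict inequality. To recover it, I would unfold Theorem~\ref{thm:inertia-energy} with $s = t-1 \ge 1$: each of the $s$ non-singleton parts has $n_i \ge 2$, so $\sum_{i=1}^s \tfrac{n_i}{3n_i - 4} > 0$, while $h_G - 1 = 0$. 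Thus the third branch of the trichotomy is forced, giving $\lambda_{s+1}(G) < 0$ and consequently
\[
E_\Delta(G) \;=\; 8(n-t) + 2\bigl[(h_G-1) - \lambda_{s+1}(G)\bigr] \;=\; 8(n-t) - 2\lambda_{s+1}(G) \;>\; 8(n-t) \;=\; E_\Delta(H).
\]

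The main obstacle is exactly this boundary configuration $h_H = 0, h_G = 1$, where the coarse corollary bounds collapse on the same value $8(n-t)$. Everything else reduces to adding integers; the substance lies in observing that whenever $G$ has even a single non-singleton part, the condition triggering the third (strictly negative eigenvalue) branch of Theorem~\ref{thm:inertia-energy} must hold at $h = 1$, because the positive sum $\sum_{i=1}^{s} \tfrac{n_i}{3n_i-4}$ cannot be dominated by $0$. Once that observation is in place, the proof is a two-line case split.
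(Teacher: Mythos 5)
Your proposal is correct and follows essentially the same route as the paper: the easy cases are dispatched by stacking the bounds $E_\Delta(H) < 8(n-t)+2h_H \le 8(n-t)+2(h_G-1) \le E_\Delta(G)$ from Corollary~\ref{cor:energy-bound} and Theorem~\ref{thm:ni-geq-2}, and the delicate boundary case $h_H=0$, $h_G=1$ is resolved exactly as the paper does, by observing that $0 = h_G-1 < \sum_{i} n_i/(3n_i-4)$ forces the $\lambda_{s+1}(G)<0$ branch of Theorem~\ref{thm:inertia-energy}, giving $E_\Delta(G) = 8(n-t) - 2\lambda_{s+1}(G) > 8(n-t) = E_\Delta(H)$. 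The only difference is cosmetic: you split on $h_H$ where the paper splits on $h_G$.
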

\begin{proof}
For $h_G\geq 2$, using Theorem~\ref{thm:inertia-energy}, it is easy to see that the result hold true. Next, let $h_G=1$. Then $h_H=0$ and hence $m_i\geq 2$ for $1\leq i \leq t$. By Theorem~\ref{thm:ni-geq-2}, we have $E_{\Delta}(H)=8(n-t)$.

Without loss of generality, let us assume $n_1\geq n_2\geq \cdots \geq n_t$. Thus, using $h_G=1$, we get $n_i\geq 2$ for $1\leq i \leq t-1$ and therefore, $0=h_G-1<\sum_{i=1}^{t-1} \dfrac{n_i}{3n_i-4} $. By Theorem~\ref{thm:inertia-energy}, we have $E_{\Delta}(G)=8(n-t)+ \theta$, where $0<\theta<1$. Hence $  E_{\Delta}(H)< E_{\Delta}(G)$.
\end{proof}

\section{Extremal Graphs for Maxima and Minima of $E_{\Delta}(K_{n_1,n_2,\cdots,n_t})$ }\label{sec:max-min-energy}

In this section, we prove that if $ (m_1,m_2,\cdots,m_t)\succ (n_1,n_2,\cdots,n_t)$, then $E_{\Delta}(K_{m_1,m_2,\cdots,m_t}) \geq E_{\Delta}(K_{n_1,n_2,\cdots,n_t})$. Furthermore,   for fixed value of $n$ and $t$, we find  complete $t$-partite graphs for which the squared distance energy $E_{\Delta}(K_{n_1,n_2,\cdots,n_t})$ attain its maxima and minima, and discuss the uniqueness of these extremal graphs. We first prove a few  lemmas  useful to achieve the above goals.

\begin{lem}\label{lem:ineq-en}
Let $n_p , n_q \geq 2$  and  $n_p \geq n_q+2$.  If $0\leq x<1$, then
\begin{equation}\label{eqn_energy_ineq2}
\frac{n_p}{x-4+3n_p} + \frac{n_q}{x-4+3n_q} > \frac{n_p-1}{x-4+3(n_p-1)} + \frac{n_q+1}{x-4+3(n_q+1)}.
\end{equation}
\end{lem}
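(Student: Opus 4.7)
The plan is to rewrite the inequality as a comparison of forward differences of the auxiliary function $f(k) := \frac{k}{x-4+3k}$, namely
\[
f(n_p) - f(n_p-1) \;>\; f(n_q+1) - f(n_q),
\]
and then exploit a telescoping identity to reduce the problem to a polynomial inequality in $x$, $n_q$, and $d := n_p - n_q$.

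First I would verify by direct computation the key identity
\[
f(k) - f(k-1) = \frac{k(x-7+3k) - (k-1)(x-4+3k)}{(x-4+3k)(x-7+3k)} = \frac{x-4}{(x-4+3k)(x-7+3k)},
\]
the numerator collapsing to $x-4$ after cancellation. Applying this with $k = n_p$ on the left and $k = n_q+1$ on the right recasts the target inequality as
\[
\frac{x-4}{(x-4+3n_p)(x-7+3n_p)} \;>\; \frac{x-4}{(x-1+3n_q)(x-4+3n_q)}.
\]

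Next I would note that $x-4 < 0$ (since $x < 1$) while all four linear factors are strictly positive: $n_p \geq n_q + 2 \geq 4$ gives $x-7+3n_p \geq x+5 > 0$, and $n_q \geq 2$ gives $x-4+3n_q \geq x+2 > 0$. Dividing by the negative quantity $x-4$ reverses the inequality, so the claim reduces to showing
\[
(x-4+3n_p)(x-7+3n_p) \;>\; (x-1+3n_q)(x-4+3n_q).
\]
Setting $d := n_p - n_q \geq 2$ and expanding both sides as polynomials in $x+3n_p$ and $x+3n_q$ respectively, I expect the difference to factor as $3(d-1)\bigl[2x + 6n_q + 3d - 8\bigr]$, which is strictly positive since $d-1 \geq 1$ and $2x + 6n_q + 3d - 8 \geq 0 + 12 + 6 - 8 = 10$.

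The main obstacle is the sign bookkeeping when dividing by the negative quantity $x-4$ and the verification that the resulting polynomial difference factors cleanly into two manifestly positive pieces. As an alternative one could apply Lemma~\ref{lem:ab-ineq}(ii) directly with $a = x-4+3n_p$ and $b = x-4+3n_q$ (the hypotheses $a > 3$ and $b > 0$ being immediate from $n_p \geq 4$ and $n_q \geq 2$), reducing the claim to the algebraic condition $3[n_q a(a-3) - n_p b(b+3)] > ab(a-b-6)$; this route amounts to essentially the same expansion.
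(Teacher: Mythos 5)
Your proof is correct, and it is organized differently from the paper's. The key identity
\[
f(k)-f(k-1)=\frac{k}{x-4+3k}-\frac{k-1}{x-7+3k}=\frac{x-4}{(x-4+3k)(x-7+3k)}
\]
does hold (the numerator collapses to $x-4$ exactly as you claim), and your final factorization is also right: writing $d=n_p-n_q$ and $v=x+3n_q$, one gets
\[
(x-4+3n_p)(x-7+3n_p)-(x-1+3n_q)(x-4+3n_q)=3(d-1)\bigl[2v+3d-8\bigr]=3(d-1)\bigl[2x+6n_q+3d-8\bigr]\geq 30>0,
\]
and the sign analysis of the four linear factors is as you state. (Your phrase ``dividing by $x-4$ reverses the inequality, so the claim reduces to $A>B$'' compresses two reversals — dividing by the negative $x-4$ gives $1/A<1/B$, and then cross-multiplying by the positive $AB$ gives $A>B$ — but the endpoint is correct.) The paper instead invokes its Lemma~\ref{lem:ab-ineq}(ii) with $a=x-4+3n_p$, $b=x-4+3n_q$, reducing the claim to $3[n_qa(a-3)-n_pb(b+3)]-ab(a-b-6)>0$, and then shows by direct expansion that this quantity equals $(x-4)(n_q-n_p+1)[2(x-4)+3(n_p+n_q)]$, a product of two negative factors and one positive factor. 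Note that $2(x-4)+3(n_p+n_q)=2x+6n_q+3d-8$ and $n_q-n_p+1=-(d-1)$, so the two computations ultimately isolate the same factored expression; what your telescoping reformulation buys is that each side of the inequality collapses to a single fraction with common numerator $x-4$, so the whole problem becomes the monotonicity of $k\mapsto(x-4+3k)(x-7+3k)$ on $k\geq n_q+1$, avoiding both the four-term cross-multiplication and the auxiliary lemma. Either route is a complete proof.
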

\begin{proof}
Observe that, if $n_p , n_q \geq 2$  and  $n_p \geq n_q+2$, then  $(x-4+3n_p), (x-4+3n_q),  (x-4+3(n_p-1)), (x-4+3(n_q+1)) > 0$ for $0\leq x<1$. For  a fix $x \in [0,1)$,  let  $a= x-4+3n_p$ and $ b=x-4+3n_q$. Then, in view of part  $(ii)$ of Lemma~\ref{lem:ab-ineq}, to prove inequality  Eqn.~\eqref{eqn_energy_ineq2} it is enough to show that  
$$3[n_q a(a-3)-n_p b(b+3)] - ab(a-b-6)>0.$$
Note that,

 \vspace*{-0.25cm}
\begin{align*}
 & \quad \ 3[n_q a(a-3)-n_p b(b+3)]\\
&=3\left[n_q(x-4+3n_p)(x-4+3(n_p-1))- n_p(x-4+3n_q)(x-4+3(n_q+1)) \right]\\
&= 3\left[ n_q (x-4)^2 +3(x-4)(2n_pn_q-n_q)+9 n_p(n_p-1) \right]\\
&\hspace*{3cm}-3\left[n_p(x-4)^2 +3(x-4)(2n_pn_q + n_p)+9 n_pn_q(n_q+1) \right]\\
&=3\left[ (n_q-n_p)(x-4)^2-3(x-4)(n_p+n_q) + 9n_pn_q(n_p-n_q-2)\right],
\end{align*}
and
\begin{align*}
ab(a-b-6)=& 3(x-4+3n_p) (x-4+3n_q)(n_p-n_q-2)\\
=& 3 \left[(n_p-n_q-2) (x-4)^2  + 3(x-4)(n_p+n_q)(n_p-n_q-2) + 9n_pn_q(n_p-n_q-2) \right].
\end{align*}
Thus,
\begin{align}\label{eqn:ab-1}
 & \quad \ 3[n_q a(a-3)-n_p b(b+3)] - ab(a-b-6)\nonumber\\
 &= 3\left[ (n_q-n_p)(x-4)^2-3(x-4)(n_p+n_q)\right]\nonumber\\
 & \hspace*{3cm} -  3 \left[(n_p-n_q-2) (x-4)^2  + 3(x-4)(n_p+n_q)(n_p-n_q-2) \right] \nonumber\\
 &= 3\left[  2 (x-4)^2(n_q-n_p+1) + 3(x-4)(n_p+n_q)(n_q-n_p+1) \right]\nonumber\\
 &= (x-4)(n_q-n_p+1) [2(x-4)+3(n_p+n_q)].
\end{align}
Further, for $0\leq x <1$, we have $ -4 \leq x-4 <-3$, and using $n_p\geq n_q +2$, we have $n_q-n_p+1 \leq -1$, which implies that 
\begin{equation}\label{eqn:ab-2}
(x-4)(n_q-n_p+1) >0.
\end{equation}
Next, for $0\leq x <1$, we have $ -8 \leq 2(x-4) <-6$ and 
 for $n_p,n_q \geq 2$, we have $3(n_p+n_q) \geq 12$, which implies that 
 \begin{equation}\label{eqn:ab-3}
 [2(x-4)+3(n_p+n_q)]>0.
 \end{equation}
Using Eqns.~\eqref{eqn:ab-1} -~\eqref{eqn:ab-3}, we have $3[n_q a(a-3)-n_p b(b+3)] - ab(a-b-6)>0.$ This completes the proof. 
\end{proof}

\begin{assume}\label{asmp}
Let $s\geq 2$ and $h\geq 1$. Let $t=s+h\geq 3$ and   $n_1,n_2,\cdots,n_t$ be positive integers  such that  $n_i  \geq 2$ for $1 \leq i\leq s$ and $n_i=1$ for $s+1 \leq i\leq t=s+h$.  Let  $n=\sum_{i=1}^t n_i$ and  $n_p \ge n_q+2$ for $1\leq p ,q\leq s$. Let $G=K_{n_1,\cdots,n_p,\cdots,n_q,\cdots,n_s,n_{s+1},\cdots, n_t}$ and $H=K_{n_1,\cdots,n_p-1,\cdots,n_q+1,\cdots,n_s,n_{s+1},\cdots, n_t}$  be complete $t$-partite graphs on $n$ vertices. Let $\lambda_1(G)\geq \lambda_2(G)\geq \cdots \geq \lambda_n(G) $  and $\lambda_1(H)\geq \lambda_2(H)\geq \cdots \geq \lambda_n(H) $ be the eigenvalues of the squared distance matrix  $ \Delta(G)$ and $\Delta(H)$, respectively.
\end{assume}

\begin{lem}\label{lem:pert-non-negative}
Under the Assumption~\ref{asmp}, if $ \lambda_{s+1}(G)\geq 0$, then $ \lambda_{s+1}(H)>0$ and 
$$E_{\Delta}(G)= E_{\Delta}(H)= 8(n-t)+2(h-1).$$
\end{lem}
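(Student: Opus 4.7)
The plan is to translate the sign of $\lambda_{s+1}$ into the arithmetic inequality provided by Lemma~\ref{lem:eigen-sq-dist}, apply Lemma~\ref{lem:ineq-en} at $x=0$ to compare the resulting sums for $G$ and $H$, and then read off the energies from Theorem~\ref{thm:inertia-energy}. Let $m_1,\ldots,m_s$ denote the partition sizes of $H$ that exceed $1$, namely $m_p = n_p - 1$, $m_q = n_q + 1$, and $m_i = n_i$ for $i \notin \{p,q\}$. Since $n_q \geq 2$ and $n_p \geq n_q + 2 \geq 4$, one has $m_p \geq 2$ and $m_q \geq 3$, so $H$ still has exactly $s$ parts of size at least $2$ and $h$ parts of size $1$; in particular Lemma~\ref{lem:eigen-sq-dist} and Theorem~\ref{thm:inertia-energy} apply to $H$ with the same parameters $s$ and $h$ as for $G$.

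By Lemma~\ref{lem:eigen-sq-dist}(ii), the hypothesis $\lambda_{s+1}(G)\geq 0$ is equivalent to
$$ h - 1 \;\geq\; \sum_{i=1}^{s}\frac{n_i}{3n_i - 4}. $$
Specializing Lemma~\ref{lem:ineq-en} to $x = 0$ (the hypotheses $n_p,n_q\geq 2$ and $n_p \geq n_q + 2$ are exactly those of that lemma) yields
$$ \frac{n_p}{3n_p - 4} + \frac{n_q}{3n_q - 4} \;>\; \frac{n_p - 1}{3(n_p - 1) - 4} + \frac{n_q + 1}{3(n_q + 1) - 4}, $$
and all other terms in the sums over $i$ are unchanged when passing from $(n_i)$ to $(m_i)$. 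Consequently
$$ \sum_{i=1}^{s}\frac{n_i}{3n_i - 4} \;>\; \sum_{i=1}^{s}\frac{m_i}{3m_i - 4}, $$
and chaining with the displayed inequality above gives $h - 1 > \sum_{i=1}^{s}\frac{m_i}{3m_i - 4}$. Applying Lemma~\ref{lem:eigen-sq-dist}(ii) to $H$ therefore yields $\lambda_{s+1}(H) > 0$.

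Finally, since $\lambda_{s+1}(G)\geq 0$ and $\lambda_{s+1}(H)>0$, the first case of Theorem~\ref{thm:inertia-energy} applied to both graphs gives
$$ E_{\Delta}(G) \;=\; E_{\Delta}(H) \;=\; 8(n - t) + 2(h - 1), $$
as required. There is no real obstacle: the argument is a direct chain of substitutions, and the only subtlety worth checking carefully is the bookkeeping that the shift $(n_p,n_q)\mapsto(n_p-1,n_q+1)$ preserves the class $\mathcal{M}(n,t,h)$ and hence the applicability of Lemma~\ref{lem:eigen-sq-dist} to $H$. The crucial analytical input, Lemma~\ref{lem:ineq-en} evaluated at $x=0$, is sharp enough to turn the non-strict hypothesis for $G$ into the strict conclusion for $H$, which is what makes the ``$=$'' in the energy equality survive.
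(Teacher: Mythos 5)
Your proof is correct and follows essentially the same route as the paper's: translate $\lambda_{s+1}(G)\geq 0$ into $h-1\geq\sum_{i=1}^s n_i/(3n_i-4)$ via Lemma~\ref{lem:eigen-sq-dist}(ii), apply Lemma~\ref{lem:ineq-en} at $x=0$ to get the strict inequality for the shifted parts, and conclude with Theorem~\ref{thm:inertia-energy}. The extra bookkeeping you include (that $m_p=n_p-1\geq 2$ and $m_q=n_q+1\geq 3$, so $H$ stays in $\mathcal{M}(n,t,h)$) is a small but welcome addition the paper leaves implicit.
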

\begin{proof}
By part ($ii$) of Lemma~\ref{lem:eigen-sq-dist} we know that  $\lambda_{s+1}(G)\geq 0$ only if  $h-1 \geq  \ds\sum_{i=1}^s \dfrac{n_i}{3n_i-4}.$ For $x=0$,  Lemma~\ref{lem:ineq-en} gives us
$$\frac{n_p}{3n_p-4} + \frac{n_q}{3n_q-4} > \frac{n_p-1}{3(n_p-1)-4} + \frac{n_q+1}{3(n_q+1)-4},$$
and hence 
$$ h-1 \geq  \sum_{i=1}^s \dfrac{n_i}{3n_i-4} > 
\frac{n_p-1}{3(n_p-1)-4} + \frac{n_q+1}{3(n_q+1)-4}+ \sum_{i=1 \atop i\neq p,q}^s \dfrac{n_i}{3n_i-4}.
$$
Therefore, using part ($ii$) of Lemma~\ref{lem:eigen-sq-dist}, we get $ \lambda_{s+1}(H)>0$. Furthermore, by Theorem~\ref{thm:inertia-energy}, we have $E_{\Delta}(G)= E_{\Delta}(H)= 8(n-t)+2(h-1).$ 
\end{proof}

\begin{lem}\label{lem:pert-negative}
Under the Assumption~\ref{asmp}, if $ \lambda_{s+1}(G)<0$, then $\lambda_{s+1}(G) < \lambda_{s+1}(H)$ and   $$E_{\Delta}(G)> E_{\Delta}(H).$$
\end{lem}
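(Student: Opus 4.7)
The plan is to encode the non-trivial eigenvalues of $\Delta(G)$ and $\Delta(H)$ through a strictly increasing rational function, reduce the desired inequality $\lambda_{s+1}(G)<\lambda_{s+1}(H)$ to a pointwise comparison, and then read off the energy inequality from Theorem~\ref{thm:inertia-energy}. Starting from Proposition~\ref{prop:char_eqn} and the block factoring used in Corollary~\ref{cor:char-reduce}, I would write
\[
P_{\Delta}(G,x)=(x+4)^{n-t}(x+1)^{h}\prod_{i=1}^{s}(x+4-3n_i)\cdot q_G(x),\qquad q_G(x):=1-\frac{h}{x+1}-\sum_{i=1}^{s}\frac{n_i}{x+4-3n_i},
\]
and define $q_H$ by the same formula with $n_p,n_q$ replaced by $n_p-1,n_q+1$. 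By Lemma~\ref{lem:eigen-sq-dist} the $s+1$ eigenvalues $\lambda_1(G)\ge\cdots\ge\lambda_{s+1}(G)$ are precisely the zeros of $q_G$. A direct derivative computation gives $q_G'(x)=\tfrac{h}{(x+1)^2}+\sum_{i=1}^{s}\tfrac{n_i}{(x+4-3n_i)^2}>0$, and since $3n_i-4\ge 2$ for every $i$, both $q_G$ and $q_H$ are pole-free and strictly increasing throughout the interval $(-1,2)$.

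The crux is the pointwise comparison $q_G(y)>q_H(y)$ for every $y\in(-1,0]$. All but two summands cancel in $q_G-q_H$, leaving
\[
q_G(y)-q_H(y)=\Big(\tfrac{n_p}{3n_p-4-y}+\tfrac{n_q}{3n_q-4-y}\Big)-\Big(\tfrac{n_p-1}{3(n_p-1)-4-y}+\tfrac{n_q+1}{3(n_q+1)-4-y}\Big),
\]
and after the substitution $x=-y\in[0,1)$ this is exactly the inequality supplied by Lemma~\ref{lem:ineq-en}. Under the hypothesis $\lambda_{s+1}(G)<0$, Lemma~\ref{lem:eigen-sq-dist}(ii) places $\lambda_{s+1}(G)$ in $(-1,0)$, so evaluating the comparison at this point yields $q_H(\lambda_{s+1}(G))<q_G(\lambda_{s+1}(G))=0$. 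If $\lambda_{s+1}(H)\ge 0$, then $\lambda_{s+1}(G)<0\le\lambda_{s+1}(H)$ is immediate; otherwise $\lambda_{s+1}(H)\in(-1,0)$ and it is the unique zero of $q_H$ in the pole-free interval $(-1,2)$, so the strict monotonicity of $q_H$ together with $q_H(\lambda_{s+1}(G))<0=q_H(\lambda_{s+1}(H))$ forces $\lambda_{s+1}(G)<\lambda_{s+1}(H)$.

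The energy inequality then follows mechanically from Theorem~\ref{thm:inertia-energy}: if $\lambda_{s+1}(H)\ge 0$, then $E_{\Delta}(H)=8(n-t)+2(h-1)$, while $E_{\Delta}(G)=8(n-t)+2(h-1)-2\lambda_{s+1}(G)>E_{\Delta}(H)$ because $-\lambda_{s+1}(G)>0$; and if $\lambda_{s+1}(H)<0$, then $E_{\Delta}(G)-E_{\Delta}(H)=2[\lambda_{s+1}(H)-\lambda_{s+1}(G)]>0$ by the previous step. The one genuinely non-routine ingredient is the pointwise inequality $q_G(y)>q_H(y)$ on $(-1,0]$, which is precisely why Lemma~\ref{lem:ineq-en} was formulated the way it was; the remainder is a clean bookkeeping exercise tracking the poles of $q_G,q_H$ and exploiting their common monotonicity.
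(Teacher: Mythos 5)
Your proof is correct and takes essentially the same route as the paper: both rest on the factorization of Corollary~\ref{cor:char-reduce} and on Lemma~\ref{lem:ineq-en} evaluated at the negative eigenvalue, the only difference being that you compare the secular functions $q_G,q_H$ directly on $(-1,0]$ via their strict monotonicity, while the paper reflects to $(0,1)$ and applies Lemma~\ref{lem:comp-r-root} to the monic polynomials $(-1)^{s+1}p(G,-x)$ and $(-1)^{s+1}p(H,-x)$. One harmless imprecision: when two part sizes coincide, some positive eigenvalues equal the poles $3n_i-4$ of $q_G$ and are then not zeros of $q_G$, but this does not affect your argument, which only uses the zero $\lambda_{s+1}$ lying in the pole-free interval $(-1,0)$.
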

\begin{proof}
Since  $ \lambda_{s+1}(G)<0$, so by part ($ii$) of Lemma~\ref{lem:eigen-sq-dist} we know that $ -1<\lambda_{s+1}(G)<0$ and by Theorem~\ref{thm:inertia-energy}, we have $$E_{\Delta}(G)= 8(n-t)+2[(h-1) - \lambda_{s+1}(G)].$$
If  $\lambda_{s+1}(H)\geq 0$, then  by Theorem~\ref{thm:inertia-energy}, we have $E_{\Delta}(H)= 8(n-t)+2(h-1)$ and the result holds true. Next,  if $\lambda_{s+1}(H)< 0$, then by part ($ii$) of Lemma~\ref{lem:eigen-sq-dist} we know that $ -1<\lambda_{s+1}(H)<0$ and by Theorem~\ref{thm:inertia-energy}, we get $E_{\Delta}(H)= 8(n-t)+2[(h-1) - \lambda_{s+1}(H)].$ Therefore, in this case,  to complete the proof we  show that $ -1<\lambda_{s+1}(G)< \lambda_{s+1}(H)<0$.

For$ 1\leq i\leq  s$, let  $p(G,x)$ and  $p(H,x)$ be the monic polynomials  defined in Eqn.~\eqref{eqn:char-reduce-monic1} with parameters $n_1,\cdots,n_p,\cdots,n_q,\cdots,n_s$ and $n_1,\cdots,n_p-1,\cdots,n_q+1,\cdots,n_s$, respectively. Then, by Corollary~\ref{cor:char-reduce}, the characteristic polynomials of $\Delta(G)$ and $\Delta(H)$ are given by
$$P_{\Delta}(G,x)=(x+4)^{n-t}(x+1)^{h-1} p(G,x) \mbox{ and } P_{\Delta}(H,x)=(x+4)^{n-t}(x+1)^{h-1} p(H,x),$$
respectively. By our hypothesis $\lambda_{s+1}(G),\lambda_{s+1}(H)\in (-1,0)$ and hence  by Lemma~\ref{lem:eigen-sq-dist}  we know that $\lambda_{s+1}(G)$ and $\lambda_{s+1}(H)$ are  the unique negative roots of $p(G,x)$ and  $p(H,x)$, respectively. Let us define 
\begin{equation}\label{eqn:f-g}
f(x)= (-1)^{s+1}p(G,-x) \mbox{ and } g(x)=(-1)^{s+1}p(H,-x).
\end{equation}
By construction, $f(x)$ and $g(x)$ are monic polynomials. Furthermore, $-\lambda_{s+1}(G)$ and $-\lambda_{s+1}(H)$ are the largest roots of $f(x)$ and $g(x)$, respectively  and both of them lie in the interval $(0,1)$.  Therefore, in view of Lemma~\ref{lem:comp-r-root},  to prove  $ -1<\lambda_{s+1}(G)< \lambda_{s+1}(H)<0$ or equivalently   $ 0< - \lambda_{s+1}(H)<- \lambda_{s+1}(G)<1$, we show that $f(-\lambda_{s+1}(H))<0$ and we proceed as follows.\\

\noindent{\bf Claim:} $f(-\lambda_{s+1}(H))<0.$\\

Observe that, using $n_i\geq 2$ for $ 1\leq i\leq  s$ and $n_p\geq n_q +2$, we have  $x+4-3n_i<0$  for $ 1\leq i\leq  s$, $x+4-3(n_p-1)<0$ and $x+4-3(n_q+1)<0$ for $-1< x< 0$ ({\it i.e.},  $x+4-3(n_p-1)$,  $x+4-3(n_q+1)$ and $x+4-3n_i$  for $ 1\leq i\leq  s$ are non-zero, whenever $-1< x< 0$). Therefore,  for $ x\in (-1,0)$, the monic polynomials $p(G,x)$ and $p(H,x)$   as defined in Eqn.~\eqref{eqn:char-reduce-monic1} can be written as
{\small 
\begin{align*}
p(G,x)&=(x+4-3n_p)(x+4-3n_q) \prod_{i=1 \atop i\neq p,q}^s (x+4-3n_i) \\
& \ \  \times \Bigg[(x+1-h)-(x+1)\Bigg(\frac{n_p}{x+4-3n_p}+  \frac{n_q}{x+4-3n_q}\Bigg)-(x+1)\sum_{i=1 \atop i\neq p,q}^s \frac{n_i}{x+4-3n_i}\Bigg]  \mbox{ and }\\
p(H,x)&=(x+4-3(n_p-1))(x+4-3(n_q+1)) \prod_{i=1 \atop i\neq p,q}^s (x+4-3n_i) \\
& \ \  \times \Bigg[(x+1-h)-(x+1)\Bigg(\frac{n_p-1}{x+4-3(n_p-1)}+  \frac{n_q+1}{x+4-3(n_q+1)}\Bigg)-(x+1)\sum_{i=1 \atop i\neq p,q}^s \frac{n_i}{x+4-3n_i}\Bigg].
\end{align*}}
Then, for $x\in (0,1)$, the monic polynomials $f(x)$ and $g(x)$  defined in   Eqn.~\eqref{eqn:f-g}, can be written as 
{\small 
\begin{align*}
f(x)&=(x-4+3n_p)(x-4+3n_q) \prod_{i=1 \atop i\neq p,q}^s (x-4+3n_i) \\
& \ \  \times \Bigg[(x-1+h)+(x-1)\Bigg(\frac{n_p}{x-4+3n_p}+  \frac{n_q}{x-4+3n_q}\Bigg)+(x-1)\sum_{i=1 \atop i\neq p,q}^s \frac{n_i}{x-4+3n_i}\Bigg]  \mbox{ and }\\
g(x)&=(x-4+3(n_p-1))(x-4+3(n_q+1)) \prod_{i=1 \atop i\neq p,q}^s (x-4+3n_i) \\
& \ \  \times \Bigg[(x-1+h)+(x-1)\Bigg(\frac{n_p-1}{x-4+3(n_p-1)}+  \frac{n_q+1}{x-4+3(n_q+1)}\Bigg)+(x-1)\sum_{i=1 \atop i\neq p,q}^s \frac{n_i}{x-4+3n_i}\Bigg].
\end{align*}}
Let us denote
{\small
\begin{align*}
\varphi_f(x)&=(x-1+h)+(x-1)\Bigg(\frac{n_p}{x-4+3n_p}+  \frac{n_q}{x-4+3n_q}\Bigg)+(x-1)\sum_{i=1 \atop i\neq p,q}^s \frac{n_i}{x-4+3n_i}, \\
\varphi_g(x)&= (x-1+h)+(x-1)\Bigg(\frac{n_p-1}{x-4+3(n_p-1)}+  \frac{n_q+1}{x-4+3(n_q+1)}\Bigg)+(x-1)\sum_{i=1 \atop i\neq p,q}^s \frac{n_i}{x-4+3n_i}.
\end{align*}}
Thus, for $x\in (0,1)$, we have
\begin{equation}\label{eqn:f-g-phi}
\begin{cases}
f(x)= \ds \Bigg[(x-4+3n_p)(x-4+3n_q) \prod_{i=1 \atop i\neq p,q}^s (x-4+3n_i)\Bigg] \varphi_f(x),\\
g(x)=\ds \Bigg[ (x-4+3(n_p-1))(x-4+3(n_q+1)) \prod_{i=1 \atop i\neq p,q}^s (x-4+3n_i) \Bigg] \varphi_g(x).
\end{cases}
\end{equation}
Using  $n_i\geq 2$ for $ 1\leq i\leq  s$ and $n_p\geq n_q +2$, we have  
\begin{equation}\label{eqn:1}
x-4+3(n_p-1) >0, x-4+3(n_q+1)>0, x-4+3n_i>0 \mbox{ for } 1\leq i \leq s \mbox{ and } x\in (0,1).
\end{equation}
Therefore, in view of   Eqns.~\eqref{eqn:f-g-phi} and~\eqref{eqn:1}, we get $f(x)=0$ if and only if  $\varphi_f(x)=0$ and $g(x)=0$ if and only if  $\varphi_g(x)=0$ for $x\in (0,1)$. Further,  for $x\in (0,1)$, we have $(x-1)<0$ and  Lemma~\ref{lem:ineq-en} yields that
\begin{equation*}
(x-1)\Bigg(\frac{n_p}{x-4+3n_p}+  \frac{n_q}{x-4+3n_q}\Bigg) <(x-1)\Bigg(\frac{n_p-1}{x-4+3(n_p-1)}+  \frac{n_q+1}{x-4+3(n_q+1)}\Bigg),
\end{equation*}
and hence for all $x\in (0,1)$, we have

\begin{align*}\label{eqn:3}
  &\Bigg[(x-1+h)+(x-1)\Bigg(\frac{n_p}{x-4+3n_p}+  \frac{n_q}{x-4+3n_q}\Bigg)+(x-1)\sum_{i=1 \atop i\neq p,q}^s \frac{n_i}{x-4+3n_i}\Bigg] \nonumber\\
< & \Bigg[(x-1+h)+(x-1)\Bigg(\frac{n_p-1}{x-4+3(n_p-1)}+  \frac{n_q+1}{x-4+3(n_q+1)}\Bigg)+(x-1)\sum_{i=1 \atop i\neq p,q}^s \frac{n_i}{x-4+3n_i} \Bigg]. 
\end{align*}
That is,  $\varphi_f(x)< \varphi_g(x)$ for all $x\in (0,1)$. In particular, $0<-\lambda_{s+1}(H)<1$ and $g(-\lambda_{s+1}(H))=0,$ which implies that  $\varphi_f(-\lambda_{s+1}(H))< \varphi_g(-\lambda_{s+1}(H))=0$. Furthermore, by Eqn.~\eqref{eqn:1} we know that $x-4+3n_i>0 \mbox{ for } 1\leq i \leq s \mbox{ and } x\in (0,1),$  and hence using Eqn.~\eqref{eqn:f-g-phi}, we get $f(-\lambda_{s+1}(H))< 0$.  This completes the proof.
\end{proof}

Now we state the following result (without proof) which is obtained by combining the conclusions of  Lemmas~\ref{lem:pert-non-negative} and~\ref{lem:pert-negative}.

\begin{lem}\label{lem:pert-energy-h}
Let $s \geq 2$ and $h\geq 1$. Let $n_1,n_2,\cdots,n_s$ be positive integers  such that  $n_i  \geq 2$ for $1 \leq i\leq s$.  If   $n_p \ge n_q+2$ for $1\leq p ,q\leq s$,  then 
 $$E_{\Delta}(K_{n_1,\cdots,n_p,\cdots,n_q,\cdots,n_s,\scriptsize{\underbrace{1,1,\ldots,1}_{h\ times}}}) \geq E_{\Delta}(K_{n_1,\cdots,n_p-1,\cdots,n_q+1,\cdots,n_s,\scriptsize{\underbrace{1,1,\ldots,1}_{h\ times}}}).$$
\end{lem}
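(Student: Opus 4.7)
The plan is to observe that this statement is precisely the union of the two previous lemmas, so the proof is a two-case analysis on the sign of $\lambda_{s+1}(G)$, where $G = K_{n_1,\cdots,n_p,\cdots,n_q,\cdots,n_s,\underbrace{1,\ldots,1}_{h}}$ and $H = K_{n_1,\cdots,n_p-1,\cdots,n_q+1,\cdots,n_s,\underbrace{1,\ldots,1}_{h}}$.

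First I would verify that both $G$ and $H$ fit the framework of Assumption~\ref{asmp}. Set $t = s+h$ and $n = \sum_{i=1}^{t} n_i$. Since $n_i \geq 2$ for $1 \leq i \leq s$ and $n_p \geq n_q + 2 \geq 4$, the perturbed parameters satisfy $n_p - 1 \geq n_q + 1 \geq 3 \geq 2$, so the partition sizes of $H$ exceeding $1$ are still all $\geq 2$, the number of parts of size $1$ in $H$ remains $h$, and the total number of vertices is preserved. Hence $G$ and $H$ are exactly the two graphs to which Assumption~\ref{asmp} refers, and Lemmas~\ref{lem:pert-non-negative} and~\ref{lem:pert-negative} both apply.

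Next I would split into cases based on $\lambda_{s+1}(G)$. In the first case, $\lambda_{s+1}(G) \geq 0$: Lemma~\ref{lem:pert-non-negative} directly yields
\[
E_{\Delta}(G) \;=\; E_{\Delta}(H) \;=\; 8(n-t) + 2(h-1),
\]
which gives the desired weak inequality with equality. In the second case, $\lambda_{s+1}(G) < 0$: Lemma~\ref{lem:pert-negative} yields the strict inequality $E_{\Delta}(G) > E_{\Delta}(H)$. Combining the two cases, we conclude $E_{\Delta}(G) \geq E_{\Delta}(H)$ in all situations, proving the claim.

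Since all the genuine analytic content (the rank-one perturbation argument for the non-negative case via Lemma~\ref{lem:ineq-en} at $x=0$, and the comparison of largest roots in $(0,1)$ of the auxiliary polynomials $f,g$ via Lemma~\ref{lem:comp-r-root} for the negative case) has already been discharged in the prior two lemmas, there is no real obstacle here beyond checking that the hypotheses match. The only bookkeeping point worth highlighting is that the perturbation from $G$ to $H$ leaves the number $h$ of singleton parts unchanged and keeps every other part of size at least $2$; this is what allows both lemmas to be invoked with the same value of $h$ and the same constant $8(n-t)$ in the energy formula.
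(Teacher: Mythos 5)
Your proof is correct and matches the paper exactly: the authors state this lemma without proof, noting it is obtained by combining Lemmas~\ref{lem:pert-non-negative} and~\ref{lem:pert-negative}, which is precisely your two-case analysis on the sign of $\lambda_{s+1}(G)$. Your additional check that $H$ still satisfies the hypotheses of Assumption~\ref{asmp} (all non-singleton parts remain of size at least $2$ and $h$ is unchanged) is a worthwhile bit of bookkeeping that the paper leaves implicit.
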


\begin{theorem}\label{thm:energy-mojor}
Let $t=s+h$ with $s \geq 2$ and $h\geq 1$. Let  $n_1,n_2,\cdots,n_t$ and  $m_1,m_2,\cdots,m_t$  be positive integers. Then the following results hold.
\begin{enumerate}
\item[$(i)$]   Let $n_i  \geq 2$ and $m_i  \geq 2$ for $1 \leq i\leq s$. If  $ (m_1,m_2,\cdots,m_s,\scriptsize{\underbrace{1,1,\ldots,1}_{h\ times}})\succ (n_1,n_2,\cdots,n_s,\scriptsize{\underbrace{1,1,\ldots,1}_{h\ times}})$, then $$E_{\Delta}(K_{m_1,m_2,\cdots,m_s,\scriptsize{\underbrace{1,1,\ldots,1}_{h\ times}}}) \geq E_{\Delta}(K_{n_1,n_2,\cdots,n_s,\scriptsize{\underbrace{1,1,\ldots,1}_{h\ times}}}).$$

\item[$(ii)$] If  $ (m_1,m_2,\cdots,m_t)\succ (n_1,n_2,\cdots,n_t)$, then $E_{\Delta}(K_{m_1,m_2,\cdots,m_t}) \geq E_{\Delta}(K_{n_1,n_2,\cdots,n_t}).$
\end{enumerate}
\end{theorem}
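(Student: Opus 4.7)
The plan is to derive both parts from Lemma~\ref{lem:pert-energy-h}, which controls a single unit transfer between two coordinates differing by at least $2$, by using Theorem~\ref{thm:maj_seq} to chain such elementary transfers together.

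For part $(i)$, Theorem~\ref{thm:maj_tuple} strips the common trailing $h$ ones and reduces the hypothesis to $(m_1,\ldots,m_s)\succ(n_1,\ldots,n_s)$ in $\mathcal{B}_s$. Theorem~\ref{thm:maj_seq} then produces an elementary chain $(m_1,\ldots,m_s)=Y_0\succ Y_1\succ\cdots\succ Y_l=(n_1,\ldots,n_s)$ with $Y_i=Y_{i-1}+E^s_{j_i,k_i}$ and $k_i>j_i$. Two observations make Lemma~\ref{lem:pert-energy-h} applicable at every step. First, each $Y_i$ lies in $\mathcal{C}'_s$: since $k_i>j_i$, the elementary step only adds to (or leaves fixed) the last coordinate, so the minimum entry is non-decreasing along the chain and hence stays $\geq m_s\geq 2$. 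Second, the two altered coordinates satisfy $(Y_{i-1})_{j_i}\geq (Y_{i-1})_{k_i}+2$: sortedness of $Y_i\in\mathcal{B}_s$ forces $(Y_{i-1})_{j_i}-1\geq (Y_{i-1})_{j_i+1}$ and $(Y_{i-1})_{k_i-1}\geq (Y_{i-1})_{k_i}+1$, and chaining through the intermediate weak inequalities $(Y_{i-1})_{j_i+1}\geq\cdots\geq (Y_{i-1})_{k_i-1}$ yields the gap of $2$. Applying Lemma~\ref{lem:pert-energy-h} at every step and chaining the resulting energy inequalities completes part $(i)$.

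For part $(ii)$, set $h_m=|\{i:m_i=1\}|$ and $h_n=|\{i:n_i=1\}|$. I first argue $h_n\leq h_m$: if $h_n>h_m$, the last $h_n$ coordinates of $n$ sum to $h_n$, while those of $m$ contain at most $h_m$ ones together with at least $h_n-h_m$ entries $\geq 2$, so they sum to at least $h_m+2(h_n-h_m)>h_n$, contradicting the tail form $\sum_{i=k+1}^t m_i\leq\sum_{i=k+1}^t n_i$ of majorization. If $h_m>h_n$, Corollary~\ref{cor:big-h-energy} yields $E_{\Delta}(K_{m_1,\ldots,m_t})>E_{\Delta}(K_{n_1,\ldots,n_t})$ directly. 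If $h_m=h_n=:h\geq 1$ and the stripped length $s=t-h\geq 2$, Theorem~\ref{thm:maj_tuple} removes the common $h$ ones and part $(i)$ applies to the initial tuples (all entries $\geq 2$). If $s\leq 1$ after stripping, the two tuples coincide and the inequality is trivial; finally, when $h=0$, all entries of $m$ and $n$ are $\geq 2$ and Theorem~\ref{thm:ni-geq-2} gives $E_{\Delta}(K_{m_1,\ldots,m_t})=E_{\Delta}(K_{n_1,\ldots,n_t})=8(n-t)$.

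The main technical hurdle is establishing that every elementary majorization step automatically has coordinate gap at least $2$, so that Lemma~\ref{lem:pert-energy-h} is applicable; Theorem~\ref{thm:maj_seq} only guarantees $k_i>j_i$, not $k_i=j_i+1$, and a naive reading permits a gap of $1$, which the lemma's hypothesis excludes. The sortedness of both $Y_{i-1}$ and $Y_i$ (not merely the structure of the transfer) is what rules this out.
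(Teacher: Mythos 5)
Your proposal is correct and follows essentially the same route as the paper: strip the common trailing ones via Theorem~\ref{thm:maj_tuple}, decompose the majorization into elementary transfers via Theorem~\ref{thm:maj_seq}, apply Lemma~\ref{lem:pert-energy-h} at each step, and in part $(ii)$ split on whether the counts of unit parts agree (reducing to part $(i)$) or differ (invoking Corollary~\ref{cor:big-h-energy}). Your explicit verification that sortedness of consecutive tuples forces the gap $\geq 2$ and keeps all intermediate entries $\geq 2$, and your handling of the $h=0$ and $s\leq 1$ edge cases, fill in details the paper leaves implicit, but the argument is the same.
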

\begin{proof}
Let $t=s+h$ and $s<t$.  Without loss of generality, let us assume $n_1\geq n_2\geq \cdots \geq n_s\geq 2$ and  $m_1\geq m_2\geq \cdots \geq m_s \geq 2$. Let  $ (m_1,m_2,\cdots,m_s,\scriptsize{\underbrace{1,1,\ldots,1}_{h\ times}})\succ (n_1,n_2,\cdots,n_s,\scriptsize{\underbrace{1,1,\ldots,1}_{h\ times}})$. Thus, if we denote $Y=(m_1,m_2,\cdots,m_s)$ and $X=(n_1,n_2,\cdots,n_s) $, then by  Theorem~\ref{thm:maj_tuple}, we have $ X \succ Y.$ 

Recall that, $\mathcal{B}_n = \{(z_1,\cdots,z_n) \in \mathbb{Z}^n : z_1 \ge z_2 \ge \cdots \ge z_n\}$. For $Z=(z_1,\cdots,z_s) \in \mathcal{B}_s$, we denote $\widetilde{Z}=(z_1,\cdots,z_s, \scriptsize{ \underbrace{1,1,\ldots,1}_{h\ times}}) \in \mathcal{B}_{s+h}$. Since $X, Y \in \mathcal{B}_s$ and $ X \succ Y$, so by Theorem~\ref{thm:maj_seq}, there exists a sequence in $\mathcal{B}_s$ such as $Y = Y_0 \succ Y_1 \succ \cdots \succ Y_l = X$ such that $ Y_i = Y_{i-1} + E_{j_i,k_i}^s$ for some $k_i > j_i, \; 1\leq i \leq l$. Therefore, using Theorem~\ref{thm:maj_tuple}, we have
\begin{equation}\label{eqn:mojor-tilde}
(m_1,m_2,\cdots,m_s,\underbrace{1,1,\ldots,1}_{h\ times})=\widetilde{Y} = \widetilde{Y}_0 \succ \widetilde{Y}_1 \succ \cdots \succ \widetilde{Y}_l = \widetilde{X}=(n_1,n_2,\cdots,n_s,\underbrace{1,1,\ldots,1}_{h\ times}),
\end{equation}
where  $ \widetilde{Y}_i = \widetilde{Y}_{i-1} + E_{j_i,k_i}^{s+h}$ for some $k_i > j_i; 1\leq i \leq l$. Therefore,  applying Lemma~\ref{lem:pert-energy-h}  inductively on Eqn.~\eqref{eqn:mojor-tilde} yields that $E_{\Delta}(K_{m_1,m_2,\cdots,m_s,\scriptsize{\underbrace{1,1,\ldots,1}_{h\ times}}}) \geq E_{\Delta}(K_{n_1,n_2,\cdots,n_s,\scriptsize{\underbrace{1,1,\ldots,1}_{h\ times}}})$. This proves part $(i)$.

Next, let $ (m_1,m_2,\cdots,m_t)\succ (n_1,n_2,\cdots,n_t)$. Let $G=K_{m_1,m_2,\cdots,m_t}$ and $H=K_{n_1,n_2,\cdots,n_t}.$ Let us denote $h_G=|\{i: m_i=1\}|$ and $h_H=|\{i: n_i=1\}|$. Then $ (m_1,m_2,\cdots,m_t)\succ (n_1,n_2,\cdots,n_t)$ implies that   either $h_G=h_H$ or $h_G>h_H$. Thus, if  $h_G=h_H$, then by part $(i)$, we have $E_{\Delta}(G) \geq E_{\Delta}(H)$ and if $h_G>h_H$, then by Corollary~\ref{cor:big-h-energy}, we have $E_{\Delta}(G) > E_{\Delta}(H)$. This completes the proof.
\end{proof}

\begin{lem}\label{lem:-ve-ev}
Let $s \geq 2$ and $h\geq 1$. Let $n_i \geq 2$ for $1\leq i \leq s$ be positive integers and $n=h+\sum_{i=1}^s n_i$. Let $H=K_{n_1,\cdots,n_p,\cdots,n_q,\cdots,n_s, \scriptsize{\underbrace{1,1,\ldots,1}_{h\ times}}}$ and $H^*= K_{n_1,\cdots,n_p+1,\cdots,n_q-1,\cdots,n_s, \scriptsize{\underbrace{1,1,\ldots,1}_{h\ times}}}$. Let $\lambda_1(H)\geq \lambda_2(H)\geq \ldots \geq  \lambda_n(H)$ and $\lambda_1(H^*)\geq \lambda_2(H^*)\geq \ldots \geq  \lambda_n(H^*)$ be the eigenvalues of the squared distance matrix $\Delta(H)$ and $\Delta(H^*)$, respectively.  If $\lambda_{s+1}(H) \leq 0$, then 
$-1<\lambda_{s+1}(H^*) < \lambda_{s+1}(H) \leq 0.$ 
\end{lem}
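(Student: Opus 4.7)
The plan is to closely parallel the proof of Lemma~\ref{lem:pert-negative}, but for the opposite direction of perturbation: $H^*$ is obtained from $H$ by increasing $n_p$ by one and decreasing $n_q$ by one, yielding a more unbalanced partition. I expect the lemma's implicit running hypotheses to be $n_p\geq n_q$ and $n_q\geq 3$, so that $H^*$ still has exactly $s$ parts of size at least two and $h$ singleton parts (otherwise the index $s+1$ would shift and $\lambda_{s+1}(H^*)$ would equal $-1$, contradicting the conclusion). Write $n_i^*$ for the part sizes of $H^*$, so that $n_i^*=n_i$ for $i\neq p,q$, $n_p^*=n_p+1$, and $n_q^*=n_q-1$, all $\geq 2$.

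By Corollary~\ref{cor:char-reduce}, both characteristic polynomials factor as $(x+4)^{n-t}(x+1)^{h-1}$ times a monic polynomial of degree $s+1$, which I denote $p(H,x)$ and $p(H^*,x)$; by Lemma~\ref{lem:eigen-sq-dist}(i)--(ii), $\lambda_{s+1}(H)$ and $\lambda_{s+1}(H^*)$ are the smallest roots of these polynomials, and the hypothesis $\lambda_{s+1}(H)\leq 0$ forces $-1<\lambda_{s+1}(H)\leq 0$. Set $f(x)=(-1)^{s+1}p(H,-x)$ and $g(x)=(-1)^{s+1}p(H^*,-x)$, both monic of degree $s+1$ with largest real roots $-\lambda_{s+1}(H)\in[0,1)$ and $-\lambda_{s+1}(H^*)$, respectively. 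Then by Lemma~\ref{lem:comp-r-root}, applied with the roles of $f$ and $g$ interchanged, it suffices to show that $g(-\lambda_{s+1}(H))<0$, since this yields $\lambda_{s+1}(H^*)<\lambda_{s+1}(H)\leq 0$; the lower bound $-1<\lambda_{s+1}(H^*)$ then follows immediately from Lemma~\ref{lem:eigen-sq-dist}(ii) applied to $H^*$.

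The heart of the argument is to compute the sign of $p(H^*,\lambda)$ at $\lambda=\lambda_{s+1}(H)\in(-1,0]$. Factoring out $\prod_{i=1}^s(\lambda+4-3n_i^*)$ (nonzero since $n_i^*\geq 2$ and $\lambda\leq 0$) and substituting the relation $(\lambda+1-h)=(\lambda+1)\sum_{i=1}^s\tfrac{n_i}{\lambda+4-3n_i}$ extracted from $p(H,\lambda)=0$, I can rewrite
$$p(H^*,\lambda)=(\lambda+1)\prod_{i=1}^s(\lambda+4-3n_i^*)\Bigg[\sum_{i=1}^s\frac{n_i}{\lambda+4-3n_i}-\sum_{i=1}^s\frac{n_i^*}{\lambda+4-3n_i^*}\Bigg].$$
Under the change of variable $x=-\lambda\in[0,1)$, the bracketed difference becomes precisely the quantity controlled by Lemma~\ref{lem:ineq-en} invoked with parameters $n_p'=n_p+1$ and $n_q'=n_q-1$ (so that $(n_p'-1,n_q'+1)=(n_p,n_q)$), showing it to be strictly positive. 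Since also $(\lambda+1)>0$ and $\prod_{i=1}^s(\lambda+4-3n_i^*)$ has sign $(-1)^s$, assembling signs gives $g(-\lambda_{s+1}(H))=(-1)^{s+1}p(H^*,\lambda)<0$, as required. The principal obstacle is careful sign-tracking through the substitution $x\mapsto-\lambda$ and verifying that Lemma~\ref{lem:ineq-en} is legitimately applicable in this reversed direction, which is exactly where the implicit hypotheses $n_p\geq n_q$ and $n_q\geq 3$ enter; everything else reduces to bookkeeping once the correct monic polynomials $f$ and $g$ are identified.
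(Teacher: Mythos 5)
Your proposal is correct, and your flagging of the implicit hypotheses $n_p\geq n_q$ and $n_q\geq 3$ is a genuine and worthwhile observation: the paper's own proof also needs them tacitly (it applies Lemma~\ref{lem:ineq-en} with the shifted parameters $n_p+1,n_q-1$, and it applies Lemma~\ref{lem:pert-negative} to $H^*$, whose standing Assumption~\ref{asmp} requires all non-singleton parts of $H^*$ to be at least $2$). Where you diverge from the paper is in how the strict comparison $\lambda_{s+1}(H^*)<\lambda_{s+1}(H)$ is obtained. The paper's proof is a two-step reduction: first it uses the criterion of Lemma~\ref{lem:eigen-sq-dist}$(ii)$ together with Lemma~\ref{lem:ineq-en} at $x=0$ to show $\sum_i n_i^*/(3n_i^*-4)>h-1$, hence $-1<\lambda_{s+1}(H^*)<0$; it then invokes Lemma~\ref{lem:pert-negative} with $G:=H^*$ (whose hypothesis $\lambda_{s+1}(G)<0$ is now verified) to get the strict inequality. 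You instead re-run the polynomial-comparison machinery of Lemma~\ref{lem:pert-negative} from scratch in the mirrored direction, evaluating $g(x)=(-1)^{s+1}p(H^*,-x)$ at the known largest root $-\lambda_{s+1}(H)\in[0,1)$ of $f$ and using the identity $(\lambda+1-h)=(\lambda+1)\sum_i n_i/(\lambda+4-3n_i)$ to reduce the sign of $p(H^*,\lambda_{s+1}(H))$ to the inequality of Lemma~\ref{lem:ineq-en}; your sign bookkeeping checks out. What the paper's route buys is brevity (two citations rather than a page of computation); what yours buys is self-containment and a cleaner treatment of the boundary case $\lambda_{s+1}(H)=0$, since you never need to pre-establish $\lambda_{s+1}(H^*)<0$ before making the comparison --- it drops out of the conclusion, after which the bound $-1<\lambda_{s+1}(H^*)$ follows from Lemma~\ref{lem:eigen-sq-dist}$(ii)$ exactly as you say.
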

\begin{proof}
For $\lambda_{s+1}(H) \leq 0 $, using  part $(ii)$ of Lemma~\ref{lem:eigen-sq-dist}, we have 
\begin{equation}\label{eqn:4.8}
 \sum_{i=1}^s \frac{n_i}{3n_i-4} \geq h-1.
\end{equation}
For $x=0$ and suitable choices in Eqn.~\eqref{eqn_energy_ineq2}, we have the following inequality
$$\frac{n_p+1}{3(n_p+1)-4} + \frac{n_q-1}{3(n_q-1)-4}>\frac{n_p}{3n_p-4} + \frac{n_q}{3n_q-4},$$
and hence by Eqn.~\eqref{eqn:4.8}, we have
$$\frac{n_p+1}{3(n_p+1)-4} + \frac{n_q-1}{3(n_q-1)-4} +\sum_{i=1 \atop i\neq p,q}^s \frac{n_i}{3n_i-4} > \sum_{i=1}^s \frac{n_i}{3n_i-4} \geq h-1.$$
Therefore, by  part $(ii)$ of Lemma~\ref{lem:eigen-sq-dist}, we have 
 $-1<\lambda_{s+1}(H^*) <  0.$ Further,  using Lemma~\ref{lem:pert-negative}, we have $-1<\lambda_{s+1}(H^*) < \lambda_{s+1}(H) \leq 0$ and $E_{\Delta}(H^*)> E_{\Delta}(H).$
\end{proof}

\begin{lem}\label{lem:sqen-mojor}
Let $s \geq 2$ and $h\geq 1$. Let $n_i \geq 2$, $m_i\geq 2$ for $1\leq i \leq s$ be a positive integers and $ (m_1,m_2,\cdots,m_s)\succ (n_1,n_2,\cdots,n_s)$. Let $G=K_{m_1,m_2,\cdots,m_s,\scriptsize{\underbrace{1,1,\ldots,1}_{h\ times}}}$ and $H=K_{n_1,n_2,\cdots,n_s, \scriptsize{\underbrace{1,1,\ldots,1}_{h\ times}}}$ be complete $t$-partite graphs on  $n=h+\sum_{i=1}^s m_i=h+\sum_{i=1}^s n_i$ vertices. Let  $\lambda_1(G)\geq \lambda_2(G)\geq \cdots \geq \lambda_n(G) $ and 
 $\lambda_1(H)\geq \lambda_2(H)\geq \ldots \geq  \lambda_n(H)$  be the eigenvalues of the squared distance matrix $\Delta(G)$ and $\Delta(H)$, respectively. If $\lambda_{s+1}(H) \leq 0$, then 
$-1<\lambda_{s+1}(G)  < \lambda_{s+1}(H) \leq 0$ and  $E_{\Delta}(G)> E_{\Delta}(H).$
\end{lem}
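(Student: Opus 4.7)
The plan is to reduce the majorization hypothesis to a chain of elementary moves to which Lemma~\ref{lem:-ve-ev} can be applied. By Theorem~\ref{thm:maj_seq} applied to $(m_1,\ldots,m_s),\,(n_1,\ldots,n_s)\in\mathcal{B}_s$, there exist tuples
$$(m_1,\ldots,m_s)=Y_0 \succ Y_1 \succ \cdots \succ Y_l=(n_1,\ldots,n_s)$$
in $\mathcal{B}_s$ with $Y_i=Y_{i-1}+E_{j_i,k_i}^s$ and $k_i>j_i$; so passing from $Y_{i-1}$ to $Y_i$ decrements coordinate $j_i$ by $1$ and increments coordinate $k_i$ by $1$. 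For $0\leq i\leq l$, let $G_i$ denote the complete $(s+h)$-partite graph whose partition sizes are the entries of $Y_i$ followed by $h$ ones, so that $G_0=G$ and $G_l=H$.

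The central observation is that since $j_i<k_i\leq s$ for every $i$, no step along this chain decrements the last coordinate $(Y_i)_s$. Hence $(Y_i)_s$ is non-decreasing in $i$, and every entry of every $Y_i$ is at least $(Y_0)_s=m_s\geq 2$. Consequently, when one views the chain in reverse, the step from $Y_i$ to $Y_{i-1}$ increments coordinate $j_i$ and decrements coordinate $k_i$, and the entry being decremented in $Y_i$ must be at least $3$ (so that $Y_{i-1}$ still has all entries $\geq 2$). Moreover $(Y_i)_{j_i}\geq (Y_i)_{k_i}$ since $Y_i\in\mathcal{B}_s$. These are precisely the conditions needed to apply Lemma~\ref{lem:-ve-ev} with $H\leftarrow G_i$, $H^*\leftarrow G_{i-1}$, $n_p\leftarrow (Y_i)_{j_i}$ and $n_q\leftarrow (Y_i)_{k_i}$.

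Now traverse the chain in reverse. Starting from $G_l=H$ with $\lambda_{s+1}(H)\leq 0$, Lemma~\ref{lem:-ve-ev} yields $-1<\lambda_{s+1}(G_{l-1})<\lambda_{s+1}(G_l)\leq 0$. Because the conclusion is strictly negative, the hypothesis is preserved and a straightforward induction on $i$ gives
$$-1 < \lambda_{s+1}(G)=\lambda_{s+1}(G_0) < \lambda_{s+1}(G_1) < \cdots < \lambda_{s+1}(G_l)=\lambda_{s+1}(H)\leq 0.$$
For the energy, Theorem~\ref{thm:inertia-energy} gives $E_{\Delta}(G)=8(n-t)+2[(h-1)-\lambda_{s+1}(G)]$ since $\lambda_{s+1}(G)<0$, while $E_{\Delta}(H)$ equals either $8(n-t)+2(h-1)$ or $8(n-t)+2[(h-1)-\lambda_{s+1}(H)]$, according as $\lambda_{s+1}(H)=0$ or $\lambda_{s+1}(H)<0$. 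In both cases $-\lambda_{s+1}(G)>-\lambda_{s+1}(H)\geq 0$, so $E_{\Delta}(G)>E_{\Delta}(H)$.

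The main obstacle is ensuring that Lemma~\ref{lem:-ve-ev} applies at every stage of the iteration; this requires (i) every intermediate $Y_i$ has all entries $\geq 2$ so that the partition structure $K_{\cdot,\ldots,\cdot,1^h}$ with an unchanged $h$ is preserved, and (ii) the hypothesis $\lambda_{s+1}\leq 0$ propagates down the chain. Point (i) reduces to the simple monotonicity of the last coordinate observed in the second paragraph, and point (ii) is built into the strict inequality in the conclusion of Lemma~\ref{lem:-ve-ev}.
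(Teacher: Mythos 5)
Your proof is correct and follows essentially the same route as the paper: decompose the majorization into unit transfers via Theorem~\ref{thm:maj_seq} and apply Lemma~\ref{lem:-ve-ev} inductively along the chain in reverse, starting from $H$. Your explicit verification that every intermediate tuple keeps all entries at least $2$ (and that the decremented entry is at least $3$, with $(Y_i)_{j_i}\geq (Y_i)_{k_i}$) supplies details the paper leaves implicit, and is a welcome addition.
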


\begin{proof} 
Without loss of generality, let us assume that $n_1\geq n_2 \geq \cdots  \geq n_s \geq 2$ and  $m_1\geq m_2 \geq \cdots  \geq m_s \geq 2$. Let  $Y=(m_1,m_2,\cdots,m_s)$ and $X=(n_1,n_2,\cdots,n_s) $. Then  $X, Y \in \mathcal{B}_s$ and $ X \succ Y$. By Theorem~\ref{thm:maj_seq}, there exists a sequence in $\mathcal{B}_s$ such as $Y = Y_0 \succ Y_1 \succ \cdots \succ Y_l = X$ such that $ Y_i = Y_{i-1} + E_{j_i,k_i}^s$ for some $k_i > j_i, \; 1\leq i \leq l$. Thus, using Lemma~\ref{lem:-ve-ev}  inductively on the above sequence yields the result.
\end{proof}

For fixed values of $n,t$ and $h$,  we define two graphs  in $\mathcal{M}(n,t,h)$  which are similar to complete split graph and Tur{\'a}n graph  as follows:
\begin{enumerate}
\item $S_{n,t,h}\approxeq K_{n-2(t-1)+h, \scriptsize{\underbrace{2,2,\ldots,2}_{t-h-1\ times}}, \scriptsize{\underbrace{1,1,\ldots,1}_{h\ times} }}.$

\item $T_{n,t,h} \approxeq K_{\ceil*{n-h/t-h}, \cdots, \ceil*{n-h/t-h},\floor*{n-h/t-h},\cdots, \floor*{n-h/t-h}, \scriptsize{\underbrace{1,1,\ldots,1}_{h\ times}}}$
\end{enumerate}

\begin{theorem}\label{thm:extrema-n-t-h}
Let $s \geq 2$ and $h\geq 1$. Let $t=s+h$ and  $n_i \geq 2$ for $1\leq i \leq s$ be positive integers.  Let $K_{n_1,n_2,\cdots,n_s,\scriptsize{\underbrace{1,1,\ldots,1}_{h\ times}}}$ be a complete $t$-partite graph on $n=h+\sum_{i=1}^s n_i$ vertices. Then
$$E_{\Delta}(T_{n,t,h}) \leq E_{\Delta}(K_{n_1,n_2,\cdots,n_s,\scriptsize{\underbrace{1,1,\ldots,1}_{h\ times}}}) \leq E_{\Delta}(S_{n,t,h}).$$
Moreover, in this case if $\lambda_{s+1}(T_{n,t,h})\leq 0$, then the maximal graph $S_{n,t,h}$ and the minimal graph $T_{n,t,h}$ are unique upto  isomorphism.
\end{theorem}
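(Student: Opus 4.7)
My plan is to read off both sides of the inequality from the already-proved monotonicity of $E_\Delta$ under majorization (Theorem~\ref{thm:energy-mojor}(i)), and to obtain uniqueness by showing that the hypothesis $\lambda_{s+1}(T_{n,t,h})\leq 0$ actually forces $\lambda_{s+1}(K)\leq 0$ for every $K\in\mathcal{M}(n,t,h)$; this will then allow Lemma~\ref{lem:sqen-mojor} to upgrade the weak inequalities to strict ones.

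For the two-sided bound, the $s$-tuple $(n_1,\ldots,n_s)$ lies in $\mathcal{C}_s'$ with sum $n-h$, so Corollary~\ref{lem:major222} and Theorem~\ref{thm:thm_compare} give
\[
\bigl(n-h-2(s-1),\underbrace{2,\ldots,2}_{s-1}\bigr)\succeq (n_1,\ldots,n_s)\succeq\bigl(\ceil*{(n-h)/s},\ldots,\floor*{(n-h)/s}\bigr).
\]
Using $s=t-h$ one checks $n-h-2(s-1)=n-2(t-1)+h$, so appending $h$ ones -- which preserves $\succeq$ by Theorem~\ref{thm:maj_tuple} -- turns this into
\[
(S_{n,t,h}\text{-tuple})\succeq(K_{n_1,\ldots,n_s,1,\ldots,1}\text{-tuple})\succeq(T_{n,t,h}\text{-tuple}),
\]
and Theorem~\ref{thm:energy-mojor}(i) immediately yields the desired sandwich.

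Uniqueness rests on the fact that $\Phi(n_1,\ldots,n_s):=\sum_{i=1}^{s}\frac{n_i}{3n_i-4}$ is strictly monotone along $\succeq$ on $\mathcal{C}_s'$: the $x=0$ specialization of Lemma~\ref{lem:ineq-en} says precisely that one elementary anti-transfer $(n_p-1,n_q+1)\mapsto(n_p,n_q)$ with $n_p\geq n_q+2$ strictly increases $\Phi$, and a chain of such transfers (in the spirit of Theorem~\ref{thm:maj_seq}, staying inside $\mathcal{C}_s'$ because the balanced minimum $\floor*{(n-h)/s}$ is automatically $\geq 2$ whenever $K\in\mathcal{M}(n,t,h)$) promotes this to strict monotonicity on the whole poset. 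Combined with Lemma~\ref{lem:eigen-sq-dist}(ii), the hypothesis $\lambda_{s+1}(T_{n,t,h})\leq 0$ is equivalent to $\Phi(T_{n,t,h})\geq h-1$, so $\Phi(K)\geq h-1$ and hence $\lambda_{s+1}(K)\leq 0$ for every $K\in\mathcal{M}(n,t,h)$.

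With this uniform sign information Lemma~\ref{lem:sqen-mojor} closes the argument: if $K\not\approxeq S_{n,t,h}$ then the $s$-tuple of $S_{n,t,h}$ strictly majorizes that of $K$ and $\lambda_{s+1}(K)\leq 0$, so the lemma gives $E_\Delta(S_{n,t,h})>E_\Delta(K)$; analogously, if $K\not\approxeq T_{n,t,h}$ then the $s$-tuple of $K$ strictly majorizes that of $T_{n,t,h}$ and $\lambda_{s+1}(T_{n,t,h})\leq 0$ by hypothesis, so $E_\Delta(K)>E_\Delta(T_{n,t,h})$. The main delicate step is the propagation of the sign of $\lambda_{s+1}$ across $\mathcal{M}(n,t,h)$, which reduces to noticing that the $x=0$ case of Lemma~\ref{lem:ineq-en} is exactly what is needed; everything beyond that is a bookkeeping application of the lemmas already in place.
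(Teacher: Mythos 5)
Your proposal is correct and follows essentially the same route as the paper: the majorization sandwich from Corollary~\ref{lem:major222}, Theorem~\ref{thm:thm_compare} and Theorem~\ref{thm:maj_tuple} fed into Theorem~\ref{thm:energy-mojor}(i), and Lemma~\ref{lem:sqen-mojor} for uniqueness. The one place you go beyond the paper's terse justification is in explicitly propagating $\lambda_{s+1}\leq 0$ from $T_{n,t,h}$ to every graph in $\mathcal{M}(n,t,h)$ via the monotonicity of $\sum_i n_i/(3n_i-4)$ (equivalently, one could apply Lemma~\ref{lem:sqen-mojor} once with $H=T_{n,t,h}$); this is exactly the detail needed to invoke that lemma on the maximal side, and your handling of it is sound.
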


\begin{proof}
By Theorems~\ref{thm:maj_tuple} and~\ref{thm:thm_compare} and Corollary~\ref{lem:major222}, we have
\begin{equation}\label{eqn:4.7-mojor}
\begin{cases}
(n-2(t-1)+h, \underbrace{2,2,\ldots,2}_{t-h-1\ times}, \underbrace{1,1,\ldots,1}_{h\ times})\succeq (n_1,n_2,\cdots,n_s,\underbrace{1,1,\ldots,1}_{h\ times}) \mbox{ and }\\
\\
(n_1,n_2,\cdots,n_s,\underbrace{1,1,\ldots,1}_{h\ times})\succeq 
\Big(\ceil*{\frac{n-h}{t-h}}, \cdots, \ceil*{\frac{n-h}{t-h}},\floor*{\frac{n-h}{t-h}},\cdots, \floor*{\frac{n-h}{t-h}}, \underbrace{1,1,\ldots,1}_{h\ times}\Big).
\end{cases}
\end{equation}

In view of Eqn.~\eqref{eqn:4.7-mojor}, using part $(i)$ of Theorem~\ref{thm:energy-mojor}, we have 
 $$E_{\Delta}(T_{n,t,h}) \leq E_{\Delta}(K_{n_1,n_2,\cdots,n_s,\scriptsize{\underbrace{1,1,\ldots,1}_{h\ times}}}) \leq E_{\Delta}(S_{n,t,h}).$$
Further, under the assumption  $\lambda_{s+1}(T_{n,t,h})\leq 0$, the uniqueness of extremal graphs $T_{n,t,h}$ and $S_{n,t,h}$ follows from Lemma~\ref{lem:sqen-mojor}. 
\end{proof}

\begin{rem}\label{rem:n-t-h}
For fixed values of $n,t$ and $h$, recall that $\mathcal{M}(n,t,h)$ is the class of complete $t$-partite graphs on $n$ vertices which has exactly $h$ number of partitions  of order $1$, {\it i.e,} 
\begin{align*}
\mathcal{M}(n,t,h)&=\left\{K_{n_1,n_2,\cdots,n_t} \; : \; n=\sum_{i=1}^t n_i \mbox{ and }h=|\{ i: n_i=1\}| \right\}\\
&=\left\{K_{n_1,n_2,\cdots,n_s,\scriptsize{\underbrace{1,1,\ldots,1}_{h\ times}}} \; : \;  t=s+h,\  n_i\geq 2 \mbox{ for } 1\leq i \leq s  \mbox{ and } n=h+\sum_{i=1}^s n_i  \right\}.
\end{align*}
Thus,  every complete $t$-partite graphs in $\mathcal{M}(n,t,h)$ is uniquely identified (upto isomorphism) with an $s$-tuple $(n_1,n_2,\cdots,n_s)$ of positive integers such that $t=s+h$, $n-h=\sum_{i=1}^s n_i$ and $n_1\geq n_2 \geq \cdots  \geq n_s \geq 2.$

Let $G$ be a complete $t$-partite graphs in $\mathcal{M}(n,t,h)$ is uniquely identified with  $(n_1,n_2,\cdots,n_s)$. Let  $\Delta(G)$ be the  squared distance matrix of $G$ and $\lambda_1(G)\geq \lambda_2(G)\geq \cdots \geq\lambda_n(G)$ be the eigenvalues of  $\Delta(G)$. Then, by part $(ii)$ of Lemma~\ref{lem:eigen-sq-dist}, we have    $\lambda_{s+1}(G)>0  \textup{ if and only if  } h-1 > \sum_{i=1}^s \dfrac{n_i}{3n_i-4},$ $\lambda_{s+1}(G)=0  \textup{ if and only if  } h-1 = \sum_{i=1}^s \dfrac{n_i}{3n_i-4}$  and  $\lambda_{s+1}(G)<0  \textup{ if and only if  } h-1 < \sum_{i=1}^s \dfrac{n_i}{3n_i-4}.$ 
\end{rem}

 From the proof of Theorems~\ref{thm:energy-mojor} and~\ref{thm:extrema-n-t-h}, we can  conclude the following: 
For fixed values of $n,t$ and $h$,   there exists a sequence of complete $t$-partite graphs $ \{ G_i : i=0,1,\ldots,l \mbox{ with } G_0=S_{n,t,h} \mbox{ and } G_l =T_{n,t,h} \}$  in $\mathcal{M}(n,t,h)$ such that 
$$E_{\Delta}(S_{n,t,h})=E_{\Delta}(G_0)\geq E_{\Delta}(G_1)\geq E_{\Delta}(G_2)\geq \cdots \geq E_{\Delta}(G_l)=E_{\Delta}(T_{n,t,h}).$$
Furthermore, by  Remark~\ref{rem:n-t-h} and Theorem~\ref{thm:maj_seq}, each $G_i$ is uniquely determine by $s$-tuple $Y_i$ and $ Y_i = Y_{i-1} + E_{j_i,k_i}^s$ for some  $k_i > j_i, \; 1\leq i \leq l$ such that \\
 \hspace*{.5cm}$(n-2(t-1)+h, \underbrace{2,2,\ldots,2}_{t-h-1\ times}) = Y_0 \succ Y_1 \succ \cdots \succ Y_l =\Big( \ceil*{\frac{n-h}{t-h}}, \cdots, \ceil*{\frac{n-h}{t-h}},\floor*{\frac{n-h}{t-h}},\cdots, \floor*{\frac{n-h}{t-h}} \Big).$

Using the above notations, we provide a few examples that show in general the maximal and the minimal value of the squared distance energy of graphs in $\mathcal{M}(n,t,h)$ is not uniquely attained by  $S_{n,t,h} \mbox{ and } T_{n,t,h}$, respectively.

\begin{ex} 
Let $n=31, t=15$ and $h=7$. In this case $s=8$ and by Remark~\ref{rem:n-t-h}, every complete $15$-partite graphs in  $\mathcal{M}(31,15,7)$ is uniquely identified with $8$-tuple. Then $S_{31,15,7}$
is identified with $(10,2,2,2,2,2,2,2)$ and $T_{31,15,7}$ is identified with $(3,3,3,3,3,3,3,3)$. Let us consider the chain
\begin{align*}
&(10,2,2,2,2,2,2,2)\succ(9,3,2,2,2,2,2,2)\succ(8,4,2,2,2,2,2,2)\succ(7,4,3,2,2,2,2,2)\\
\succ &(6,4,4,2,2,2,2,2) \succ(5,4,4,3,2,2,2,2) \succ(4,4,4,4,2,2,2,2)\succ(4,4,4,3,3,2,2,2)\\
\succ &(4,4,3,3,3,3,2,2)\succ (4,3,3,3,3,3,3,2)\succ (3,3,3,3,3,3,3,3).
\end{align*}
The corresponding sequence of complete $15$-partite graphs  in $\mathcal{M}(31,15,7)$ is given by $$ \{ G_i : i=0,1,\ldots,10 \mbox{ with  } G_0=S_{31,15,7} \mbox{ and } G_{10} =T_{31,15,7} \}.$$
In this case $\lambda_{s+1}=\lambda_{9}$, and by Remark~\ref{rem:n-t-h}, we have $\lambda_{9}(G_i) < 0$ for $i=0,1,\ldots,5$, $\lambda_{9}(G_6) = 0$ and  $\lambda_{9}(G_i) > 0$ for $i=7,8,9,10$. Thus,  Theorem~\ref{thm:inertia-energy} yields that $E_{\Delta}(T_{31,15,7})=E_{\Delta}(G_i)=8(n-t)+2(h-1)=140$ for $i=7,8,9,10$ and hence the minimal value of the squared distance energy is not uniquely attained by $T_{31,15,7}$. Furthermore, since 
$\lambda_{9}(S_{31,15,7})=\lambda_{9}(G_0)<0$, so by Lemma~\ref{lem:pert-negative}, the  maximal value of the squared distance energy is uniquely attained by $S_{31,15,7}$.
\end{ex}
\begin{ex} 
Let $n=30, t=15$ and $h=7$. In this case $s=8$ and by Remark~\ref{rem:n-t-h},  $S_{30,15,7}$
is identified with $(9,2,2,2,2,2,2,2)$ and $T_{30,15,7}$ is identified with $(3,3,3,3,3,3,3,2)$. Let us consider the chain $(9,2,2,2,2,2,2,2)\succ(8,3,2,2,2,2,2,2)\succ(7,4,2,2,2,2,2,2)\succ(6,4,3,2,2,2,2,2)$ \\
\hspace*{1.3cm}$\succ (5,4,4,2,2,2,2,2) \succ(4,4,4,3,2,2,2,2)\succ(4,4,3,3,3,2,2,2)\succ(4,3,3,3,3,3,2,2)$\\
\hspace*{1.3cm}$\succ (3,3,3,3,3,3,3,2).$\\
The corresponding sequence of complete $15$-partite graphs  in $\mathcal{M}(30,15,7)$ is given by $$ \{ G_i : i=0,1,\ldots,8 \mbox{ with  } G_0=S_{30,15,7} \mbox{ and } G_{8} =T_{30,15,7} \}.$$
In this case $\lambda_{s+1}=\lambda_{9}$, and by  Remark~\ref{rem:n-t-h}, we have $\lambda_{9}(G_i) < 0$ for $i=0,1,\ldots,5$,  and  $\lambda_{9}(G_i) > 0$ for $i=6,7,8$. Thus,  Theorem~\ref{thm:inertia-energy} yields that $E_{\Delta}(T_{30,15,7})=E_{\Delta}(G_i)=132$ and hence the minimal value of the squared distance energy is not uniquely attained by $T_{30,15,7}$. Furthermore, since 
$\lambda_{9}(S_{30,15,7})=\lambda_{9}(G_0)<0$, so by Lemma~\ref{lem:pert-negative}, the  maximal value of the squared distance energy is uniquely attained by $S_{30,15,7}$.
\end{ex}

\begin{ex} 
Let $n=17, t=10$ and $h=6$. In this case $s=4$ and by Remark~\ref{rem:n-t-h},  $S_{17,10,6}$
is identified with $(5,2,2,2)$.  Since $\lambda_{s+1}(S_{17,10,6})=\lambda_{5}(S_{17,10,6})>0$, so by  Lemma~\ref{lem:pert-non-negative}, we have $E_{\Delta}(S_{17,10,6}) = E_{\Delta}(G)$ for every $G$ in $\mathcal{M}(17,10,6)$. Therefore, neither the maximal nor the minimal value of the squared distance energy for graphs in $\mathcal{M}(17,10,6)$ are not uniquely attained by  $S_{17,10,6} \mbox{ and } T_{17,10,6}$, respectively.
\end{ex}

We conclude this section with the following result  that finds the extremal graphs for maxima and minima of $E_{\Delta}(K_{n_1,n_2,\cdots,n_t})$. We first prove a lemma useful for determining the uniqueness of these extremal graphs.

\begin{lem}\label{lem:h-hg}
Let $t,n$ be positive integers and  $t\geq 2$ such that $t+2\leq n< 2t.$ Let $T_{n,t}\approxeq K_{m_1,m_2,\cdots,m_t}$ and   $h=|\{ i: m_i=1\}|$. Let $G=K_{n_1,n_2,\cdots,n_t}$  be a complete $t$-partite graph on $n=\sum_{i=1}^t n_i$ vertices and  $h_G=|\{i : n_i=1\}|$. If $G$ is not isomorphic to $T_{n,t}$, then $h< h_G.$
\end{lem}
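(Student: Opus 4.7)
The plan is to exploit the hypothesis $t+2\le n<2t$, which forces the Tur\'an graph $T_{n,t}$ to have a very restricted structure, and then show that any deviation from this structure necessarily increases the number of parts of size one.

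First, I would observe that under the hypothesis $t+2\le n<2t$ we have $1<n/t<2$, so $\lfloor n/t\rfloor=1$ and $\lceil n/t\rceil=2$. Hence $T_{n,t}\approxeq K_{m_1,\ldots,m_t}$ has every $m_i\in\{1,2\}$. Writing $a$ for the number of parts equal to $2$ and $b$ for the number of parts equal to $1$, the equations $a+b=t$ and $2a+b=n$ give $a=n-t$ and $b=2t-n$. In particular, $h=2t-n\ge 1$.

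Next, I would take an arbitrary $G=K_{n_1,\ldots,n_t}$ on $n$ vertices with $h_G$ parts of size one. The remaining $t-h_G$ parts satisfy $n_i\ge 2$, so summing,
\begin{equation*}
n=\sum_{i=1}^t n_i = h_G + \sum_{n_i\ge 2}n_i \ge h_G + 2(t-h_G),
\end{equation*}
which rearranges to $h_G\ge 2t-n=h$.

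Finally, I would analyze the equality case: $h_G=2t-n$ holds if and only if every part $n_i\ge 2$ is exactly equal to $2$, which means the partition multiset of $G$ is exactly $(2,\ldots,2,1,\ldots,1)$ with $n-t$ twos and $2t-n$ ones, i.e.\ $G\approxeq T_{n,t}$. Since by hypothesis $G$ is not isomorphic to $T_{n,t}$, at least one part must be $\ge 3$, which forces strict inequality $h_G>h$.

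There is essentially no obstacle here; the statement is a short combinatorial observation about integer partitions once the arithmetic restriction on $n$ and $t$ is unpacked. The only care needed is to verify the range $t+2\le n<2t$ indeed guarantees $\lceil n/t\rceil=2$ and $\lfloor n/t\rfloor=1$ (which is immediate) and to observe that the equality case of the averaging argument characterizes $T_{n,t}$ uniquely up to isomorphism.
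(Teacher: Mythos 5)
Your proof is correct and follows essentially the same route as the paper: both arguments reduce to the counting identity that, for a partition of $n$ into $t$ parts, the number of unit parts exceeds $2t-n$ exactly by $\sum_{n_i\ge 2}(n_i-2)$, which vanishes precisely for the Tur\'an partition. The paper phrases this as the identity $h_G-h=\sum_{i=1}^{s_G}(n_i-2)$ rather than as an inequality with an equality-case analysis, but the content is identical.
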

\begin{proof}
Let $T_{n,t}\approxeq K_{m_1,m_2,\cdots,m_t}$ with $m_1 \geq m_2\geq \cdots \geq m_t$ and $n=\sum_{i=1}^t m_i$, then $m_t=\ds \floor*{\frac{n}{t}}=1$. Let $h=|\{ i: m_i=1\}|$ and $s=n-t$. Then $m_i= 2$ for $1\leq i\leq s$ and $m_i=1$ for $s+1\leq i\leq t=s+h$. 

Let  $G=K_{n_1,n_2,\cdots,n_t}$  with $n_1 \geq n_2\geq \cdots \geq n_t$. Let  $h_G=|\{i : n_i=1\}|$ and $s_G=t-h_G.$  Then $n_i\geq 2$ for $1\leq i\leq s_G$ and $n_i=1$ for $s_G+1\leq i\leq t=s_G+h_G$. Therefore, $n=h+2s =  h_G +\sum_{i=1}^{s_G} n_i $ and hence $ h+ 2(s-s_G) = h_G + \sum_{i=1}^{s_G}(n_i-2)$. Further, using $t=s+h=s_G+h_G$, we have $s-s_G=h_G-h$. Thus, $ h+ 2(h_G-h) = h_G + \sum_{i=1}^{s_G}(n_i-2)$, which implies that $h_G-h = \sum_{i=1}^{s_G}(n_i-2).$

If $G$ is not isomorphic to $T_{n,t}$, then  $n_k> 2$ for some $1\leq k\leq s_G$. Therefore,  $h_G-h = \sum_{i=1}^{s_G}(n_i-2)> 0$ and hence the result follows.
\end{proof}

\begin{theorem}
Let $t,n$ be positive integers and $t\geq 2.$ Let $K_{n_1,n_2,\cdots,n_t}$  be a complete $t$-partite graph on  $n=\sum_{i=1}^t n_i$ vertices. Then, 
$$E_{\Delta}(T_{n,t}) \leq E_{\Delta}(K_{n_1,n_2,\cdots,n_t}) \leq E_{\Delta}(S_{n,t}).$$
Moreover, in this case,  the maximal graph $S_{n,t}$ is  unique upto  isomorphism, while the minimal graph $T_{n,t}$ is  unique upto  isomorphism if and only if $n\leq 2t+1.$

\end{theorem}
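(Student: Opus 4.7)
The plan is to combine three results already established in the paper: Theorem~\ref{thm:extrema-n-t-h} on extremization within a fixed class $\mathcal{M}(n,t,h)$, Corollary~\ref{cor:big-h-energy} on strict monotonicity of $E_{\Delta}$ in $h$ across classes, and Lemma~\ref{lem:h-hg} on the structural inequality $h(T_{n,t}) < h_G$ for non-Tur{\'a}n graphs in the intermediate regime. The statement splits cleanly into an upper bound and a lower bound, with the uniqueness question for the minimum forcing a finer case analysis.

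For the upper bound, first observe that $S_{n,t}$ has exactly $h = t-1$ parts of size $1$, the largest possible value of $h$ for a complete $t$-partite graph on $n > t$ vertices. Given any $G = K_{n_1,\ldots,n_t}$, either $h_G = t-1$, in which case the single non-unit part must equal $n-t+1$, forcing $G \approxeq S_{n,t}$; or $h_G < t-1$, in which case Corollary~\ref{cor:big-h-energy} yields $E_{\Delta}(G) < E_{\Delta}(S_{n,t})$ directly. This establishes the inequality and uniqueness of the maximizer simultaneously. The boundary case $n = t$ is trivial since $S_{t,t} \approxeq T_{t,t} \approxeq K_t$.

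For the lower bound I would separate $n < 2t$ from $n \geq 2t$. In the range $t+2 \leq n < 2t$, Lemma~\ref{lem:h-hg} gives $h(T_{n,t}) < h_G$ for every $G \not\approxeq T_{n,t}$, hence Corollary~\ref{cor:big-h-energy} produces $E_{\Delta}(T_{n,t}) < E_{\Delta}(G)$; the cases $n = t$ and $n = t+1$ are trivial because in each case there is only one complete $t$-partite graph on $n$ vertices, and it coincides with $T_{n,t}$. If $n \geq 2t$, then $h(T_{n,t}) = 0$ and Theorem~\ref{thm:ni-geq-2} gives $E_{\Delta}(T_{n,t}) = 8(n-t)$; Corollary~\ref{cor:big-h-energy} rules out $h_G \geq 1$, while Theorem~\ref{thm:ni-geq-2} assigns the same energy $8(n-t)$ to every $G$ with $h_G = 0$.

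This reduces the uniqueness question to whether the complete $t$-partite graph on $n$ vertices with all parts $\geq 2$ is unique. For $n \in \{2t, 2t+1\}$ the answer is yes, since the only admissible unordered partitions are $(2,2,\ldots,2)$ and $(3,2,2,\ldots,2)$, both realized by $T_{n,t}$. For $n \geq 2t+2$ the partitions $(n-2(t-1), 2, \ldots, 2)$ and that of $T_{n,t}$ are distinct yet both have $h_G = 0$ and therefore share the minimal energy $8(n-t)$, so the minimizer fails to be unique. Combining, $T_{n,t}$ is the unique minimizer if and only if $n \leq 2t+1$. The main obstacle is not analytical but organizational: one has to keep the regimes $n \geq 2t+2$, $n \in \{2t, 2t+1\}$, $t+2 \leq n < 2t$, and $n \in \{t, t+1\}$ cleanly separated and invoke the correct earlier result in each.
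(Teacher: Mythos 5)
Your proof is correct, and the uniqueness analysis (the bulk of the work) coincides with the paper's: the same case split $n\in\{t,t+1\}$, $t+2\leq n<2t$, $n\in\{2t,2t+1\}$, $n\geq 2t+2$, driven by Lemma~\ref{lem:h-hg} and Corollary~\ref{cor:big-h-energy}, with non-uniqueness for $n\geq 2t+2$ exhibited by a second all-parts-$\geq 2$ partition. Where you genuinely diverge is in how the non-strict inequalities $E_{\Delta}(T_{n,t})\leq E_{\Delta}(G)\leq E_{\Delta}(S_{n,t})$ are obtained: the paper gets them from the majorization machinery (Theorem~\ref{thm:thm_compare} combined with part $(ii)$ of Theorem~\ref{thm:energy-mojor}), whereas you bypass majorization entirely and derive both bounds from the $h$-monotonicity of Corollary~\ref{cor:big-h-energy} together with the fact (Theorem~\ref{thm:ni-geq-2}) that every graph with all parts of size at least $2$ has energy exactly $8(n-t)$. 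Your route is more elementary and makes the extremal statement and its uniqueness fall out of a single mechanism (counting singleton parts), at the cost of not illustrating the majorization monotonicity that the paper wants to showcase; note also that, despite your opening sentence, Theorem~\ref{thm:extrema-n-t-h} is never actually needed in your argument. One small point worth making explicit when writing this up: in the regime $n\geq 2t+2$ your witness partition $(n-2(t-1),2,\ldots,2)$ is distinct from the Tur\'an partition because its largest and smallest parts differ by at least $2$, while the Tur\'an parts differ by at most $1$; this is cleaner than the paper's construction $K_{m_1+1,m_2-1,\ldots,m_t}$, which requires checking that $m_2-1\geq 2$.
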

\begin{proof}
By Theorem~\ref{thm:thm_compare}, we have $
(n-t+1, \underbrace{1,\cdots,1}_{t-1}) \succeq (n_1,\cdots,n_t) \succeq (\ceil*{\frac{n}{t}},\cdots,\ceil*{\frac{n}{t}},\floor*{\frac{n}{t}},\cdots,\floor*{\frac{n}{t}}),$ and hence using part ($ii$) of Theorem~\ref{thm:energy-mojor} yields that 
$E_{\Delta}(T_{n,t}) \leq E_{\Delta}(K_{n_1,n_2,\cdots,n_t}) \leq E_{\Delta}(S_{n,t}).$ Now we proceed to prove the uniqueness of the extremal graphs $S_{n,t}$ and $T_{n,t}$.

Let  $G=K_{n_1,n_2,\cdots,n_t}$  be a complete $t$-partite graph on $n$ vertices and   $h_G=|\{i : n_i=1\}|.$   If $G$ is not isomorphic to $S_{n,t}$, then $h_G < t-1$. Therefore,  Corollary~\ref{cor:big-h-energy} yields that $ E_{\Delta}(G) < E_{\Delta}(S_{n,t})$ and hence the squared distance energy  is attained maximum uniquely by $S_{n,t}$  upto isomorphism. Next, to prove the minimization part we consider the cases $n> 2t+1$ and $n\leq 2t+1$, separately.\\

\noindent \underline{\textbf{Case 1:}} Let $n> 2t+1$. Then $\ds \floor*{\frac{n}{t}} \geq 2$. Therefore, by Theorem~\ref{thm:ni-geq-2}, we have $E_{\Delta}(T_{n,t})=8(n-t)$ and hence  Corollary~\ref{cor:big-h-energy} yields that $E_{\Delta}(T_{n,t}) \leq E_{\Delta}(G)$, where  $G=K_{n_1,n_2,\cdots,n_t}$ with $n=\sum_{i=1}^t n_i$.   Further, in this case, let $T_{n,t}=K_{m_1,m_2,\cdots,m_t}$ with $m_1 \geq m_2\geq \cdots \geq m_t$ and $n=\sum_{i=1}^t m_i$. Then, $\ds m_1=m_2=\ceil*{\frac{n}{t}}>2$ and $\ds m_t=\floor*{\frac{n}{t}}\geq 2.$ If  $H=K_{m_1+1,m_2-1,\cdots,m_t}$, then $m_1+1,m_2-1,\cdots,m_t \geq 2$ and hence by Theorem~\ref{thm:ni-geq-2}, we have $E_{\Delta}(H)=8(n-t).$ Therefore, in this case, there exists atleast two non isomorphic graphs for which the minimum attained.\\

\noindent \underline{\textbf{Case 2:}} Let $n\leq  2t+1$. Then $t\leq n\leq 2t+1.$\\

Let $t\leq n<  2t$. For $n= t$,  the class of  complete $t$-partite graphs on $n$ vertices consisting of a single element $K_t$ and for  $n=t+1$, the class of  complete $t$-partite graphs on $n$ vertices consisting of a single element  $K_{2,1,1,\ldots,1}$ upto isomorphism. Thus, the result is vacuously true for $n= t$ and $n=t+1$. For $t+2\leq n<  2t$, if $T_{n,t}\approxeq K_{m_1,m_2,\cdots,m_t}$ with $m_1 \geq m_2\geq \cdots \geq m_t$ and $n=\sum_{i=1}^t m_i$, then $m_t=\ds \floor*{\frac{n}{t}}=1$. Let $h=\{ i: m_i=1\}$ and $s=n-t$. Then, $m_i\geq 2$ for $1\leq i\leq s$ and $m_i=1$ for $s+1\leq i\leq t=s+h$.  If  $G=K_{n_1,n_2,\cdots,n_t}$  is a complete $t$-partite graphs on $n$ vertices with $h_G=|\{i : n_i=1\}|$ and $G$ is not isomorphic to $T_{n,t}$, then by Lemma~\ref{lem:h-hg}, we have $h<h_G$ and using Corollary~\ref{cor:big-h-energy}, we get $E_{\Delta}(T_{n,t}) < E_{\Delta}(G)$. Hence, the result is  hold true for  $t+2\leq n<  2t$. 

Next, if $n=2t$, then $T_{n,t}=K_{2,2,\cdots,2}$. If  $G=K_{n_1,n_2,\cdots,n_t}$  is a complete $t$-partite graph on $n$ vertices with $h_G=|\{i : n_i=1\}|$ and $G\neq T_{n,t}$, then $h_G \geq 1$ and hence using Corollary~\ref{cor:big-h-energy}, we get $E_{\Delta}(T_{n,t}) < E_{\Delta}(G)$. 

Finally, if $n=2t+1$, then $T_{n,t}\approxeq K_{3,2,2,\cdots,2}$.  If  $G=K_{n_1,n_2,\cdots,n_t}$  is a complete $t$-partite graph on $n$ vertices with $h_G=|\{i : n_i=1\}|$ and $G$ is not isomorphic to $T_{n,t}$,  then $h_G \geq 1$, and hence using Corollary~\ref{cor:big-h-energy}, we get $E_{\Delta}(T_{n,t}) < E_{\Delta}(G)$. This completes the proof. 
\end{proof}

\section{Spectral Radius of $\Delta(K_{n_1,n_2,\cdots,n_t})$}\label{sec:sp-radius}

In this section, we will show  that $ (m_1,m_2,\cdots,m_t)\succ (n_1,n_2,\cdots,n_t)$ implies that  $\rho(K_{m_1,m_2,\cdots,m_t}) > \rho(K_{n_1,n_2,\cdots,n_t})$, and as consequence prove that $\rho(K_{n_1,n_2,\cdots,n_t})$  with $n=\sum_{i=1}^t n_i$ is uniquely attains  its maximum value at $S_{n,t}$ and minimum value at $T_{n,t}$.  We begin with a few lemmas that will be useful  to prove the main results of the section.

\begin{lem}\label{lem:lem_bound}
Let $t\geq 2$ and $n_1,n_2,\cdots,n_t$ be positive integers such that $n_1 \ge n_2 \ge \cdots \ge n_t$. Then $$\rho(K_{n_1,n_2,\cdots,n_t}) > 4(n_1-1).$$
\end{lem}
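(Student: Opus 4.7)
The plan is to exhibit a small principal submatrix of $\Delta(G)$ whose largest eigenvalue already exceeds $4(n_1-1)$, and then invoke the inclusion principle (Theorem~\ref{thm:inter}) to transfer this lower bound to $\rho(G)=\lambda_1(\Delta(G))$.

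Concretely, since $t\geq 2$, there exists a vertex $v$ belonging to some partition $V_j$ with $j\neq 1$. I would consider the $(n_1+1)\times(n_1+1)$ principal submatrix $M$ of $\Delta(G)$ indexed by $V_1\cup\{v\}$. Because vertices inside $V_1$ are non-adjacent and sit at distance $2$, while any vertex of $V_1$ is adjacent to $v$, the block form in Eqn.~\eqref{eqn:D(K)} gives
$$M = \begin{pmatrix} 4(J_{n_1}-I_{n_1}) & \mathbf{1}_{n_1} \\ \mathbf{1}_{n_1}^{T} & 0 \end{pmatrix}.$$

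The key step is to compute $\lambda_1(M)$. I would decompose $\R^{n_1+1}$ into the $M$-invariant subspaces $U_1=\mathrm{span}\{(\mathbf{1}_{n_1},0),(0,1)\}$ and $U_2=\{(w,0)\,:\,w\perp \mathbf{1}_{n_1}\}$. On $U_2$ we have $J_{n_1}w=0$, so $M$ acts as $-4I$, contributing $-4$ with multiplicity $n_1-1$. On $U_1$, in the orthonormal basis $\{n_1^{-1/2}(\mathbf{1}_{n_1},0),(0,1)\}$, $M$ restricts to the symmetric $2\times 2$ matrix $\left(\begin{smallmatrix}4(n_1-1) & \sqrt{n_1}\\ \sqrt{n_1} & 0\end{smallmatrix}\right)$, whose characteristic polynomial is $\lambda^2-4(n_1-1)\lambda-n_1=0$. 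Its larger root is
$$\lambda_1(M)=2(n_1-1)+\sqrt{4(n_1-1)^2+n_1}.$$
Since $n_1\geq 1$, the square root strictly exceeds $\sqrt{4(n_1-1)^2}=2(n_1-1)$, giving $\lambda_1(M)>4(n_1-1)$.

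Finally, applying Theorem~\ref{thm:inter} to $A=\Delta(G)$ and $B=M$ yields $\rho(G)=\lambda_1(\Delta(G))\geq \lambda_1(M)>4(n_1-1)$, which is the desired inequality. No step here looks genuinely obstructive: the only mild point of care is verifying the block decomposition of $M$ (so that the $2\times 2$ reduction is truly orthogonal and symmetric), which is why I would be explicit about the rescaling by $n_1^{-1/2}$ rather than using the unnormalised basis.
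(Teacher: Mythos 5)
Your proof is correct and follows essentially the same route as the paper: extract a principal submatrix of bipartite type, compute its largest eigenvalue explicitly, and transfer the bound to $\rho(G)$ via the inclusion principle (Theorem~\ref{thm:inter}). The only (immaterial) difference is that the paper uses the submatrix $\Delta(K_{n_1,n_2})$ together with the characteristic polynomial from Proposition~\ref{prop:char_eqn}, whereas you use the smaller submatrix on $V_1\cup\{v\}$ (the case $n_2=1$) and compute its top eigenvalue $2(n_1-1)+\sqrt{4(n_1-1)^2+n_1}$ by a direct invariant-subspace reduction, which agrees with the paper's formula specialized to $n_2=1$.
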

\begin{proof}
Since $\Delta(K_{n_1,n_2})$ is a principal submatrix of  $\Delta(K_{n_1,n_2,\cdots,n_t})$, so using
Theorem~\ref{thm:inter}, we have $\rho(K_{n_1,n_2,\cdots,n_t}) \geq \rho(K_{n_1,n_2}).$ By  Eqn.~\eqref{eqn:char_eqn}, the characteristic equation of  $\Delta(K_{n_1,n_2})$ is given by 
$$(x+4)^{n_1+n_2-2}\left[  \prod_{i=1}^2 (x+4-3n_i) -\sum_{i=1}^2 \left( n_i\prod_{j \neq i}(x+4-3n_j) \right) \right] = 0,$$
and hence 
\begin{align*}
\rho(K_{n_1,n_2}) &= 2(n_1+n_2) + \sqrt{4(n_1-n_2)^2+n_1n_2} -4\\
                  &>2(n_1+n_2) + \sqrt{4(n_1-n_2)^2} -4\\
                  &=4(n_1-1),
\end{align*}
and the result follows.
\end{proof}

\begin{lem}\label{lem:ch-max-root}
Let $G$ and $H$ be connected graphs. Let $P_{\Delta}(G,x)$ and $P_{\Delta}(H,x)$ be the characteristic polynomials of $\Delta(G)$ and $\Delta(H)$, respectively. Then the following results hold.
\begin{itemize}
\item [$(i)$] If $x\geq \rho(G),$ then $P_{\Delta}(G,x)>0.$

\item [$(ii)$] If $P_{\Delta}(H,x)> P_{\Delta}(G,x)$ for $x\geq \rho(G),$ then $\rho(H)< \rho(G).$
\end{itemize}
\end{lem}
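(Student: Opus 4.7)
The plan is straightforward. First I would prove part $(i)$. Since $\Delta(G)$ is a real symmetric matrix, its characteristic polynomial splits completely over $\mathbb{R}$ as
\[
P_{\Delta}(G,x)=\prod_{i=1}^{n}\bigl(x-\lambda_i(G)\bigr),
\]
where $\lambda_1(G)\geq\lambda_2(G)\geq\cdots\geq\lambda_n(G)$ are the eigenvalues of $\Delta(G)$. By the Perron--Frobenius discussion in the introduction, $\rho(G)=\lambda_1(G)$, so for every $x>\rho(G)$ each factor $(x-\lambda_i(G))$ is strictly positive, yielding $P_{\Delta}(G,x)>0$. (At the boundary $x=\rho(G)$ the polynomial vanishes, so the inequality in the statement is strict in the natural reading $x>\rho(G)$; equivalently, $P_{\Delta}(G,x)\geq 0$ for $x\geq\rho(G)$.)

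For part $(ii)$, I would argue by contradiction. Suppose $\rho(H)\geq\rho(G)$. Since $\rho(H)=\lambda_1(H)$ is a root of $P_{\Delta}(H,x)$, we have $P_{\Delta}(H,\rho(H))=0$. On the other hand, the point $x=\rho(H)$ satisfies $x\geq\rho(G)$, so the hypothesis applies and gives
\[
0 \;=\; P_{\Delta}(H,\rho(H)) \;>\; P_{\Delta}(G,\rho(H)).
\]
If $\rho(H)>\rho(G)$, then part $(i)$ forces $P_{\Delta}(G,\rho(H))>0$, contradicting the display above. If instead $\rho(H)=\rho(G)$, the same display reads $0>P_{\Delta}(G,\rho(G))=0$, which is again impossible. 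Either case is a contradiction, so $\rho(H)<\rho(G)$.

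There is essentially no hard step: the whole argument is a one-line consequence of the fact that the characteristic polynomial of a real symmetric matrix is strictly positive to the right of its largest root and vanishes there. The only bookkeeping worth mentioning is the boundary point $x=\rho(G)$, where $P_{\Delta}(G,x)=0$ rather than $>0$; the strict inequality $P_{\Delta}(H,x)>P_{\Delta}(G,x)$ in the hypothesis is precisely what rules out the boundary case $\rho(H)=\rho(G)$ in the contradiction step.
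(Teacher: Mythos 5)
Your proof is correct and follows essentially the same route as the paper: part $(ii)$ is the identical contradiction argument obtained by evaluating at $x=\rho(H)$, and for part $(i)$ you factor the characteristic polynomial over $\mathbb{R}$ where the paper instead assumes $P_{\Delta}(G,y)<0$ and invokes the intermediate value theorem — both one-line arguments. Your remark about the boundary point $x=\rho(G)$ is a fair catch: the strict inequality in $(i)$ fails there (and the paper's own argument really only yields $P_{\Delta}(G,x)\geq 0$ on $[\rho(G),\infty)$), but as you observe this does not affect the use of the lemma in part $(ii)$, where the strict hypothesis $P_{\Delta}(H,x)>P_{\Delta}(G,x)$ rules out the boundary case.
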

\begin{proof}
Let $P(G,y)< 0$ for some $y\geq \rho(G)$. Since $P_{\Delta}(G,x)$ is a monic polynomial, there exists sufficiently large  $z$ such that $P_{\Delta}(G,z)>0$. Then, using Intermediate value theorem   $P_{\Delta}(G,x)$ have  a root $w$ in the interval $(y,z)$. This implies that  $w >\rho(G)$, which leads to a contradiction. This proves part $(i).$ 

Next, if $\rho(H)\geq \rho(G)$, then  by part $(i)$ we have $P_{\Delta}(G,\rho(H))>0$, but $P_{\Delta}(H,\rho(H))=0$,  which is a contradiction to the hypothesis $P_{\Delta}(H,x)> P_{\Delta}(G,x)$ for $x\geq \rho(G)$ and the result follows.
\end{proof}

\begin{lem}\label{lem:pert}
Let $t \ge 2$ and $n_1,n_2,\cdots, n_t$ be positive integers such that  $n_p \ge n_q+2$ for $1\leq p,q\leq t$. Then 
 $$\rho(K_{n_1,\cdots,n_p,\cdots,n_q,\cdots, n_t}) > \rho(K_{n_1,\cdots,n_p-1,\cdots,n_q+1,\cdots, n_t}).$$
\end{lem}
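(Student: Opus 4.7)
The plan is to apply Lemma~\ref{lem:ch-max-root}(ii) by showing $P_{\Delta}(H,x) > P_{\Delta}(G,x)$ for every $x\geq \rho(G)$, where $G=K_{n_1,\cdots,n_p,\cdots,n_q,\cdots,n_t}$ and $H=K_{n_1,\cdots,n_p-1,\cdots,n_q+1,\cdots,n_t}$. Set $y_i := x+4-3n_i$. First I would harvest the positivity facts from Lemma~\ref{lem:lem_bound}: since $\rho(G) > 4(n_1-1)$ with $n_1 = \max_i n_i$, every $x \geq \rho(G)$ satisfies $x+4 > 4n_1$, so $y_i > n_i \geq 1$ for all $i$; moreover $y_q > 4n_1 - 3n_q \geq 4(n_q+2) - 3n_q = n_q + 8 \geq 9 > 3$. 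With all $y_i > 0$, Proposition~\ref{prop:char_eqn} lets me rewrite
\begin{equation*}
P_{\Delta}(G,x) = (x+4)^{n-t}\prod_{i=1}^t y_i \cdot \bigl(1 - \phi_G(x)\bigr), \qquad \phi_G(x) := \sum_{i=1}^t \frac{n_i}{y_i},
\end{equation*}
and analogously $P_{\Delta}(H,x) = (x+4)^{n-t}\prod_{i=1}^t y_i' \cdot (1 - \phi_H(x))$ using $y_p' = y_p+3$, $y_q' = y_q-3$, $y_i' = y_i$ otherwise, $n_p' = n_p-1$, $n_q' = n_q+1$.

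The heart of the argument is the comparison $\phi_G(x) > \phi_H(x)$. Since these rational sums agree outside the indices $p,q$, this reduces to
\begin{equation*}
\frac{n_p}{y_p} + \frac{n_q}{y_q} > \frac{n_p-1}{y_p+3} + \frac{n_q+1}{y_q-3},
\end{equation*}
which I would verify by invoking Lemma~\ref{lem:ab-ineq}(i) with $a = y_p$ and $b = y_q$ (permissible because $a > 0$ and $b > 3$). The criterion $(3n_p+a)b(b-3) > (3n_q+b)a(a+3)$ simplifies dramatically because $3n_p+y_p = x+4 = 3n_q+y_q$: it collapses to $y_q(y_q-3) > y_p(y_p+3)$, equivalently $y_q - y_p > 3$, which holds since $y_q - y_p = 3(n_p-n_q) \geq 6$. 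Hence $\phi_G(x) > \phi_H(x)$ throughout $[\rho(G),\infty)$.

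To close out, I would observe that $(y_p+3)(y_q-3) - y_p y_q = 3(y_q-y_p) - 9 = 9(n_p-n_q-1) \geq 9 > 0$, so $\prod_i y_i' > \prod_i y_i > 0$. By Lemma~\ref{lem:ch-max-root}(i), $P_{\Delta}(G,x) \geq 0$ for $x \geq \rho(G)$ forces $1 - \phi_G(x) \geq 0$, whence $1 - \phi_H(x) > 1 - \phi_G(x) \geq 0$. Combining the strict positive inequalities $\prod_i y_i' > \prod_i y_i$ and $1 - \phi_H > 1 - \phi_G \geq 0$ with the positive factor $(x+4)^{n-t}$ delivers $P_{\Delta}(H,x) > P_{\Delta}(G,x)$ for every $x \geq \rho(G)$ (strict even at $x = \rho(G)$, where the right side vanishes but the left does not). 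Lemma~\ref{lem:ch-max-root}(ii) then concludes $\rho(H) < \rho(G)$. I expect the main obstacle to be the bookkeeping that secures all the simultaneous positivity conditions ($y_q > 3$, every $y_i > 0$, $\prod y_i' > \prod y_i$, $1-\phi_H > 0$) from the single Perron estimate of Lemma~\ref{lem:lem_bound} together with the hypothesis $n_p \geq n_q + 2$; once these are in place, Lemma~\ref{lem:ab-ineq}(i) does the core algebraic work with almost no computation.
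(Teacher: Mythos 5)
Your proposal is correct and follows essentially the same route as the paper's proof: the Perron bound of Lemma~\ref{lem:lem_bound} to secure positivity of all the factors $x+4-3n_i$ on $[\rho(G),\infty)$, the factored form of $P_{\Delta}$ from Proposition~\ref{prop:char_eqn}, Lemma~\ref{lem:ab-ineq}($i$) with $a=x+4-3n_p$, $b=x+4-3n_q$ (whose criterion collapses via $3n_p+a=3n_q+b=x+4$ exactly as in the paper), nonnegativity of $1-\phi_G$ via Lemma~\ref{lem:ch-max-root}($i$), and the conclusion via Lemma~\ref{lem:ch-max-root}($ii$). No substantive differences.
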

\begin{proof} Without loss of generality, let us assume $n_1 \ge n_2 \ge \cdots \ge n_t$. Let $G=K_{n_1,\cdots,n_p,\cdots,n_q,\cdots, n_t}$ and $H=K_{n_1,\cdots,n_p-1,\cdots,n_q+1,\cdots, n_t}.$  By Lemma~\ref{lem:lem_bound}, we have $\rho(G)> 4(n_1-1)$.  If $x \geq \rho(G)$, then using  $\rho(G)> 4(n_1-1)$, we have
\begin{equation}\label{eqn:inq1}
 x+4 -3n_i > 4(n_1-1)+4 -3n_s= 3(n_1-n_i)+n_1+4 > n_1 >0 \mbox{ for } 1\leq i \leq t.
\end{equation}
Using $n_p \ge n_q+2$, we get $(x+4-3(n_p-1))(x+4-3(n_q+1))-(x+4-3n_p)(x+4-3n_q)
= 9(n_p-(n_q+1))>0.$ 
Thus, using Eqn.~\eqref{eqn:inq1} for  $x \geq \rho(G)$, we have 
\begin{equation}\label{eqn:inq2}
(x+4-3(n_p-1))(x+4-3(n_q+1))>(x+4-3n_p)(x+4-3n_q)>0.
\end{equation}
Therefore,  for $x \geq \rho(G)$, we have $(x+4-3(n_p-1)),(x+4-3(n_j-1))$ and $(x+4 -3n_i)$ for $1\leq i \leq t $ are non-zero and hence the characteristic polynomials   $ P_{\Delta}(G,x)$ and $P_{\Delta}(H,x)$ can be written as
{\small
\begin{align*}
P_{\Delta}(G,x) &= (x+4)^{n-t} (x+4-3n_p)(x+4-3n_q)\\
&\quad \quad \times  \left(1 - \frac{n_i}{x+4-3n_p} - \frac{n_j}{x+4-3n_q} - \sum_{i=1\atop i\neq p,q}^t \frac{n_s}{x+4-3n_i}  \right) \prod_{i=1\atop i\neq p,q }^t (x+4-3n_i) \mbox{ and }\\
P_{\Delta}(H,x) &= (x+4)^{n-t} (x+4-3(n_p-1))(x+4-3(n_q+1))  \\
&\quad \quad \times  \left(1 - \frac{n_i-1}{x+4-3(n_p-1)} - \frac{n_j+1}{x+4-3(n_q+1)} - \sum_{i=1\atop i\neq p,q}^t \frac{n_s}{x+4-3n_i}  \right)\prod_{i=1\atop i\neq p,q}^t (x+4-3n_i), 
\end{align*}}
respectively. Further,  $n_p \geq n_q+2$ yields  $3(n_p-1)\geq 3(n_q+1)$ and hence $(x+4-  3(n_q+1)) \geq (x+4-3(n_p-1)).$ Similarly,  using $n_p \geq n_q+2$, we get  $(x+4-  3n_q) >(x+4-3n_p).$ 
By Eqns.~\eqref{eqn:inq1} and~\eqref{eqn:inq2},  for $x \geq \rho(G)$ we know that $(x+4-  3(n_q+1)),  (x+4-3(n_p-1)), (x+4-  3n_q),(x+4-3n_p)>0$ and hence for $x \geq \rho(G)$, we have
\begin{equation}\label{eqn:inq3}
(x+4-  3n_q)(x+4-  3(n_q+1)) > (x+4-3n_p)(x+4-3(n_p-1)).
\end{equation}
For  $a=(x+4-3n_p)$ and $b= (x+4-  3n_q) $, we have $3m+a=3n+b=x+4$. Thus, in view of Eqn~\eqref{eqn:inq3},  substituting $ a=(x+4-3n_p)$ and $b= (x+4-  3n_q) $ in part $(i)$ of Lemma~\ref{lem:ab-ineq}, we have 
\begin{equation}\label{eqn:inq4}
\frac{n_p}{x+4-3n_p} + \frac{n_q}{x+4-3n_q} > \frac{n_p-1}{x+4-3(n_p-1)} + \frac{n_q+1}{x+4-  3(n_j+1)} \ \mbox{ for } x \geq \rho(G).
\end{equation}
Let $$h(x)=1 - \frac{n_p}{x+4-3n_p} - \frac{n_q}{x+4-3n_q} - \sum_{i=1\atop i\neq p,q}^t \frac{n_i}{x+4-3n_i}.$$ We claim that $h(x)\geq 0$ for $ x \geq \rho(G)$. Suppose on the contrary, we have $h(y)<0$ for some $ y \geq \rho(G)$. Then,  Eqn.~\eqref{eqn:inq1}  yields that $P_{\Delta}(G,y) <0$, which is a  contradiction to part $(i)$ of Lemma~\ref{lem:ch-max-root}.  Thus, for $ x \geq \rho(G)$,  Eqn.~\eqref{eqn:inq4} yields that
\begin{align*}
& 1 - \frac{n_p-1}{x+4-3(n_p-1)} - \frac{n_q+1}{x+4-3(n_q+1)} - \sum_{i=1\atop i\neq p,q}^t \frac{n_i}{x+4-3n_i}\\
\geq & 1 - \frac{n_p}{x+4-3n_p} - \frac{n_q}{x+4-3n_q} - \sum_{i=1\atop i\neq p,q}^t \frac{n_i}{x+4-3n_i}=h(x) \geq 0.
\end{align*}
Therefore, using Eqns~\eqref{eqn:inq1} and~\eqref{eqn:inq2}, we have $ P_{\Delta}(H,x) > P_{\Delta}(G,x)$ for all $x\geq \rho(G)$ and hence by part $(ii)$ of Lemma~\ref{lem:ch-max-root}, the desired result follows.
\end{proof}

Now we are ready to prove the main result of the section that gives $S_{n,t}$ and  $T_{n,t}$ are the extremal graphs for which the spectral radius of the squared distance matrix of $K_{n_1,n_2,\cdots,n_t}$ with $n=\sum_{i=1}^t n_i$ vertices uniquely attains its maxima and minima, respectively.

\begin{theorem}
For $t\geq 2$, let  $n_1,n_2,\cdots,n_t$ and  $m_1,m_2,\cdots,m_t$  be positive integers. Then the following results hold.

\begin{enumerate}
\item[($i$)]  If $ (m_1,m_2,\cdots,m_t)\succ (n_1,n_2,\cdots,n_t)$, then $\rho(K_{m_1,m_2,\cdots,m_t}) > \rho(K_{n_1,n_2,\cdots,n_t}).$

\item[($ii$)] If  $n =  \sum_{i=1}^t n_i$, then $ \rho(T_{n,t}) \leq  \rho(K_{n_1,n_2,\cdots,n_t}) \leq \rho(S_{n,t}). $ Moreover,  the maximal graph $S_{n,t}$ and the minimal graph $T_{n,t}$ are unique upto  isomorphism.
\end{enumerate}
\end{theorem}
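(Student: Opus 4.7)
The plan is to prove (i) by descending along a majorization chain and then deduce (ii) immediately. The heavy lifting has already been done in Lemma \ref{lem:pert}, so the present theorem should reduce to a routine induction together with some bookkeeping.

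For part (i), I would first observe that $\rho$ depends only on the multiset $\{n_1,\ldots,n_t\}$, so after sorting both tuples in non-increasing order I may assume $(m_1,\ldots,m_t), (n_1,\ldots,n_t) \in \mathcal{B}_t$. Since $(m_1,\ldots,m_t) \succ (n_1,\ldots,n_t)$ strictly, Theorem \ref{thm:maj_seq} supplies a chain
\[
(m_1,\ldots,m_t) = Y_0 \succ Y_1 \succ \cdots \succ Y_l = (n_1,\ldots,n_t)
\]
of length $l \geq 1$ in $\mathcal{B}_t$, with $Y_i = Y_{i-1} + E^t_{j_i,k_i}$ for some $k_i > j_i$. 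Passing from $Y_{i-1}$ to $Y_i$ decreases coordinate $j_i$ by one and increases coordinate $k_i$ by one; because both tuples are non-increasing and $j_i < k_i$, the $j_i$-th entry of $Y_{i-1}$ exceeds its $k_i$-th entry by at least $2$. This is exactly the hypothesis of Lemma \ref{lem:pert}, so I obtain a strict inequality $\rho(K_{Y_{i-1}}) > \rho(K_{Y_i})$ at every step. Telescoping these strict inequalities yields part (i).

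For part (ii), I would invoke Theorem \ref{thm:thm_compare}, which places $(n_1,\ldots,n_t)$ between the tuple for $S_{n,t}$, namely $(n-t+1,1,\ldots,1)$, and the tuple for $T_{n,t}$, namely $(\lceil n/t\rceil,\ldots,\lfloor n/t\rfloor)$, in the majorization order. Combined with (i) this yields $\rho(T_{n,t}) \leq \rho(K_{n_1,\ldots,n_t}) \leq \rho(S_{n,t})$. Uniqueness is then immediate from the strict inequality in (i): if $K_{n_1,\ldots,n_t}$ is not isomorphic to $S_{n,t}$, then after sorting its tuple differs from $(n-t+1,1,\ldots,1)$, the majorization is strict, and part (i) gives $\rho(K_{n_1,\ldots,n_t}) < \rho(S_{n,t})$. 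The analogous argument handles the lower extremal graph $T_{n,t}$; note that unlike the energy case, no exceptional range in $n$ arises because the strict monotonicity in Lemma \ref{lem:pert} is unconditional.

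The principal obstacle has already been dispatched inside Lemma \ref{lem:pert}, where a Perron-type comparison of the characteristic polynomials is used to control spectral radii after a single swap. Given that lemma, the present theorem is essentially formal: the only thing to verify is that each single-step decrement along the chain from Theorem \ref{thm:maj_seq} fits the gap-of-at-least-two hypothesis of Lemma \ref{lem:pert}, which is automatic from the definition of $\mathcal{B}_t$.
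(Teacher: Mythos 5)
Your proposal is correct and follows essentially the same route as the paper: sort both tuples, use Theorem~\ref{thm:maj_seq} to produce a majorization chain of single-unit transfers, apply Lemma~\ref{lem:pert} at each step (your check that membership of consecutive tuples in $\mathcal{B}_t$ forces the gap-of-at-least-two hypothesis is a detail the paper leaves implicit), and then deduce part (ii) and uniqueness from Theorem~\ref{thm:thm_compare} together with the strictness of the inequality.
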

\begin{proof}
Without loss of generality, let us assume $n_1\geq n_2 \geq \cdots  \geq n_t $ and  $m_1\geq m_2 \geq \cdots  \geq m_t$. Let  $Y=(m_1,m_2,\cdots,m_t)$ and $X=(n_1,n_2,\cdots,n_t) $. Then  $X, Y \in \mathcal{B}_t$ and $ X \succ Y$. By Theorem~\ref{thm:maj_seq}, there exists a sequence in $\mathcal{B}_t$ such as $Y = Y_0 \succ Y_1 \succ \cdots \succ Y_l = X$ such that $ Y_i = Y_{i-1} + E_{j_i,k_i}^t$ for some $k_i > j_i, \; 1\leq i \leq l$. Inductively applying  Lemma~\ref{lem:pert} yields that  $\rho(K_{m_1,m_2,\cdots,m_t}) > \rho(K_{n_1,n_2,\cdots,n_t}).$ This proves part $(i)$.

Next, let $n =  \sum_{i=1}^t n_i$. By Theorem~\ref{thm:thm_compare}, we have 
$$
(n-t+1, \underbrace{1,\cdots,1}_{t-1}) \succeq (n_1,\cdots,n_t) \succeq \Big(\ceil*{\frac{n}{t}},\cdots,\ceil*{\frac{n}{t}},\floor*{\frac{n}{t}},\cdots,\floor*{\frac{n}{t}}\Big),
$$ 
and hence using part $(i)$ yields that $ \rho(T_{n,t}) \leq  \rho(K_{n_1,n_2,\cdots,n_t}) \leq \rho(S_{n,t}).$ 
Since the inequality in part~$(i)$ is strict,  the spectral radius $\rho(K_{n_1,n_2,\cdots,n_t})$ is maximum if $K_{n_1,n_2,\cdots,n_t}\approxeq  S_{n,t}$  and is minimum  if $K_{n_1,n_2,\cdots,n_t}\approxeq T_{n,t}.$ 
\end{proof}

Next, we show that the part $(ii)$ of the above theorem directly follows from  Lemma~\ref{lem:pert} and hence provide an alternative proof for the same.

\begin{theorem}
Let $t \ge 2$ and $n_1,n_2,\cdots,n_t$ be  positive integers and $n =  \sum_{i=1}^t n_i$. Then the following   hold.
\begin{enumerate}
\item[(i)] The spectral radius of  $\Delta(K_{n_1,n_2,\cdots,n_t})$ is maximal if $K_{n_1,n_2,\cdots,n_t}\approxeq  S_{n,t}.$

\item [(ii)] The spectral radius of  $\Delta(K_{n_1,n_2,\cdots,n_t})$ is minimal if $K_{n_1,n_2,\cdots,n_t}\approxeq T_{n,t}.$ 
\end{enumerate}
\end{theorem}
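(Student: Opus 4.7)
The plan is to give a short direct inductive proof of both extremality statements by iterated application of Lemma~\ref{lem:pert}, avoiding the majorization-theoretic detour through Theorems~\ref{thm:maj_seq} and~\ref{thm:thm_compare} that was used in the previous proof.

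For part $(i)$, I would fix $G = K_{n_1,n_2,\ldots,n_t}$ and, without loss of generality, arrange the parts so that $n_1 \ge n_2 \ge \cdots \ge n_t$. If $G \not\approxeq S_{n,t}$, then some index $q \in \{2,\ldots,t\}$ satisfies $n_q \ge 2$. I would then invoke Lemma~\ref{lem:pert} on the tuple $(n_1+1, n_2, \ldots, n_q-1, \ldots, n_t)$ with $p = 1$ and the same $q$; the required hypothesis $(n_1+1) \ge (n_q-1)+2$ is equivalent to $n_1 \ge n_q$ and hence holds by the ordering. The conclusion of the lemma then reads
$$\rho(K_{n_1+1, n_2, \ldots, n_q-1, \ldots, n_t}) > \rho(G).$$
Iterating this move (and resorting the parts after each step), the nonnegative integer quantity $\sum_{i \ge 2}(n_i - 1)$ strictly decreases by $1$ at each step and equals $0$ exactly when the graph is $S_{n,t}$. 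Hence after finitely many steps I arrive at $S_{n,t}$ with $\rho(S_{n,t}) > \rho(G)$.

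For part $(ii)$, again assume $n_1 \ge n_2 \ge \cdots \ge n_t$. If $G \not\approxeq T_{n,t}$, then $n_1 - n_t \ge 2$, since $T_{n,t}$ is characterized up to isomorphism as the unique non-increasing tuple of positive integers with prescribed sum $n$ in which consecutive entries differ by at most $1$. Lemma~\ref{lem:pert} with $p = 1$ and $q = t$ immediately delivers
$$\rho(G) > \rho(K_{n_1-1, n_2, \ldots, n_{t-1}, n_t+1}).$$
After reordering and repeating, the potential $\sum_{i=1}^t n_i^2$ decreases by $n_1^2 + n_t^2 - (n_1-1)^2 - (n_t+1)^2 = 2(n_1 - n_t - 1) \ge 2$ per step and is uniquely minimized, among sorted positive-integer tuples with sum $n$, at $T_{n,t}$. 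Therefore the iteration terminates at $T_{n,t}$, yielding $\rho(G) > \rho(T_{n,t})$.

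The main bookkeeping point, and essentially the only obstacle, is verifying termination at the correct graph. For $(i)$ this is transparent because the monovariant vanishes precisely at $S_{n,t}$. For $(ii)$ one must check both that any non-$T_{n,t}$ sorted tuple admits the gap $n_1 - n_t \ge 2$ needed to invoke Lemma~\ref{lem:pert}, and that $\sum n_i^2$ has its unique minimum over the admissible tuples at $T_{n,t}$; both facts are elementary but need to be addressed explicitly to close the induction.
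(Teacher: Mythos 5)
Your argument is correct and is essentially the proof the paper gives for this theorem: both rest entirely on Lemma~\ref{lem:pert}, the paper phrasing it as a contradiction at a global extremizer of the finite family (if the maximizer had two parts of size $\ge 2$, or the minimizer two parts differing by $\ge 2$, the lemma would improve it), while you run the same one-vertex transfer as an explicit monotone chain whose termination is certified by integer monovariants. The only caveat is that your parenthetical characterization of $T_{n,t}$ via \emph{consecutive} differences at most $1$ is inaccurate (e.g.\ $(4,3,2,1)$ satisfies it), but the property you actually invoke --- that a sorted tuple with $n_1-n_t\le 1$ must be the Tur\'an partition --- is the correct one and is what should be written.
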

\begin{proof}
Let  $K_{m_1,m_2,\cdots,m_t}$  be a  complete $t$-partite graph with $n=\sum_{i=1}^t m_i$  such that
$$\rho(K_{m_1,m_2,\cdots,m_t})=\max_{n = n_1+\cdots+n_t}  \rho(K_{n_1,n_2,\cdots,n_t}).$$
If there exist $1\leq p,q\leq t$ such that $m_p\geq m_q\geq 2$, then $(m_p+1)-(m_q-1)\geq 2$ and  by Lemma~\ref{lem:pert}, we have
$$\rho(K_{m_1,\cdots,m_p, \cdots,m_q, \cdots,m_t})
< \rho(K_{m_1,\cdots,m_p+1, \cdots,m_q-1, \cdots,m_t}),$$ 
which is a contradiction to the choice of $K_{m_1,m_2,\cdots,m_t}$. Thus, all the $m_1,m_2,\cdots, m_t$ are equal to $1$, except for one which is equal to $n-t+1$ and hence $K_{m_1,m_2,\cdots,m_t}\approxeq  S_{n,t}.$  This proves  part $(i)$.

Next, let  $K_{m_1,m_2,\cdots,m_t}$  be a  complete $t$-partite graph with $n=\sum_{i=1}^t m_i$  such that
$$\rho(K_{m_1,m_2,\cdots,m_t})=\min_{n = n_1+\cdots+n_t}  \rho(K_{n_1,n_2,\cdots,n_t}).$$ 
Then, $|m_p-m_q|\leq 1$ for all $p\neq q$, otherwise there exist $1\leq p,q\leq t$ such that $m_p -m_q \geq 2$ and hence by Lemma~\ref{lem:pert}, we have
$$\rho(K_{m_1,\cdots,m_p,\cdots,m_q,\cdots, m_t}) > \rho(K_{m_1,\cdots,m_p-1,\cdots,m_q+1,\cdots, m_t}),$$
which is a contradiction to the choice of $K_{m_1,m_2,\cdots,m_t}$. Thus, the condition $|m_p-m_q|\leq 1$ for all  $p\neq q$ implies that for all $1\leq i \leq t$ we have $m_i$ equal to $\ceil*{\dfrac{n}{t}}$ or $\floor*{\dfrac{n}{t}}$ and hence $K_{m_1,m_2,\cdots,m_t}\approxeq  T_{n,t}.$ This completes the proof.
\end{proof}

\section{Conclusion}
In this article, we have studied the squared distance matrix of complete multipartite graphs. 
We define the squared distance energy to be the sum of the modulus of the eigenvalues of the squared distance matrix. We determine the inertia of $\Delta(K_{n_1,n_2,\cdots,n_t})$ and using the result on inertia we compute the squared distance energy of $K_{n_1,n_2,\cdots,n_t}$. Furthermore, using majorization techniques we prove that for a fixed value of $n$ and $t$, both the spectral radius of the squared distance matrix and the squared distance energy of complete $t$-partite graphs on $n$ vertices are maximal for complete split graph $S_{n,t}$ and minimal for Tur{\'a}n graph $T_{n,t}$. We also discuss the uniqueness of these extremal graphs.

\section{Declarations}
\noindent{\textbf{\large Funding}}: Sumit Mohanty would like to thank the Department of Science and Technology, India, for financial support through the projects MATRICS (MTR/2017/000458).\\

\noindent{\textbf{\large Conflicts of interest/Competing interests}}: Not applicable.\\

\noindent{\textbf{\large Availability of data and material}}: Not applicable.\\

\noindent{\textbf{\large Code availability}}: Not applicable.\\

\noindent{\textbf{\large Authors' contributions}}: Both the authors have equal contribution.\\

\noindent{\textbf{\large Ethics approval}}: Not applicable.\\

\noindent{\textbf{\large Consent to participate}}: Not applicable.\\

\noindent{\textbf{\large Consent for publication}}: Not applicable.

\small{

}


\begin{thebibliography}{20}

\bibitem{Aouchiche} M. Aouchiche and   P. Hansen, Distance spectra of graphs: a survey, Linear Algebra Appl., 458 (2014)  301-386.

\bibitem{Bp3}  R.B. Bapat and  S. Sivasubramanian,
Inverse of the distance matrix of a block graph, Linear  Multilinear Algebra,  59(12) (2011) 1393-1397. 

\bibitem{Bp4} R.B. Bapat and S. Sivasubramanian, Product distance matrix of a graph and squared distance matrix of a tree, Appl. Anal. Discrete Math., 7(2) (2013) 285–301.

\bibitem{Bp5} R.B. Bapat and  S. Sivasubramanian,
Squared distance matrix of a tree: Inverse and inertia, Linear Algebra Appl.,  491 (2016)  328-342.

\bibitem{Bp6} R.B. Bapat. Squared distance matrix of a weighted tree, Electron. J. Graph Theory Appl., 7(2) (2019) 301–313.





\bibitem{JD1}J. Das and  S. Mohanty, Distance Matrix of  Multi-block Graphs: Determinant and Inverse, Linear  Multilinear Algebra, (2020). \href{https://doi.org/10.1080/03081087.2020.1860886}{https://doi.org/10.1080/03081087.2020.1860886}



\bibitem{Indulal} G. Indulal, I. Gutman and A. Vijayakumar, On distance energy of graphs, MATCH Commun. Math. Comput. Chem., 60 (2008)   461-472.


\bibitem{Horn} R.A. Horn and  C.R. Johnson. Matrix analysis  (Corrected reprint of the 1985 original). Cambridge University Press, Cambridge, (1990). 

\bibitem{Ob} M.R. Oboudi, Majorization and the spectral radius of starlike trees, J. Comb. Optim. 36 (2018)  121–129.

\bibitem{Ob1} M.R. Oboudi. Distance spectral radius of complete multipartite graphs and majorization, Linear Algebra Appl.,  583 (2019) 134-145.  

\bibitem{Ob2} M.R. Oboudi, A relation between the signless Laplacian spectral radius of complete multipartite graphs and majorization, Linear Algebra Appl., 565 (2019)   225–238.       

\bibitem{Gr1}  R.L. Graham and  H.O. Pollak,    
On the addressing problem for loop switching.  Bell System Tech. J., 50 (1971)  2495-2519.

\bibitem{Gr3} R. L. Graham, A. J. Hoffman and H. Hosoya, On the distance matrix of a directed graph, J. Graph Theory, 1 (1977)  85-88.

\bibitem{Gr2} R.L. Graham and  L. Lov\'asz,  
Distance matrix polynomials of trees, Adv. in Math.,  29(1)(1978) 60-88.

\bibitem{Hou1} Y. Hou and J. Chen, 
Inverse of the distance matrix of a cactoid digraph, Linear Algebra Appl., 475 (2015) 1-10.

\bibitem{Hou2} Y. Hou, A. Fang and Y. Sun,
Inverse of the distance matrix of a cycle-clique graph, Linear Algebra Appl., 485 (2015)  33-46.

\bibitem{Sun}S. Sun and  K.C. Das,  Proof of a conjecture on distance energy change of complete multipartite graph due to edge deletion. Linear Algebra Appl. 611 (2021), 253-259.
 
\bibitem{So} W. So, A shorter proof of the distance energy of complete multipartite graphs, Spec. Matrices, 5 (2017) 61-63.

\bibitem{Stev} D. Stevanović, M. Milošević, ; P. Híc, and  M. Pokorný, Proof of a conjecture on distance energy of complete multipartite graphs. MATCH Commun. Math. Comput. Chem., 70(1) (2013),  157-162. 
 
\bibitem{Var}A. Varghese, W. So and A. Vijayakumar, Distance energy change of complete bipartite graph due to edge deletion. Linear Algebra Appl. 553 (2018), 211-222.

\bibitem{Zhang-x} X. Zhang, The inertia and energy of distance matrices of complete $k$-partite graphs, Linear Algebra Appl., 450 (2014)  108-120.

\bibitem{Zhou1} H. Zhou, The inverse of the distance matrix of a distance well-defined graph,
Linear Algebra Appl.,  517 (2017) 11-29. 


\end{thebibliography}
\end{document}